\newcommand{\C}{\mathbb{C}}
\newcommand{\R}{\mathbb{R}}
\newcommand{\g}{\mathfrak{g}}
\newcommand{\lt}{\mathfrak{t}}
\newcommand{\bdm}{\begin{displaymath}}
\newcommand{\edm}{\end{displaymath}}
\theoremstyle{definition}
\newtheorem{thm}{Theorem}
\newtheorem{lem}[thm]{Lemma}
\newtheorem{defn}{Definition}
\newtheorem{prop}[thm]{Proposition}
\newtheorem{rem}{Remark}
\newtheorem{eg}{Example}
\newtheorem{cor}{Corollary}
\title[]{Harmonic spheres in outer symmetric spaces, their canonical elements and Weierstrass-type representations}
\author{N. Correia and R. Pacheco}
\address{Universidade da Beira Interior\\
Rua Marqu\^{e}s d'\'{A}vila e Bolama, 6200-001 Covilh\~{a}, Portugal}
\email{ncorreia@ubi.pt, rpacheco@ubi.pt}
\begin{document}

\begin{abstract}
 Making use of Murakami's classification of outer involutions in a Lie algebra and following the Morse-theoretic approach to harmonic two-spheres in Lie groups introduced by Burstall and Guest, we
obtain a new classification of harmonic two-spheres in outer symmetric spaces and a  Weierstrass-type representation for such maps. Several examples of harmonic maps into classical outer symmetric spaces are given in terms of meromorphic functions on $S^2$.
\end{abstract}

\maketitle

  \section{Introduction}\label{introd}

  The  harmonicity of  maps $\varphi$ from a Riemann surface $M$ into a compact Lie group $G$ with identity $e$ amounts to the flatness of  one-parameter families of connections. This establishes a correspondence between such maps and certain holomorphic maps $\Phi$ into the based loop group $\Omega G$, the \emph{extended solutions} \cite{Uh}. Evaluating an extended solution $\Phi$ at $\lambda=-1$ we obtain a harmonic map $\varphi$ into the Lie group. If
  an extended solution takes values in  the group of algebraic loops $\Omega_\mathrm{alg}{G}$,
 the corresponding harmonic map is said
to have \textit{finite uniton number}. It is well known that all harmonic maps from the two-sphere into a compact Lie group have finite uniton number \cite{Uh}.

Burstall and Guest \cite{BG} have used a method suggested by Morse theory in order to describe
harmonic maps with finite uniton number from $M$ into a compact Lie group $G$ with trivial centre.  One of the  main ingredients in that paper is the Bruhat decomposition
of the group of algebraic loops $\Omega_\mathrm{alg}{G}$.
  Each piece $U_\xi$ of the Bruhat decomposition  corresponds to an element $\xi$ in the integer lattice $\mathfrak{I}(G)=(2\pi)^{-1} \exp^{-1}(e)\cap \mathfrak{t}$  and can be described as the unstable manifold  of  the energy flow on the K\"{a}hler manifold $\Omega_\mathrm{alg}{G}$. Each extended solution $\Phi:M\to \Omega_\mathrm{alg}{G}$ takes values, off some discrete subset $D$ of $M$, in one of these unstable manifolds $U_\xi$ and can be deformed, under the gradient flow of the energy, to an extended solution with values in some conjugacy class of a Lie group homomorphism $\gamma_\xi:S^1\to G$. A normalization procedure allows us to choose $\xi$  among the  \emph{canonical elements} of $\mathfrak{I}(G)$;
there are precisely $2^n$ canonical elements, where $n=\mathrm{rank}(G)$, and consequently $2^n$ classes of harmonic maps. Burstall and Guest \cite{BG} introduced also a Weierstrass-type representation  for such harmonic maps in terms of meromorphic functions on $M$.
It is possible to define a similar notion of canonical element for
compact Lie groups $G$ with non-trivial centre  \cite{correia_pacheco_4,correia_pacheco_5}. In the present paper, we will not assume any restriction on the centre of $G$.

Given an involution $\sigma$ of $G$, the
 compact symmetric $G$-space $N=G/G^\sigma$, where $G^\sigma$ is the subgroup of $G$ fixed by $\sigma$, can be embedded totally geodesically in $G$ via the corresponding Cartan embedding $\iota_\sigma$. Hence harmonic maps into compact symmetric spaces can be interpreted as special harmonic maps into Lie groups. For inner involutions $\sigma=\mathrm{Ad}(s_0)$, where $s_0\in G$ is the geodesic
 reflection at some base point $x_0\in N$, the composition of the Cartan embedding with left multiplication by $s_0$ gives a totally geodesic embedding of $G/G^\sigma$ in $G$ as a connected component of $\sqrt{e}$. Reciprocally, any connected component of $\sqrt{e}$ is a compact inner symmetric $G$-space.
As shown by Burstall and Guest \cite{BG}, any harmonic map into
 a connected component of $\sqrt{e}$ admits an extended solution $\Phi$ which is invariant under the involution $I(\Phi)(\lambda)=\Phi(-\lambda)\Phi(-1)^{-1}$. Off a discrete set, $\Phi$ takes values in  some unstable manifold $U_\xi$ and  can be deformed, under the gradient flow of the energy, to an extended solution with values in some conjugacy class of a Lie group homomorphism $\gamma_\xi:S^1\to G^\sigma$. An appropriate normalization procedure which preserves both $I$-invariance and the underlying connected component of $\sqrt{e}$ allows us  to choose $\xi$  among the canonical elements of $\mathfrak{I}(G^\sigma)$. As a matter of fact, since $\sigma$ is inner, $\mathrm{rank}(G)=\mathrm{rank}(G^\sigma)$ and we have $\mathfrak{I}(G)=\mathfrak{I}(G^\sigma)$, that is the canonical elements of $\mathfrak{I}(G)$ coincide with the canonical elements of $\mathfrak{I}(G^\sigma)$.
 Consequently, if $G$ has trivial center, we have $2^n$ classes of harmonic maps with finite uniton number into \emph{all} inner symmetric $G$-spaces.

The theory of Burstall and Guest \cite{BG} on harmonic two-spheres in compact inner symmetric $G$-spaces was extended by Eschenburg, Mare and Quast \cite{EMQ} to outer symmetric spaces as follows:  each harmonic map from a two-sphere into an outer symmetric space $G/G^\sigma$, with outer involution $\sigma$, corresponds to an extended solution $\Phi$ which is invariant under a certain involution $T_\sigma$ induced by $\sigma$ on $\Omega G$ (see also \cite{GO}); $\Phi$ takes values in some unstable manifold $U_\xi$, off some discrete set; under the gradient flow of the energy any such invariant extended solution is deformed to an extended solution with values in some conjugacy class of a Lie group homomorphism $\gamma_\xi:S^1\to G^\sigma$;   applying the normalization procedure of extended solutions introduced by Burstall and Guest for Lie groups, $\xi$ can be chosen among the canonical elements of $\mathfrak{I}(G^\sigma)\subsetneq \mathfrak{I}(G)$; if $G$ has trivial centre,
there are precisely $2^k$ canonical homorphisms, where $k=\mathrm{rank}(G^\sigma)<\mathrm{rank}(G)$; hence  there are \emph{at most} $2^k$ classes of harmonic two-spheres in $G/G^\sigma$ if $G$ has trivial centre.

 In the present paper, we will establish a more accurate classification of harmonic maps from a two-sphere into compact outer symmetric spaces. This classification takes  into consideration the following crucial facts concerning extended solutions associated to harmonic maps into outer symmetric spaces:
 although any harmonic map from a two-sphere into an outer symmetric space $G/G^\sigma$ admits a $T_\sigma$-invariant extended solution, not all $T_\sigma$-invariant extended solutions correspond to harmonic maps into  $G/G^\sigma$; on the other hand, the Burstall and Guest's normalization procedure  does not necessarily preserve $T_\sigma$-invariance.

Our
 strategy is the following. The existence of outer involutions of a simple Lie algebra $\mathfrak{g}$ depends on the existence of non-trivial involutions of the Dynkin diagram of $\mathfrak{g}^\C$ \cite{BR,EMQ,He,Mu}. More precisely, if $\varrho$ is a  non-trivial involution of the Dynkin diagram of $\mathfrak{g}^\C$, then it induces  an outer involution $\sigma_\varrho$ of  $\mathfrak{g}^\C$, which we call the \emph{fundamental outer involution},  and, as shown by Murakami \cite{Mu}, all the other outer involutions are, up to conjugation, of the form $\sigma_{\varrho,i}:=\mathrm{Ad}\exp\pi\zeta_i\circ \sigma_\varrho$ where each $\zeta_i$ is a certain element in the integer lattice $\mathfrak{I}(G^{\sigma_\varrho})$. Each connected component of $P^{\sigma_\varrho}=\{g\in G| \,\sigma_\varrho(g)=g^{-1}\}$ is a compact outer symmetric $G$-space associated to some involution $\sigma_\varrho$ or $\sigma_{\varrho,i}$; reciprocally, any outer symmetric space $G/G^\sigma$, with $\sigma$ equal to $\sigma_\varrho$ or $\sigma_{\varrho,i}$, can be totally geodesically embedded in the Lie group $G$ as a connected component of $P^{\sigma_\varrho}$ (see Proposition \ref{concomp}).
As shown in Section \ref{basics}, any harmonic map $\varphi$ into a connected component $N$ of $P^{\sigma_\varrho}$ admits a $T_{\sigma_\varrho}$-invariant extended solution $\Phi$;
 off a discrete set, $\Phi$ takes values in  some unstable manifold $U_\xi$.
In Section \ref{normprocedure} we introduce an appropriate normalization procedure  in order to obtain from $\Phi$ a \emph{normalized} extended solution $\tilde{\Phi}$ with values in some unstable manifold $U_\zeta$ such that: $\zeta$ is a
canonical element of $\mathfrak{I}(G^{\sigma_\varrho})$; $\tilde{\Phi}$ is $T_\tau$-invariant, where $\tau$ is the outer involution given by $\tau=\mathrm{Ad}\exp\pi(\xi-\zeta)\circ \sigma_{\varrho}$;  $\tilde{\Phi}(-1)$ takes values  in some connected component of $P^{\sigma_\varrho}$ which  is an isometric copy of $N$ completely determined by $\zeta$ and $\tau$; moreover, $\tilde{\Phi}(-1)$ coincides with $\varphi$ up to isometry.
 Hence, we obtain a classification of harmonic maps of finite uniton number from $M$ into outer symmetric $G$-spaces in terms of the pairs $(\zeta,\tau)$.

 Dorfmeister, Pedit and Wu \cite{DPW} have introduced a general scheme for constructing harmonic maps from a Riemann surface into a compact symmetric space from holomorphic data, in which the harmonic map equation reduces to a linear ODE similar to the classical Weierstrass representation of minimal surfaces. Burstal and Guest \cite{BG} made this scheme more explicit for the case $M=S^2$ by establishing a ``Weierstrass formula" for harmonic maps with finite uniton number into Lie groups and their inner symmetric spaces.
 In Theorem \ref{sigmaweirstrass} we establish a version of this formula to  outer symmetric  spaces, which  allows us to describe the corresponding $T_\sigma$-invariant extended solutions in terms of meromorphic functions on $M$. For normalized extended solutions and ``low uniton number", such descriptions are easier to obtain. In Section \ref{examples} we give several explicit examples of harmonic maps from the two-sphere into classical outer symmetric spaces:  Theorem \ref{RAs} interprets old results by Calabi \cite{calabi_1967} and Eells and Wood \cite{eells_wood_1983} concerning harmonic spheres in real projective spaces $\mathbb{R}P^{2n-1}$ in view of our classification; harmonic two-spheres into the real Grassmannian $G_3(\mathbb{R}^6)$ are studied in detail; we show that all harmonic two spheres into the \emph{Wu manifold} $SU(3)/SO(3)$ can be obtained explicitly by choosing two meromorphic functions on $S^2$ and then performing a finite number of algebraic operations, in agreement with the explicit constructions established by  H. Ma in \cite{Ma}.

 \section{Groups of algebraic loops}

For completeness, in this section we recall some fundamental facts concerning the structure of the group of algebraic loops in a compact Lie group. Further details can be found in \cite{BG,correia_pacheco_3,PS}.

%\subsection{The Bruhat decomposition}
Let $G$ be a compact matrix semisimple Lie group with Lie algebra $\mathfrak g$  and identity $e$. Denote the \emph{free} and \emph{based} loop groups of $G$ by $\Lambda G$  and $\Omega G$, respectively, whereas $\Lambda_+ G^{\mathbb{C}}$ stands for the subgroup of $\Lambda G^\C$ consisting of loops $\gamma:S^1\to G^\C$ which extend holomorphically to the unitary disc $|\lambda|<1$.

%Taking into consideration the \emph{Iwasawa decomposition}   $\Lambda G^{\mathbb{C}}\cong \Omega G\times \Lambda_+G^\mathbb{C}$,  each $\gamma\in  \Lambda
%G^\mathbb{C}$ can be written uniquely in the form $\gamma=\gamma_R\gamma_+$, with $\gamma_R\in \Omega G$ and $\gamma_+\in \Lambda_+G^\mathbb{C}$. Consequently, there exists a
%\emph{dressing action}  of $\Lambda_+ G$ on
%$\Omega G$: if $g\in\Omega G$ and
%$h\in \Lambda_+ G$, then $h\cdot g=(hg)_R.$

Fix a maximal torus $T$ of $G$ with Lie algebra $\mathfrak{t}\subset \mathfrak{g}$. Let $\Delta\subset \mathrm{i}\mathfrak{t}^*$ be the corresponding
set of roots, where $\mathrm{i}:=\sqrt{-1}$,  and, for each $\alpha\in \Delta$, denote by $\g_\alpha$ the corresponding root space.
Choose a fundamental Weyl  chamber $\mathcal{W}$  in $\mathfrak{t}$, which  corresponds to fix a positive root system $\Delta^+$.  The intersection  $\mathfrak{I}'(G):=\mathfrak{I}(G)\cap\mathcal{W}$ parameterizes the conjugacy classes of homomorphisms $S^1\to G$. More precisely, $\mathrm{Hom}(S^1,G)$ is the
disjoint union of $\Omega_\xi(G)$, with $\xi\in \mathfrak{I}'(G)$,
where   $\Omega_\xi(G)$ is the conjugacy class of homomorphisms  which contains $\gamma_\xi(\lambda)=\exp{(-\mathrm{i}\ln(\lambda)\xi)}$.

 The \emph{Bruhat decomposition } states that the subgroup of algebraic based loops $\Omega_\mathrm{alg}G$ is the disjoint union of the orbits $\Lambda^+_\mathrm{alg}G^\C \cdot\gamma_\xi$, with $\xi\in \mathfrak{I}'(G)$, where $\cdot$ denotes the dressing action of $\Lambda_+G^\mathbb{C}$ on $\Omega G$ induced by the \emph{Iwasawa decomposition}   $\Lambda G^{\mathbb{C}}\cong \Omega G\times \Lambda_+G^\mathbb{C}$. According to the Morse theoretic interpretation \cite{BG,PS} of the Bruhat decomposition, for each $\xi\in\mathfrak{I}'(G)$,
 $U_\xi(G):=\Lambda^+_\mathrm{alg}G^\C\cdot \gamma_\xi$ is  the unstable manifold of $\Omega_\xi(G)$ under the flow induced by  the energy gradient vector field $-\nabla E$: each $\gamma\in U_\xi(G)$ flows to some homomorphism $u_\xi(\gamma)$ in $\Omega_\xi(G)$.

%Each $\xi\in \mathfrak{I}(G)$ endows $\mathfrak{g}^\C$ with a structure of a graded Lie algebra:  for each  $j\in\mathbb{Z}$, let , which is given by the direct sum of those root spaces $\g_\alpha$ satisfying $\alpha(\xi)=j\mathrm{i}$; then
%\begin{equation*}
%\g^\C=\!\!\!\!\!\bigoplus_{j\in\{-r(\xi)\ldots,r(\xi)\}}\!\!\!\!\!\mathfrak{g}^\xi_j,\quad [\mathfrak{g}^\xi_i, \mathfrak{g}^\xi_j]\subset \mathfrak{g}^\xi_{i+j},
%\end{equation*}
%where
%$r(\xi)=\mathrm{max}\{j\,|\,\,\g_j^\xi\neq 0\}$.

In \cite{BG}, the authors proved that, for  each $\xi\in\mathfrak{I}'(G)$, the critical manifold $\Omega_\xi(G)$ is  a complex homogeneous space of $G^\C$ and the unstable manifold $U_\xi(G)$ is a complex homogeneous space of the group $\Lambda^+_{\mathrm{alg}}G^\C$.
 Moreover,  $U_\xi(G)$ carries a structure of holomorphic vector bundle over $\Omega_\xi(G)$ and the bundle map
 $u_\xi:U_\xi(G)\to \Omega_\xi(G)$ is precisely the natural projection given by  $[\gamma]\mapsto [\gamma(0)]$.

Define a partial order $\preceq$ over $\mathfrak{I}'(G)$ as follows: $\xi\preceq \xi'$ if $\mathfrak{p}^{\xi}_i\subset \mathfrak{p}^{\xi'}_i$
for all $i\geq 0$, where $\mathfrak{p}_i^\xi=\sum_{j\leq i}\g_j^\xi$ and $\g_j^\xi$ is the $j\mathrm{i}$-eigenspace of $\mathrm{ad}{\xi}$. As shown in \cite{correia_pacheco_3},
 one can  define a $\Lambda^+_{\mathrm{alg}}G^\C$-invariant fibre bundle morphism
$\mathcal{U}_{\xi,\xi'}:U_\xi(G)\to U_{\xi'}(G)$  by
\begin{equation*}
\mathcal{U}_{\xi,\xi'}(\Psi\cdot \gamma_{\xi})=\Psi\cdot\gamma_{\xi'}, \quad \Psi\in\Lambda^+_{\mathrm{alg}}G^\C,\end{equation*}
whenever  $\xi\preceq \xi'$.
Since the holomorphic structures on  $U_\xi(G)$ and $U_{\xi'}(G)$ are induced by the holomorphic structure on $\Lambda^+_{\mathrm{alg}}G^\C$, the fibre-bundle morphism  $\mathcal{U}_{\xi,\xi'}$ is holomorphic.

\section{Harmonic spheres in Lie groups}

Harmonic maps from the two-sphere $S^2$ into a compact matrix Lie group $G$ can be classified in terms of certain pieces of the Bruhat decomposition of $\Omega_\mathrm{alg}G$. Next we recall briefly this theory from \cite{BG,correia_pacheco_3,correia_pacheco_4,correia_pacheco_5}.

\subsection{Extended Solutions}
Let  $M$ be a simply-connected Riemann surface, $\varphi:M\rightarrow G$ be a smooth map and $\rho:G\to \mathrm{End}(V)$ a finite representation of $G$. Equip $G$ with a bi-invariant metric.
If $\varphi$ is an harmonic map of \emph{finite uniton number},  it admits an extended solution $\Phi:M\to \Omega G$ with $\Phi(M)\subseteq \Omega_{\mathrm{alg}}G$ and  $\varphi=\Phi_{-1}$. In this case, we can write
$\rho\circ\Phi=\sum_{i=r}^s\zeta_i\lambda^i$ for some $r\leq s\in\mathbb{Z}$.  The number $s-r$ is called the \emph{uniton number} of $\Phi$ with respect to $\rho$, and the minimal value of $s-r$ (with respect to all extended solutions associated to $\varphi$) is called the \emph{uniton number} of $\varphi$ with respect to $\rho$ and it is denoted by $r_\rho(\varphi)$.
As explained in \cite{correia_pacheco_5}, this definition of  uniton number of an extended solution with respect to the adjoint representation is twice that of Burstall and Guest \cite{BG}.  K. Uhlenbeck \cite{Uh} proved that all harmonic maps from the two-sphere have finite uniton number. For simplicity of exposition, henceforth we will take $M=S^2$. However, all our results still hold for harmonic maps of finite uniton number from an arbitrary Riemann surface.

%Define $\alpha=\varphi^{-1}{d}\varphi$ and let $\alpha=\alpha'+\alpha''$
%be the type decomposition of $\alpha$ into $(1,0)$ and
%$(0,1)$-forms. As  first observed by K. Uhlenbeck \cite{Uh},
% $\varphi:M\rightarrow G $ is harmonic if and only if $d+\alpha_\lambda$, with
%$\alpha_\lambda=\frac12(1-\lambda^{-1})\alpha'+\frac12(1-\lambda)\alpha''$, is a $S^1$-family of flat connections.
%Then, if  $\varphi$ is harmonic and $M$ is simply connected, we can
%integrate to obtain a map $\Phi:M\rightarrow \Omega G$, the \textit{extended solution} associated to $\varphi$, such that
%$\alpha_\lambda=\Phi_\lambda^{-1}{d}\Phi_\lambda$ and $\Phi_{-1}=\varphi$. If $\tilde\Phi=\gamma\Phi$ for some $\gamma\in \Omega G$, we say that the extended solutions $\tilde \Phi$ and $\Phi$ are \emph{equivalent}.
%
%
%
%
%An

\begin{thm}\cite{BG}\label{usd}
 {Let $\Phi:S^2\to \Omega_{\mathrm{alg}}G$ be an extended solution. Then there exists some $\xi\in \mathfrak{I}'(G)$, and some discrete subset $D$ of $S^2$, such that $\Phi(S^2\setminus D)\subseteq U_\xi(G)$.}
  \end{thm}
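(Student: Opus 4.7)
The plan is to combine the Bruhat stratification of $\Omega_{\mathrm{alg}}G$ with the holomorphicity of the extended solution $\Phi$. I would first reduce to a finite-dimensional setting: since $\Phi$ is algebraic, composing with a faithful representation $\rho$ gives $\rho\circ\Phi=\sum_{i=r}^{s}\zeta_i\lambda^{i}$ for fixed integers $r\leq s$, so $\Phi(S^2)$ lies inside a finite-dimensional complex subvariety of $\Omega_{\mathrm{alg}}G$ (polynomial loops of bounded ``degree''). Only finitely many strata $U_\xi(G)$ can meet this piece, say $U_{\xi_1}(G),\ldots,U_{\xi_m}(G)$.

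This yields a finite disjoint decomposition $S^2=\bigsqcup_{j=1}^{m}\Phi^{-1}(U_{\xi_j}(G))$ into locally closed subsets. I would then single out a ``generic'' stratum: by the Baire category theorem some $\Phi^{-1}(U_{\xi_{j_0}}(G))$ has nonempty interior, and connectedness of $S^2$ makes this index unique; set $\xi:=\xi_{j_0}$.

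Finally I would argue that the complement is discrete. Since $\overline{U_\xi}(G)$ is a closed complex analytic subvariety of the ambient finite-dimensional piece and $\Phi$ is holomorphic, $\Phi^{-1}(\overline{U_\xi}(G))$ is a closed complex analytic subset of $S^2$ containing a nonempty open set. Irreducibility of $S^2$ then forces $\Phi(S^2)\subseteq \overline{U_\xi}(G)$, and $D:=\Phi^{-1}(\overline{U_\xi}(G)\setminus U_\xi(G))$ is a proper closed analytic subset of $S^2$, hence discrete, with $\Phi(S^2\setminus D)\subseteq U_\xi(G)$ as claimed.

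The hard part is structural rather than logical: one must invoke the loop-group analogues of the Schubert picture, specifically that each $U_\xi(G)$ is a locally closed complex submanifold whose closure is a complex analytic subvariety of any finite-dimensional polynomial-loop piece, and that only finitely many strata meet such a piece. These facts are drawn from the Pressley-Segal theory; once granted, the argument reduces to the familiar trio of Baire category, irreducibility of $S^2$, and analyticity of preimages under holomorphic maps.
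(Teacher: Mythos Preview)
The paper does not supply its own proof of this result; it is quoted directly from Burstall--Guest \cite{BG}. Your outline is essentially the standard argument and is correct: bounded polynomial degree reduces to a finite-dimensional piece meeting only finitely many Bruhat cells, Baire category yields a cell $U_\xi$ whose preimage has nonempty interior, holomorphicity and irreducibility of $S^2$ force $\Phi(S^2)\subseteq\overline{U_\xi}$, and the boundary $\overline{U_\xi}\setminus U_\xi$ (a union of lower cells, hence a closed subvariety) pulls back to a proper analytic, therefore discrete, subset $D$.

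One minor point: your claim that ``connectedness of $S^2$ makes this index unique'' is not justified by connectedness alone---a connected space can certainly be a union of two locally closed pieces each with nonempty interior. Uniqueness is not needed for the theorem as stated; if you want it, it follows \emph{a posteriori} from your irreducibility step together with the fact that distinct Bruhat cells have distinct closures (if two cells $U_\xi,U_{\xi'}$ both had preimages with nonempty interior, your argument would give $\Phi(S^2)\subseteq\overline{U_\xi}\cap\overline{U_{\xi'}}$, forcing $\overline{U_\xi}=\overline{U_{\xi'}}$ and hence $\xi=\xi'$).
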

%This is a consequence of the holomorphicity of $\Phi$. As a matter of fact, we have the following.
%\begin{thm}\cite{correia_pacheco_3} A smooth map $\Phi:M\setminus D\to U_\xi$ is an extended solution if, and only if, $\Phi$ is holomorphic and pseudo-horizontal
%  (that is, the derivative of $\Phi$ along $1,0$-direction takes values in $H^{1,0}U_\xi$).
%\end{thm}

Given a smooth map $\Phi:S^2\setminus D\to U_\xi(G)$, consider $\Psi:S^2\setminus D \to \Lambda_{\mathrm{alg}}^+G^\C$ such that $\Phi =\Psi\cdot\gamma_\xi $, that is
 $\Psi\gamma_\xi=\Phi b$ for some $b:S^2\setminus D\to \Lambda^+_{\mathrm{alg}}G^\C.$
Write
$\Psi^{-1}\Psi_z=\sum_{i\geq 0} X'_i\lambda^i$, and $\Psi^{-1}\Psi_{\bar{z}}=\sum_{i\geq 0} X''_i\lambda^i.$
Proposition 4.4 in  \cite{BG} establishes that   $\Phi$ is an extended solution if, and only if,
\begin{equation}\label{im}
\mathrm{Im} X'_i\subset \,\mathfrak{p}^\xi_{i+1},\,\,\,\,\mathrm{Im} X''_i\subset \mathfrak{p}^\xi_{i},
\end{equation}
where $\mathfrak{p}_i^\xi=\bigoplus_{j\leq i}\g_j^\xi$ and $\g_j^\xi$ is the $j\mathrm{i}$-eigenspace of $\mathrm{ad}{\xi}$.  The derivative of the harmonic map $\varphi=\Phi_{-1}$ is given by the following formula.
\begin{lem}\label{poi}\cite{correia_pacheco_3}
  Let  $\Phi=\Psi\cdot\gamma_\xi:S^2\to \Omega_{\mathrm{alg}}G$ be an extended solution and  $\varphi=\Phi_{-1}:S^2\to G$ the corresponding harmonic map. Then
  $$\varphi^{-1}\varphi_z=-2\sum_{i\geq 0}b(0){X'_i}^{i+1}b(0)^{-1},$$
  where ${X'_i}^{i+1}$ is the component of ${X'_i}$ over $\g^\xi_{i+1}$, with respect to the decomposition $\g^\C=\bigoplus \g^\xi_j$.
\end{lem}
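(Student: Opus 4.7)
The plan is to compute $\Phi^{-1}\Phi_z$ as a Laurent polynomial in $\lambda$ and then extract the $\lambda^{-1}$ coefficient in two different ways.

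First I would use $\Phi b = \Psi\gamma_\xi$, or equivalently $\Phi = \Psi\gamma_\xi b^{-1}$, to obtain
\begin{equation*}
\Phi^{-1}\Phi_z \;=\; b\,\mathrm{Ad}(\gamma_\xi^{-1})(\Psi^{-1}\Psi_z)\,b^{-1} \;-\; b_z b^{-1},
\end{equation*}
using that $\gamma_\xi$ does not depend on $z$. Then, decomposing each $X'_i = \sum_j {X'_i}^{j}$ with ${X'_i}^{j}\in\g^\xi_j$, the action $\mathrm{Ad}(\gamma_\xi^{-1})$ multiplies ${X'_i}^{j}$ by $\lambda^{-j}$, so
\begin{equation*}
\mathrm{Ad}(\gamma_\xi^{-1})(\Psi^{-1}\Psi_z) \;=\; \sum_{i\geq 0}\sum_{j\leq i+1}\lambda^{i-j}\,{X'_i}^{j},
\end{equation*}
where the bound $j\leq i+1$ uses the extended-solution condition (\ref{im}). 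In particular the lowest power of $\lambda$ appearing is $\lambda^{-1}$, achieved exactly when $j=i+1$.

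Next I would invoke the characterization of extended solutions: $\Phi$ being an extended solution is equivalent to $\Phi^{-1}\Phi_z = (1-\lambda^{-1})A'_z$ with $A'_z = \tfrac{1}{2}\varphi^{-1}\varphi_z$ (the $(1,0)$-part of $\tfrac12\varphi^{-1}d\varphi$, evaluated at the harmonic map $\varphi=\Phi_{-1}$). Comparing the coefficient of $\lambda^{-1}$ on both sides then yields the identity, since on the one hand the right-hand side contributes $-A'_z=-\tfrac12\varphi^{-1}\varphi_z$, and on the other hand $b_z b^{-1}$ is in $\Lambda^+_{\mathrm{alg}}\g^{\mathbb C}$ and so contributes nothing to $\lambda^{-1}$, while for the conjugation $b(\cdot)b^{-1}$ only the constant term $b(0)$ (resp.\ $b(0)^{-1}$) of $b$ (resp.\ $b^{-1}$) can combine with a $\lambda^{-1}$-term of the middle factor to produce a $\lambda^{-1}$ contribution.

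Putting these together gives
\begin{equation*}
-\tfrac12\varphi^{-1}\varphi_z \;=\; b(0)\sum_{i\geq 0}{X'_i}^{i+1}b(0)^{-1},
\end{equation*}
which is exactly the claimed formula after moving the $-2$ to the other side. The only delicate point is keeping track of normalizations (the factor $2$ and the sign), i.e.\ fixing once and for all the convention $\Phi^{-1}\Phi_z=(1-\lambda^{-1})A'_z$ with $A'_z=\tfrac12\varphi^{-1}\varphi_z$; once that is pinned down, the rest of the argument is a direct comparison of Laurent coefficients.
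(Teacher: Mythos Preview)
The paper does not actually prove this lemma; it simply cites it from \cite{correia_pacheco_3}. So there is no ``paper's own proof'' to compare against. That said, your argument is correct and is exactly the standard proof one finds in the cited reference: compute $\Phi^{-1}\Phi_z$ from the factorisation $\Phi=\Psi\gamma_\xi b^{-1}$, use that $\mathrm{Ad}(\gamma_\xi^{-1})$ acts on $\g^\xi_j$ by $\lambda^{-j}$ together with the constraint \eqref{im} to see that only $\lambda^{-1}$ appears as a negative power, and then match the $\lambda^{-1}$ coefficient against the extended-solution form $\Phi^{-1}\Phi_z=(1-\lambda^{-1})\tfrac12\varphi^{-1}\varphi_z$. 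Your handling of the conjugation by $b$ (only $b(0)$ survives at order $\lambda^{-1}$) and of the $b_zb^{-1}$ term (no negative powers) is precisely what is needed.
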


Both the fiber bundle morphism $\mathcal{U}_{\xi,\xi'}:U_\xi(G)\to U_{\xi'}(G)$ and  the bundle map $u_\xi:U_\xi(G)\to \Omega_\xi(G)$ preserve harmonicity.
\begin{prop}\cite{BG,correia_pacheco_3} \label{popo}
Let $\Phi:S^2\setminus D\to U_\xi(G)$  be an extended solution. Then
\begin{enumerate}
  \item[a)]  $u_\xi\circ\Phi:S^2\setminus D\to \Omega_\xi(G)$ is an extended solution, with $\xi\in\mathfrak{I}(G)$;
  \item[b)] for each $\xi'\in \mathfrak{I}'(G)$ such that $\xi\preceq \xi'$,
$\mathcal{U}_{\xi,\xi'}(\Phi)=\mathcal{U}_{\xi,\xi'}\circ \Phi:S^2\setminus D\to U_{\xi'}(G)$ is an extended solution.
  \end{enumerate}
\end{prop}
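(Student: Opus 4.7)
My plan is to reduce both claims to verifications of the characterization (\ref{im}) of extended solutions, using essentially the same $\Psi$ in each case. I would begin by writing $\Phi=\Psi\cdot\gamma_\xi$ with $\Psi:S^2\setminus D\to \Lambda_{\mathrm{alg}}^+G^\C$ and expanding $\Psi^{-1}\Psi_z=\sum_{i\geq 0}X'_i\lambda^i$, $\Psi^{-1}\Psi_{\bar z}=\sum_{i\geq 0}X''_i\lambda^i$, so that by hypothesis $\mathrm{Im}\,X'_i\subset \mathfrak{p}^\xi_{i+1}$ and $\mathrm{Im}\,X''_i\subset \mathfrak{p}^\xi_i$ for every $i\geq 0$.

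For part (b), the definition of the fibre-bundle morphism gives $\mathcal{U}_{\xi,\xi'}(\Phi)=\Psi\cdot\gamma_{\xi'}$ with the same $\Psi$, so I would simply observe that the coefficients $X'_i, X''_i$ are unchanged. Since the partial order $\xi\preceq\xi'$ is defined precisely by the inclusions $\mathfrak{p}^\xi_i\subset \mathfrak{p}^{\xi'}_i$ for all $i\geq 0$, the conditions $\mathrm{Im}\,X'_i\subset \mathfrak{p}^{\xi'}_{i+1}$ and $\mathrm{Im}\,X''_i\subset \mathfrak{p}^{\xi'}_i$ follow automatically, and (\ref{im}) is verified for $\mathcal{U}_{\xi,\xi'}(\Phi)$ at level $\xi'$.

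For part (a), I would identify $u_\xi\circ\Phi$ with $\Psi(0)\cdot\gamma_\xi$, thinking of $\Psi(0):S^2\setminus D\to G^\C$ as a constant-in-$\lambda$ element of $\Lambda_{\mathrm{alg}}^+G^\C$ and of $\cdot$ as the natural $G^\C$-action on the homogeneous space $\Omega_\xi(G)$; this encodes the description $u_\xi[\gamma]=[\gamma(0)]$. Setting $\widetilde{\Psi}:=\Psi(0)$, one computes $\widetilde{\Psi}^{-1}\widetilde{\Psi}_z=\Psi(0)^{-1}\Psi(0)_z=X'_0$ by evaluating the power series $\Psi^{-1}\Psi_z$ at $\lambda=0$, and analogously $\widetilde{\Psi}^{-1}\widetilde{\Psi}_{\bar z}=X''_0$. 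Since these expressions have only a constant term in $\lambda$, the conditions (\ref{im}) for $u_\xi\circ\Phi$ reduce to $\mathrm{Im}\,X'_0\subset \mathfrak{p}^\xi_1$ and $\mathrm{Im}\,X''_0\subset \mathfrak{p}^\xi_0$, which are exactly the $i=0$ instances of the hypothesis on $\Phi$.

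The step I expect to require the most care is the identification $u_\xi(\Psi\cdot\gamma_\xi)=\Psi(0)\cdot\gamma_\xi$ in part (a). This involves unwinding the dressing construction and the holomorphic bundle structure on $U_\xi(G)\to \Omega_\xi(G)$, checking that $\Psi(0)\in G^\C$ acts on $\Omega_\xi(G)$ compatibly with the projection $[\gamma]\mapsto [\gamma(0)]$ and that the resulting map $u_\xi\circ\Phi$ is smooth with image in $\Omega_\xi(G)\subset \Omega_{\mathrm{alg}}G$. Once this identification is established, both assertions of the proposition follow directly from (\ref{im}) and the definition of the partial order $\preceq$.
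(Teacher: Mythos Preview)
Your proposal is correct and is essentially the argument one finds in the cited references; the present paper does not supply its own proof of this proposition but imports it from \cite{BG,correia_pacheco_3}. The identification $u_\xi(\Psi\cdot\gamma_\xi)=\Psi(0)\cdot\gamma_\xi$ that you flag as delicate is in fact taken as given in Section~2 of the paper (``the bundle map $u_\xi:U_\xi(G)\to \Omega_\xi(G)$ is precisely the natural projection given by $[\gamma]\mapsto [\gamma(0)]$''), so no further unwinding is needed and both parts follow directly from \eqref{im} together with the definition of~$\preceq$.
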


\subsection{Weierstrass representation.}\label{weircond}  Taking a larger discrete subset  if necessary, one obtains a more explicit description for harmonic maps of finite uniton number and their extended solutions as follows.

  \begin{prop}\label{BG}\cite{BG}
  Let $\Phi:S^2\to \Omega_{\mathrm{alg}}G$ be an extended solution.
  There exists a discrete set $D'\supseteq D$ of $S^2$  such that
  $\Phi{\big|_{S^2\setminus D'}}=\exp C\cdot \gamma_\xi$ for some  holomorphic vector-valued function $C: S^2\setminus D'\to \mathfrak{u}^0_\xi$,
  where $\mathfrak{u}^0_\xi$ is the finite dimensional nilpotent subalgebra of $\Lambda^+_{\mathrm{alg}}\g^\C$ defined by
$$\mathfrak{u}^0_\xi=\bigoplus_{0\leq i<r(\xi)}\lambda^i(\mathfrak{p}^\xi_i)^\perp,\quad (\mathfrak{p}^\xi_i)^\perp=\bigoplus _{i<j\leq r(\xi)}\mathfrak{g}_j^\xi,$$
with $r(\xi)=\mathrm{max}\{j\,|\,\,\g_j^\xi\neq 0\}$.
Moreover, $C$ can be extended meromorphically to $S^2$.
\end{prop}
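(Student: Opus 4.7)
The plan is to parametrize the unstable manifold $U_\xi(G)$ by the affine space $\mathfrak{u}^0_\xi$ via the exponential map, pull back $\Phi$ along this parametrization to obtain a holomorphic $C$ on $S^2\setminus D$, and extend meromorphically afterwards. The first ingredient is that $\mathfrak{u}^0_\xi$ is a finite-dimensional \emph{nilpotent} Lie subalgebra of $\Lambda^+_{\mathrm{alg}}\mathfrak{g}^\C$: closure under the bracket follows from $[\lambda^i(\mathfrak{p}_i^\xi)^\perp,\lambda^j(\mathfrak{p}_j^\xi)^\perp]\subset \lambda^{i+j}(\mathfrak{p}_{i+j}^\xi)^\perp$, and nilpotency because the right side vanishes once $i+j\geq r(\xi)$. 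Since $\mathrm{Ad}(\gamma_\xi^{-1})$ multiplies $\lambda^i\g_j^\xi$ by $\lambda^{i-j}$, the Lie algebra of the dressing stabilizer $L_\xi=\{\Psi\in\Lambda^+_{\mathrm{alg}}G^\C:\gamma_\xi^{-1}\Psi\gamma_\xi\in\Lambda^+G^\C\}$ is $\bigoplus_{i\geq 0}\lambda^i\mathfrak{p}_i^\xi\cap\Lambda^+_{\mathrm{alg}}\g^\C$, which is exactly complementary to $\mathfrak{u}^0_\xi$. These two facts give a smooth factorization $\Lambda^+_{\mathrm{alg}}G^\C=\exp(\mathfrak{u}^0_\xi)\cdot L_\xi$ with unique decomposition, hence a biholomorphism $\Theta:\mathfrak{u}^0_\xi\to U_\xi(G)$, $C\mapsto\exp(C)\cdot\gamma_\xi$.

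Given $\Phi:S^2\setminus D\to U_\xi(G)$ from Theorem \ref{usd}, set $C:=\Theta^{-1}\circ\Phi$. Holomorphy of $C$ is verified by inserting the gauge $\Psi=\exp C$ into the extended-solution conditions (\ref{im}): a brief computation using nilpotency of $\mathrm{ad}\,C$ shows $\Psi^{-1}\Psi_{\bar z}=\frac{1-e^{-\mathrm{ad}\,C}}{\mathrm{ad}\,C}(C_{\bar z})\in \mathfrak{u}^0_\xi=\bigoplus_i\lambda^i(\mathfrak{p}_i^\xi)^\perp$, while (\ref{im}) forces its $i$-th coefficient $X''_i$ to lie in $\mathfrak{p}_i^\xi$; the only common vector is zero, so $\Psi^{-1}\Psi_{\bar z}=0$, and invertibility of $\frac{1-e^{-\mathrm{ad}\,C}}{\mathrm{ad}\,C}$ on $\mathfrak{u}^0_\xi$ yields $C_{\bar z}=0$.

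To extend $C$ meromorphically to $S^2$, choose a faithful matrix representation $\rho:G\to\mathrm{End}(V)$. Both $\rho\circ\exp(C)$ and $\rho\circ\gamma_\xi$ are Laurent polynomials in $\lambda$ whose coefficients depend polynomially on the finitely many components of $C$, so the inversion $C=\Theta^{-1}\circ\Phi$ becomes a rational map of those coefficients. Since the components of $\rho\circ\Phi$ are smooth functions on the compact Riemann surface $S^2$, and in fact meromorphic in $z$ by the holomorphy already established on $S^2\setminus D$, the rational expression for $C$ defines a meromorphic extension; $D'$ is obtained by adjoining to $D$ the (discrete) zero locus of the denominators that arise. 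This last step is also the main obstacle: at a point of $D$ the loop $\Phi$ may drop into a lower Bruhat stratum $U_{\xi'}$ with $\xi'\neq \xi$, forcing $C$ to blow up. Certifying that the blow-up is of finite order --- a genuine pole rather than an essential singularity --- rests on the algebraic nature of $\Theta$, inherited from the finite-dimensionality and nilpotency of $\mathfrak{u}^0_\xi$, together with compactness of $S^2$ to promote local meromorphy to a globally rational extension.
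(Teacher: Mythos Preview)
The paper does not supply a proof of this proposition; it is quoted directly from Burstall--Guest \cite{BG}. Your outline follows the same architecture as the original: parametrize $U_\xi(G)$ by the nilpotent affine space $\mathfrak{u}^0_\xi$ via $C\mapsto\exp(C)\cdot\gamma_\xi$, deduce holomorphy of $C$ from the extended-solution conditions \eqref{im}, and finally extend meromorphically across $D$. The first two steps are essentially correct; your argument that $\Psi^{-1}\Psi_{\bar z}$ lies simultaneously in $\mathfrak{u}^0_\xi$ and in $\bigoplus_i\lambda^i\mathfrak{p}_i^\xi$, hence vanishes, is clean. (For the global factorization $\Lambda^+_{\mathrm{alg}}G^\C=\exp(\mathfrak{u}^0_\xi)\cdot L_\xi$ you implicitly use more than complementarity of Lie algebras---this is standard but deserves a citation.)

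The meromorphic-extension step, however, contains a gap. You assert that ``the components of $\rho\circ\Phi$ are \ldots\ meromorphic in $z$ by the holomorphy already established on $S^2\setminus D$''. This does not follow: holomorphy of $C$ does \emph{not} transfer to holomorphy of the matrix entries of $\rho(\Phi)$, because the dressing action $\exp(C)\cdot\gamma_\xi$ involves the Iwasawa projection $\Lambda G^\C\to\Omega G$, which is not holomorphic in the linear coordinates furnished by $\rho$. The complex structure on $\Omega G\cong\Lambda G^\C/\Lambda^+G^\C$ is not the one inherited from matrix entries, so the Fourier coefficients $A_k$ of $\rho(\Phi)$ are merely smooth, and a rational expression in smooth functions can certainly have essential singularities.

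The repair is to replace the $A_k$ by genuinely holomorphic data: the flag $W=\Phi\cdot H_+$ in the Grassmannian model of $\Omega G$, or equivalently Pl\"ucker-type coordinates on the Schubert variety $\overline{U_\xi(G)}$. This flag is continuous on all of $S^2$ (since $\Phi$ is smooth there), holomorphic on $S^2\setminus D$ (since $\Theta$ is biholomorphic for the correct complex structure and $C$ is holomorphic), hence extends holomorphically across $D$ into the compact target by removable singularities. The components of $C$ are then affine coordinates on the big cell, i.e.\ ratios of homogeneous coordinates, and are therefore meromorphic. This is the substance of the argument in \cite{BG}; your sketch identifies the right finite-dimensional nilpotent geometry but misidentifies which coordinates on $\Omega G$ carry the holomorphy.
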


Conversely, taking account \eqref{im} and the well-known formula for the derivative of the exponential map, we see that  if $C: S^2\to \mathfrak{u}^0_\xi$ is meromorphic
then $\Phi=\exp{C}\cdot \gamma_{\xi}$ is an
 extended solution if and only if  in the expression
\begin{equation}\label{C_z}
(\exp C)^{-1}(\exp C)_z=C_z-\frac{1}{2!}(\mathrm{ad} C)C_z+\ldots +(-1)^{r(\xi)-1}\frac{1}{r(\xi)!}(\mathrm{ad} C)^{r(\xi)-1}C_z,
  \end{equation}
  the coefficient $\lambda^i$  have zero component in each $\g_{i+2}^\xi,\ldots,\g^\xi_{r(\xi)}$.

\subsection{$S^1$-invariant extended solutions}
Extended solutions with values in some $\Omega_\xi(G)$, off a discrete subset, are said to be \emph{$S^1$-invariant}.
If we take a unitary representation $\rho:G \to U(n)$ for some $n$, then for any such extended solution $\Phi$ we have
$\rho\circ \Phi_\lambda=\sum_{i=r}^s\lambda^i\pi_{W_i},$
where, for each $i$,  $\pi_{W_i}$ is the orthogonal projection onto a complex vector subbundle $W_i$ of $\underline{\C}^n:=M\times \C^n$ and $\underline{\C}^n=\bigoplus_{i=r}^sW_i$ is an orthogonal direct sum decomposition.
Set $A_i=\bigoplus_{j\leq i }W_j$ so that
$$
\{0\}\subset A_r \subset \ldots \subset A_{i-1}\subset A_i\subset A_{i+1}\subset\ldots \subset A_s= \underline{\C}^n.$$
The harmonicity condition amounts to the following conditions on this flag: for each $i$, $A_i$ is a holomorphic subbundle of  $\underline{\C}^n$; the flag  is \emph{superhorizontal}, in the sense that, for each $i$, we have $\partial A_i\subseteq A_{i+1}$, that is, given any section $s$ of $A_i$ then $\frac{\partial s}{\partial z}$ is a section of $A_{i+1}$ for  any local complex coordinate $z$  of $S^2$.

\subsection{Normalization of harmonic maps}\label{BGnorm}

Let  $\Delta_0:=\{\alpha_1,\ldots,\alpha_r\}\subset \Delta^+$ be the basis of positive simple roots, with dual basis $\{H_1,\ldots, H_r\}\subset\mathfrak{t}$, that is $\alpha_i(H_j)=\mathrm{i}\,\delta_{ij}$, where $r=\mathrm{rank}(\mathfrak{g})$.
Given $\xi=\sum n_iH_i$ and $\xi'=\sum n'_iH_i$ in $\mathfrak{I}'({G})$,  we have $n_i,n'_i\geq 0$ and observe that  $\xi\preceq\xi'$ if and only if $n'_i\leq n_i$ for all $i$.
For each $I\subseteq \{1,\ldots,r\}$, define the cone
$\mathfrak{C}_{I}=\Big\{\sum_{i=1}^r n_i H_i|\, n_i\geq 0, \,\mbox{$n_j=0$ iff $j\notin I$}\Big\}.$

\begin{defn}\cite{correia_pacheco_5} Let $\xi\in\mathfrak{I}'({G})\cap \mathfrak{C}_{I}$. We say that $\xi$ is a \emph{$I$-canonical element} of $G$ with respect to  $\mathcal{W}$ if it is a maximal element of $(\mathfrak{I}'({G})\cap \mathfrak{C}_{I},\preceq)$, that is:  if $\xi\preceq \xi'$ and $\xi'\in \mathfrak{I}'({G})\cap \mathfrak{C}_{I}$ then $\xi=\xi'$.\end{defn}

When $G$ has trivial centre, which is the case considered in \cite{BG}, there exists a unique $I$-canonical element, which is given by $\xi_I=\sum_{i\in I}H_i$,  for each $I$. When $G$ has non-trivial centre,  the $I$-canonical elements of $G$ were described in \cite{correia_pacheco_4,correia_pacheco_5}.

Any harmonic map $\varphi:S^2\to G$ admits a \emph{normalized extended solution}, that is, an extended solution $\Phi$ taking  values in $U_\xi(G)$, off some discrete set, for some canonical element $\xi$. This is a consequence of the following  generalization of Theorem 4.5 in \cite{BG}.

\begin{thm}\label{nor}\cite{correia_pacheco_3} Let $\Phi:S^2\setminus D\to U_\xi(G)$ be an extended solution. Take $\xi'\in \mathfrak{I}'({G})$ such that $\xi\preceq {\xi'}$ and $\g_0^\xi=\g_0^{\xi'}$. Then $\gamma^{-1}:=\mathcal{U}_{\xi,\xi-\xi'}(\Phi)$ is a constant loop in $\Omega_{\mathrm{alg}}{G}$ and $\gamma\Phi:S^2\setminus D\to U_{\xi'}(G)$. \end{thm}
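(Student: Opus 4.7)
The plan is to take $\Phi_0 := \mathcal{U}_{\xi,\xi-\xi'}(\Phi)$, show it is $z$-independent, set $\gamma := \Phi_0^{-1}$, and then verify $\gamma\Phi\in U_{\xi'}(G)$ by manipulating dressing-action representatives. First I would observe that the condition $\xi\preceq\xi-\xi'$ needed for $\mathcal{U}_{\xi,\xi-\xi'}$ to be defined follows from $n_i'\geq 0$, which holds because $\xi'\in\mathcal{W}$; Proposition \ref{popo}(b) then guarantees that $\Phi_0:S^2\setminus D\to U_{\xi-\xi'}(G)$ is an extended solution. Writing $\Phi=\Psi\cdot\gamma_\xi$ locally gives $\Phi_0=\Psi\cdot\gamma_{\xi-\xi'}$ with the same $\Psi$.

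The heart of the proof is to show $\varphi_0:=(\Phi_0)_{-1}$ is constant. I would apply Lemma \ref{poi} to $\Phi_0$: letting ${X'_i}^{i+1}$ denote the component of $X'_i$ in $\g^{\xi-\xi'}_{i+1}$,
\[
\varphi_0^{-1}(\varphi_0)_z = -2\sum_{i\geq 0} b_0(0)\,{X'_i}^{i+1}\,b_0(0)^{-1}.
\]
Since $\mathrm{Im}\,X'_i\subseteq\mathfrak{p}^\xi_{i+1}$ by \eqref{im}, it would suffice to verify $\mathfrak{p}^\xi_{i+1}\cap\g^{\xi-\xi'}_{i+1}=0$ for every $i\geq 0$. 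A root $\alpha=\sum c_k\alpha_k$ contributing to this intersection would satisfy $\sum n_kc_k\leq i+1$ and $\sum(n_k-n'_k)c_k=i+1$, hence $\sum n'_kc_k\leq 0$. If $\alpha$ is positive ($c_k\geq 0$), combined with $n'_k\geq 0$ this forces $\alpha(\xi')=0$, whereupon the hypothesis $\g_0^\xi=\g_0^{\xi'}$ upgrades this to $\alpha(\xi)=0$, contradicting $\alpha(\xi-\xi')=(i+1)\mathrm{i}\neq 0$. If $\alpha$ is negative, the signs give $\alpha(\xi-\xi')\leq 0$, again a contradiction; and $\lt^\C\subset\g^{\xi-\xi'}_0$ is disjoint from $\g^{\xi-\xi'}_{i+1}$. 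Thus $\varphi_0^{-1}(\varphi_0)_z=0$, so compactness of $S^2$ and of $G$ forces $\varphi_0$ to be constant. Standard uniqueness for extended solutions with a prescribed value at $\lambda=-1$ (any two differ by a $z$-independent element of $\Omega G$) then yields $\Phi_0\equiv\gamma^{-1}$ for some constant loop $\gamma\in\Omega_{\mathrm{alg}}G$.

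For the last claim I would pick $b,b_0\in\Lambda^+_{\mathrm{alg}}G^\C$ such that $\Psi\gamma_\xi=\Phi b$ and $\Psi\gamma_{\xi-\xi'}=\Phi_0 b_0=\gamma^{-1}b_0$; then
\[
\gamma\Phi = \gamma\Psi\gamma_\xi b^{-1} = b_0\gamma_{\xi-\xi'}^{-1}\gamma_\xi b^{-1} = b_0\cdot\gamma_{\xi'}\cdot b^{-1},
\]
displaying $\gamma\Phi$ as the dressing-action element $b_0\cdot\gamma_{\xi'}\in U_{\xi'}(G)$. I expect the main technical obstacle to be the root-level verification $\mathfrak{p}^\xi_{i+1}\cap\g^{\xi-\xi'}_{i+1}=0$, which is where the hypothesis $\g_0^\xi=\g_0^{\xi'}$ really enters; the remainder is bookkeeping with Iwasawa factorizations together with the standard uniqueness of extended solutions associated to a given harmonic map.
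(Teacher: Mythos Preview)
Your proposal is correct. The paper does not supply its own proof of Theorem~\ref{nor} (it is quoted from \cite{correia_pacheco_3}), but your argument is precisely the one suggested by the tools the paper assembles: Proposition~\ref{popo}(b) to get that $\Phi_0=\mathcal{U}_{\xi,\xi-\xi'}(\Phi)$ is an extended solution, Lemma~\ref{poi} together with \eqref{im} to kill $\varphi_0^{-1}(\varphi_0)_z$, and an Iwasawa manipulation for the last claim that is identical to the one appearing in the proof of Proposition~\ref{norminf}. The root computation showing $\mathfrak{p}^\xi_{i+1}\cap\g^{\xi-\xi'}_{i+1}=0$ is exactly where the hypothesis $\g_0^\xi=\g_0^{\xi'}$ is used, as you anticipated.

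One minor simplification: once you have $\varphi_0^{-1}(\varphi_0)_z=0$ (and hence, by reality, $\varphi_0^{-1}(\varphi_0)_{\bar z}=0$), you need not invoke uniqueness of extended solutions. The extended-solution equation itself reads $\Phi_0^{-1}d\Phi_0=(1-\lambda^{-1})\alpha'+(1-\lambda)\alpha''$ with $\alpha',\alpha''$ the $(1,0)$ and $(0,1)$ parts of $\tfrac12\varphi_0^{-1}d\varphi_0$, so $d\Phi_0=0$ follows immediately on the connected set $S^2\setminus D$.
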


The uniton number of a normalized extended solution $\Phi:S^2\setminus D\to U_\xi(G)$  can be computed with respect to any irreducible $n$-dimensional representation ${\rho}:G\to\mathrm{End}(V)$ with highest weight $\omega^*$ and lowest weight  $\varpi^*$ as follows \cite{correia_pacheco_5}: $r_\rho(\xi):=\omega^*(\xi)-\varpi^*(\xi)$.

\section{Harmonic spheres in outer symmetric spaces}
 In the following sections we will establish our classification of  harmonic maps from $S^2$ into compact outer symmetric spaces and establish  a Weierstrass formula for such harmonic maps. These will  allow us to produce some explicit examples of harmonic maps from two-spheres into outer symmetric spaces from meromorphic data.

As we have referred in Section \ref{introd}, although any harmonic map from a two-sphere into an outer symmetric space $G/K$ admits a $T_\sigma$-invariant extended solution, not all $T_\sigma$-invariant extended solutions correspond to harmonic maps into  $G/K$; by Proposition \ref{concomp} and Theorem \ref{GO} below, they correspond to a harmonic map into some possibly different outer symmetric space $G/K'$ (compare Theorem \ref{RAs} with Theorem \ref{36} for an example where this happens).
 %On the other hand, the Burstall and Guest's normalization procedure, as described in Section \ref{BGnorm},  does not necessarily preserve $T_\sigma$-invariance. In Subsection \ref{normprocedure}, we will establish an appropriate normalization procedure for $T_\sigma$-invariant extended solutions.

%The classification of harmonic two-spheres into outer symmetric spaces by Eschenburg, Mare and Quast \cite{EMQ}  does not take into account the following crucial facts concerning extended solutions associated to harmonic maps into outer symmetric spaces: the Burstall and Guest's normalization procedure, as described in Section \ref{BGnorm},  does not necessarily preserve $T_\sigma$-invariance;

% In the present section we will see how the bundle morphisms $\mathcal{U}_{\xi,\xi'}$ act on $T_\sigma$-invariant extended solutions and we will also establish a more convenient normalization procedure of $T_\sigma$-invariant extended solutions.

\subsection{Symmetric $G$-spaces and Cartan embeddings}
Let $N=G/K$ be a symmetric space, where $K$ is the isotropy subgroup at the base point $x_0\in N$, and let $\sigma:G\to G$ be the corresponding involution: we have $G^{\sigma}_0\subseteq K\subseteq G^{\sigma},$ where $G^{\sigma}$ is the subgroup fixed by $\sigma$ and $G_0^{\sigma}$ denotes its connected component of the identity. We assume that $N$ is  a \emph{bottom space}, i.e.  $K=G^{\sigma}$. Let $\mathfrak{g}=\mathfrak{k}_\sigma\oplus \mathfrak{m}_\sigma$ be the $\pm1$-eigenspace decomposition associated to the involution $\sigma$, where $\mathfrak{k}_\sigma$ is the Lie algebra of $K$.
Consider the  (totally geodesic) \emph{Cartan embedding} $\iota_\sigma:N\hookrightarrow G$ defined by $\iota_\sigma (g\cdot x_0)=g\sigma(g^{-1})$. The image of the Cartan embedding is precisely the connected component $P^\sigma_e$ of $P^{\sigma}:=\{g\in G|\,\sigma(g)=g^{-1}\}$ containing the identity $e$ of the group $G$.
Observe that,  given $\xi\in \mathfrak{I}(G)\cap \mathfrak{k}_\sigma$, then $\exp(\pi\xi)\in P^\sigma$.
We denote by
$P_{\xi}^{\sigma}$ the connected component  of
$P^{\sigma}$ containing $\exp(\pi\xi)$.

\begin{prop}\label{concomp}
  Given $\xi\in \mathfrak{I}(G)\cap \mathfrak{k}_\sigma$, we have the following.
\begin{enumerate}
  \item[a)]  $G$ acts transitively on $P_{\xi}^\sigma$ as follows: for $g\in G$ and $h\in  P_{\xi}^{\sigma}$,
\begin{equation}\label{gaction}
g\cdot_\sigma h=gh\sigma(g^{-1}).\end{equation}
\item[b)] $P^{\sigma}_{\xi}$ is a bottom symmetric $G$-space totally geodesically embedded in $G$ with involution
\begin{equation}\label{tau}
\tau=\mathrm{Ad}(\exp\pi\xi)\circ \sigma.
\end{equation}
%In particular, $P^\sigma_{g_0}\cong N$  if and only if $P^\sigma_{g_0}$ contains some element of $Z(G)$.
\item[c)] For any other $\xi'\in \mathfrak{I}(G)\cap\mathfrak{k}_\sigma$   we have $\exp(\pi\xi')\in P^{\tau}$ and
$P^\tau_{\xi'}=\exp(\pi\xi) P_{\xi'-\xi}^\sigma.$
\item[d)]
The $\pm 1$-eigenspace decomposition $\mathfrak{g}=\mathfrak{k}_\tau\oplus\mathfrak{m}_\tau$ associated to the symmetric $G$-space $P_{\xi}^\sigma$ at the fixed point $\exp(\pi\xi)\in P_{\xi}^\sigma$ is given by
\begin{align}
\label{hc} \mathfrak{k}_\tau^\C&= \bigoplus \mathfrak{g}^\xi_{2i}\cap \mathfrak{k}_\sigma^\C\oplus \bigoplus\mathfrak{g}^\xi_{2i+1}\cap \mathfrak{m}_\sigma^\C\\  \label{pc} \mathfrak{m}_\tau^\C&=\bigoplus \mathfrak{g}^\xi_{2i+1}\cap \mathfrak{k}_\sigma^\C\oplus \bigoplus\mathfrak{g}^\xi_{2i}\cap \mathfrak{m}_\sigma^\C.
\end{align}
\end{enumerate}
\end{prop}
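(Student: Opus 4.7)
The plan is to prove the four parts in the order (d), (a), (b), (c), because the eigenspace bookkeeping of (d) is what will make the submersion step in (a) transparent, and (b), (c) will then reduce to routine verifications. The single tool driving everything is that $\xi\in\mathfrak{I}(G)\cap\mathfrak{k}_\sigma$ forces $\mathrm{Ad}(\exp\pi\xi)=e^{\pi\,\mathrm{ad}\,\xi}$ to act as $(-1)^j$ on $\mathfrak{g}^\xi_j$ (since $\exp(2\pi\xi)=e$ puts the eigenvalues of $\mathrm{ad}\,\xi$ in $\mathrm{i}\mathbb{Z}$), and that $\sigma(\xi)=\xi$ makes $\sigma$ and $\mathrm{Ad}(\exp\pi\xi)$ commute and preserves the refined decomposition $\mathfrak{g}^\C=\bigoplus_j\bigl((\mathfrak{g}^\xi_j\cap\mathfrak{k}_\sigma^\C)\oplus(\mathfrak{g}^\xi_j\cap\mathfrak{m}_\sigma^\C)\bigr)$.

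For (d), I first verify $\tau^2=\mathrm{Ad}(\exp 2\pi\xi)\circ\sigma^2=\mathrm{Id}$ by pushing $\sigma$ past $\mathrm{Ad}(\exp\pi\xi)$, and then read off the action of $\tau=\mathrm{Ad}(\exp\pi\xi)\circ\sigma$ on each simultaneous eigenspace: the sign is $(-1)^j\cdot(+1)$ on $\mathfrak{g}^\xi_j\cap\mathfrak{k}_\sigma^\C$ and $(-1)^j\cdot(-1)$ on $\mathfrak{g}^\xi_j\cap\mathfrak{m}_\sigma^\C$, giving \eqref{hc} and \eqref{pc} immediately. For (a), I use $\exp(2\pi\xi)=e$ to check $\sigma(\phi(g))=\phi(g)^{-1}$ for $\phi(g)=g\exp(\pi\xi)\sigma(g^{-1})$, so the orbit lands in $P^\sigma$; continuity and connectedness of $G$ keep it inside the component $P^\sigma_\xi$ containing $\exp\pi\xi=\phi(e)$. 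Transitivity amounts to $\phi$ being a submersion at $e$; left-trivializing, $d\phi_e(X)$ corresponds to $\mathrm{Ad}(\exp(-\pi\xi))X-\sigma(X)$, and the sign chase from (d) shows its image is exactly $\mathfrak{m}_\tau^\C$. A separate identification of $T_{\exp\pi\xi}P^\sigma$, obtained by differentiating the defining equation $\sigma(h)=h^{-1}$ at $h=\exp\pi\xi$, yields the same subspace, so $\phi$ is submersive, its image is open, and, being the compact continuous image of $G$, also closed: thus the orbit equals the connected component. The isotropy at $\exp\pi\xi$ is exactly $G^\tau$ since $g\exp(\pi\xi)\sigma(g^{-1})=\exp\pi\xi$ is equivalent to $g=\mathrm{Ad}(\exp\pi\xi)\sigma(g)=\tau(g)$.

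Part (b) then follows: the homogeneous identification $P^\sigma_\xi\cong G/G^\tau$ together with the involution $\tau$ of (d) exhibits $P^\sigma_\xi$ as a bottom symmetric $G$-space, and total geodesicity comes for free because $P^\sigma$ is the fixed-point set of the isometry $g\mapsto\sigma(g^{-1})$ of $G$ with its bi-invariant metric. For (c), a short calculation in $T$ gives $\tau(\exp\pi\xi')=\exp\pi\xi'=\exp(-\pi\xi')^{-1}$ (using $\xi,\xi'\in\mathfrak{t}$ and $\exp(2\pi\xi')=e$), and the same manipulation shows $\tau(\exp(\pi\xi)h)=(\exp(\pi\xi)h)^{-1}$ whenever $\sigma(h)=h^{-1}$; thus left multiplication by $\exp(\pi\xi)$ is a diffeomorphism $P^\sigma\to P^\tau$ carrying $P^\sigma_{\xi'-\xi}$ onto the component of $P^\tau$ through $\exp(\pi\xi)\exp(\pi(\xi'-\xi))=\exp\pi\xi'$, which is $P^\tau_{\xi'}$. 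The hard part of the whole proof is the surjectivity of $d\phi_e$ in (a); without the piecewise $(-1)^j$ computation of (d) in hand it looks opaque, which is why the parts are taken in that order.
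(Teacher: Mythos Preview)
Your proof is correct, and the route is genuinely different from the paper's in the key step (a). The paper never computes $d\phi_e$; instead it uses the already-stated fact that the Cartan embedding $\iota_\sigma$ surjects onto $P_e^\sigma$ to get transitivity on $P_e^\sigma$ for free, and then bootstraps: it first shows $P^\tau=\exp(\pi\xi)P^\sigma$ (your part (c)), which gives $P^\sigma_\xi=\exp(\pi\xi)P_e^\tau$, and then invokes transitivity on $P_e^\tau$ (Cartan embedding for $\tau$) to conclude transitivity on $P^\sigma_\xi$. Total geodesicity in the paper likewise piggybacks on the known fact that $P_e^\tau\subset G$ is totally geodesic, transported by the isometry of left multiplication by $\exp\pi\xi$. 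Part (d) comes last, by differentiating the isotropy condition.

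Your approach trades this bootstrap for a direct differential computation: doing (d) first makes the image of $d\phi_e$ transparent, and the submersion/open--closed argument then gives transitivity without ever appealing to the Cartan embedding. Your fixed-point-of-isometry argument for total geodesicity (via $g\mapsto\sigma(g^{-1})$) is also cleaner and more self-contained than the paper's two-step version. The price is that you implicitly use that $P^\sigma$ is a submanifold (to speak of $T_{\exp\pi\xi}P^\sigma$) before you state the isometry-fixed-point fact in (b); this is harmless but worth making explicit earlier. The paper's approach is shorter because it leans on the Cartan embedding, while yours is more elementary and stands on its own.
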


\begin{proof}

Take $h\in P^\sigma$. We have
$$\sigma(g\cdot_\sigma h)=\sigma(gh\sigma(g^{-1}))=\sigma(g)h^{-1}g^{-1}=(gh\sigma(g^{-1}))^{-1}=(g\cdot_\sigma h)^{-1}.$$
Then $g\cdot_\sigma h\in P^\sigma$ and we have a continuous action of $G$ on $P^\sigma$. Since $G$ is connected, this action induces an action of $G$ on each connected component  of $P^\sigma$.
Since $g\cdot_\sigma e=g\sigma(g^{-1})=\iota_\sigma(g\cdot x_0)$ and $\iota_\sigma(N)=P_e^\sigma$, the action $\cdot_\sigma$  of $G$ on
$P_e^\sigma$ is transitive.

Take $\xi\in \mathfrak{I}(G)\cap \mathfrak{k}_\sigma$, so that $\sigma(\xi)=\xi$ and $\exp2\pi\xi=e$. Consider the involution $\tau$ defined by \eqref{tau}. If $g\in P^\sigma$, then
 $$\tau(\exp(\pi\xi)g)=\exp(\pi\xi)\sigma(\exp(\pi\xi)g)\exp(\pi\xi)=\sigma(g)\exp(\pi\xi)=(\exp(\pi\xi)g)^{-1},$$  which means that $\exp(\pi\xi)g\in P^\tau$. Reciprocally, if $\exp(\pi\xi)g\in P^\tau$, one can check similarly that $g\in P^{\sigma}$. Hence
   $P^\tau=\exp(\pi\xi)P^\sigma$. In particular, by continuity, $P^\tau_{\xi'}=\exp(\pi\xi) P^\sigma_{\xi'-\xi}$  for any other $\xi'\in \mathfrak{I}(G)$ with $\sigma(\xi')=\xi'$.

 Reversing the rules of $\sigma= \mathrm{Ad}(\exp\pi\xi)\circ \tau$ and $\tau$,  we also have $P^\sigma_{\xi}=\exp(\pi\xi)P_e^\tau.$
 Since $G$ acts transitively on $P_e^\tau$, for each $h\in P^\sigma_{\xi}$ there exists $g\in G$ such that
 $$h=\exp(\pi\xi) (g\cdot_\tau e)=(\exp(\pi\xi)g)\cdot_\sigma \exp(\pi\xi).$$ This shows that $G$ also acts transitively on
$P^\sigma_{\xi}$. The isotropy subgroup at $\exp(\pi\xi)$ consists of those elements $g$ of $G$ satisfying $g\exp(\pi\xi)\sigma(g^{-1})=\exp(\pi\xi)$,
that is  those elements $g$ of $G$ which are fixed by $\tau$:
\begin{equation}\label{isot}
\exp(\pi\xi)\sigma(g)\exp(\pi\xi)=g.
\end{equation}
Hence
$P^\sigma_{\xi}\cong G/G^{\tau},$ which is a bottom symmetric $G$-space with involution $\tau$.
Since $P_e^\tau\subset G$ totally geodesically and $P^\sigma_\xi$ is the image of $P_e^\tau$ under an isometry (left multiplication by $\exp\pi\xi$), then  $P_{\xi}^\sigma\subset G$  totally geodesically.

% If $P^\sigma_{g_0}\cong N$, then $\tau=Ad_g\circ \sigma \circ Ad_{g^{-1}}$ for some $g\in G$. Hence $Ad_{g_0}=Ad_{g\sigma(g^{-1})}$, which means that $g_0=zg\sigma(g^{-1})=g\cdot_\sigma z$ for some $z\in Z(G)$, and consequently $z\in P_{g_0}^\sigma$.

Differentiating \eqref{isot} at the identity we get
$\mathfrak{k}_\tau=\{X\in \mathfrak{g}|\, X=\mathrm{Ad}(\exp\pi\xi)\circ\sigma(X)\}.$
Taking account of the  formula $\mathrm{Ad}({\exp (\pi\xi)})=e^{\pi \mathrm{ad} \xi}$ and that $\sigma$ commutes with $\mathrm{ad}\xi$,
we obtain  \eqref{hc}; and \eqref{pc} follows similarly.
\end{proof}

\subsubsection{Outer symmetric spaces.}
%If $s_0$ is the geodesic reflection at  $x_0\cong eK$ and $G$ is the group $I(N)$ of isometries of $N$, the involution $\sigma$ is given by $\sigma=\mathrm{Ad}(s_0)$.  Recall that the symmetric space $N=G/K$ is \emph{inner} if $s_0\in I_0(N)$ and \emph{outer} otherwise, where $I_0(N)$ is the connected component of $I(N)$ containing the identity. It is well known that  $N$ is inner if and only if $\mathrm{rank}(K)=\mathrm{rank}(G)$ \cite{He}.

The existence of outer involutions of a simple Lie algebra $\mathfrak{g}$ depends on the existence of non-trivial involutions of the Dynkin diagram of $\mathfrak{g}^\C$ \cite{BR,EMQ,He,Mu}.
Fix a maximal abelian subalgebra  $\mathfrak{t}$ of $\mathfrak{g}$ and a Weyl chamber $\mathcal{W}$ in $\mathfrak{t}$, which amounts to fix a system of positive simple roots $\Delta_0=\{\alpha_1,\ldots,\alpha_r\}$, where $r=\mathrm{rank}(\mathfrak{g})$. Let $\varrho$  be a non-trivial involution of the Dynkin diagram and $\sigma_\varrho$  the \emph{fundamental outer involution} associated to $\varrho$  \cite{BR,Mu}.  The (local isometry classes of) outer symmetric spaces of compact type associated to involutions of the form $\sigma_\varrho$ are precisely
\begin{quote}$SU(2n)/Sp(n)$, $SU(2n+1)/SO(2n+1)$, $E_6/F_4$ and  the real projective spaces  $\mathbb{R}P^{2n-1}$.\end{quote}
These spaces are called the \emph{fundamental outer symmetric spaces}. The remaining classes of outer involutions are obtained as follows \cite{BG,Mu}.
%Extend $\varrho$ by linearity and duality to give an involution of $\mathfrak{t}$. This is the restriction of $ \sigma_\varrho$ to $\mathfrak{t}$.  For a  suitable choice of root %vectors $X_\alpha$ of $\mathfrak{\g}_\alpha$, with $\alpha \in \Delta_0$, the restriction of $ \sigma_\varrho$ to the span of these vectors is given by $ %\sigma_\varrho(X_\alpha)=X_{\varrho(\alpha)}$. The \emph{fundamental outer involution} $\sigma_\varrho$ associated to $\varrho$ is the unique extension of this to an outer involution %of $\mathfrak{g}$.

Let $\mathfrak{g}=\mathfrak{k}_{\varrho}\oplus \mathfrak{m}_{\varrho}$ be the corresponding  $\pm 1$-eigenspace decomposition of $\mathfrak{g}$.
As shown in Proposition 3.20 of \cite{BR}, the Lie subalgebra $\mathfrak{k}_{\varrho}$ is simple and the orthogonal projection of $\Delta_0$ onto $\mathfrak{k}_{\varrho}$,  $\pi_{\mathfrak{k}_{\varrho}}(\Delta_0)$,  is a basis of positive simple roots of $\mathfrak{k}_\varrho$ associated to the maximal abelian subalgebra $\mathfrak{t}_{\mathfrak{k}_{\varrho}}:=\mathfrak{t}\cap \mathfrak{k}_{\varrho}$.
Consider the split $\mathfrak{t}=\mathfrak{t}_{{\mathfrak{k}_\varrho}}\oplus \mathfrak{t}_{\mathfrak{m}_\varrho}$ with respect to  $\mathfrak{g}={\mathfrak{k}_\varrho}\oplus {\mathfrak{m}_\varrho}$.
Set $s=r-k$, where $k=\mathrm{rank}(\mathfrak{k}_\varrho)$. We can label the basis $\Delta_0$ in order to  get the following relations: $\varrho (\alpha_j)=\alpha_j$ for $1\leq j\leq k-s$ and  $\varrho (\alpha_j)=\alpha_{s+j}$ for $k-s+1\leq j\leq k$. Let $\pi_{\mathfrak{k}_\varrho}$ be the orthogonal projection of $\mathfrak{t}$ onto $\mathfrak{t}_{{\mathfrak{k}_\varrho}}$, that is
$\pi_{\mathfrak{k}_\varrho}(H)=\frac12(H+\sigma_\varrho (H))$ for all $H\in \mathfrak{t}$. Set $\pi_{{\mathfrak{k}_\varrho}}(\Delta_0)=\{\beta_1,\ldots,\beta_k\}$, with
\begin{equation}\label{bes}
\beta_j=\left\{\begin{array}{cl} \alpha_j & \mbox{for $1\leq j\leq k-s$}
            \\ \frac12(\alpha_j+\alpha_{j+s}) & \mbox{for $k-s+1\leq j\leq k$}
\end{array}\right..\end{equation}
This  is a basis of $\mathrm{i}\mathfrak{t}_{\mathfrak{k}_\varrho}^*$ with dual basis $\{\zeta_1,\ldots, \zeta_k\}$ given by
\begin{equation}\label{zes}
\zeta_j=\left\{\begin{array}{cl}
            H_j & \mbox{for $1\leq j\leq k-s$}  \\
           H_j+H_{j+s} & \mbox{for $k-s+1\leq j\leq k$}
          \end{array}\right..\end{equation}
          \begin{thm} \cite{Mu}\label{murak}
 Let $\varrho$ be an involution of the Dynkin diagram of $\mathfrak{g}$. Let
 $$\omega=\sum_{j=1}^{k-s}n_j\beta_j+\sum_{j=k-s+1}^{k}n'_j\beta_j$$  be the highest root of $\mathfrak{k}_\varrho$ with respect to $\pi_{\mathfrak{k}_{\varrho}}(\Delta_0)=\{\beta_1,\ldots,\beta_k\}$, defined as in \eqref{bes}. Given $i$ such that  $n_i=1$ or $2$, define an involution $\sigma_{\varrho,i}$ by
  \begin{equation}\label{sigmasis}
  \sigma_{\varrho,i}=\mathrm{Ad}(\exp\pi \zeta_i)\circ \sigma_\varrho.\end{equation} Then any outer involution of $\mathfrak{g}$ is conjugate in $\mathfrak{Aut}(\mathfrak{g})$, the group of automorphisms of $\mathfrak{g}$, to some $\sigma_\varrho$ or  $\sigma_{\varrho,i}$. In particular, there are at most $k-s+1$ conjugacy classes of outer involutions.
\end{thm}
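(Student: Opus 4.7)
\emph{Proof plan.} The strategy is a two-step normalization of $\sigma$. \emph{First}, every outer automorphism of $\mathfrak{g}$ acts on the Dynkin diagram of $\mathfrak{g}^{\mathbb{C}}$ by an involution, and after conjugation in $\mathfrak{Aut}(\mathfrak{g})$ one may assume the induced diagram involution coincides with the fixed $\varrho$. Then $\sigma\circ\sigma_\varrho^{-1}$ fixes the chosen system of simple roots and is therefore inner, so a further Weyl-group conjugation brings it to $\mathrm{Ad}(g)$ with $g\in T$. Consequently $\sigma=\mathrm{Ad}(g)\circ\sigma_\varrho$ with $g\in T$, and the involution condition $\sigma^{2}=\mathrm{id}$ becomes $g\sigma_\varrho(g)\in Z(G)$.

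\emph{Second}, two torus representatives $g_1,g_2\in T$ produce $\mathfrak{Aut}(\mathfrak{g})$-conjugate involutions exactly when there exists $h\in G$ with $g_2=hg_1\sigma_\varrho(h^{-1})$, which is the natural action $\cdot_{\sigma_\varrho}$ of Proposition \ref{concomp}(a). Writing $g=\exp(\pi H)$ with $H\in\mathfrak{t}_{\mathfrak{k}_\varrho}$ and $\exp(2\pi H)\in Z(G)$, the remaining freedom on $H$ is the affine Weyl group of $\mathfrak{k}_\varrho$ acting on $\mathfrak{t}_{\mathfrak{k}_\varrho}$; one may therefore restrict $H$ to the closed fundamental alcove $\mathcal{A}\subset\mathfrak{t}_{\mathfrak{k}_\varrho}$, cut out by the hyperplanes $\beta_j=0$ for $1\leq j\leq k$ together with $\omega=\mathrm{i}$.

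\emph{Third}, one reads off the classification from the combinatorics of $\mathcal{A}$. The alcove is a simplex with vertex $H=0$ and vertices $\zeta_j/n_j$ (respectively $\zeta_j/n'_j$) for $1\leq j\leq k-s$ (respectively $k-s+1\leq j\leq k$). At a nonzero vertex $\zeta_i/c_i$, the constraint $\exp(2\pi H)\in Z(G)$ combined with $\zeta_i\in\mathfrak{I}(G^{\sigma_\varrho})$ forces the mark $c_i$ to divide $2$. The vertex $H=0$ recovers the fundamental outer involution $\sigma_\varrho$, while each admissible nonzero vertex produces exactly one of the involutions $\sigma_{\varrho,i}=\mathrm{Ad}(\exp\pi\zeta_i)\circ\sigma_\varrho$ of \eqref{sigmasis}.

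The main obstacle is the accurate bookkeeping of the centre $Z(G)$ in the third step: depending on the isomorphism type of $G$, distinct alcove vertices may represent the same conjugacy class after a non-trivial central modification, while other vertices may fail to lift to genuine involutions at all; the final count of at most $k-s+1$ conjugacy classes follows from a case-by-case inspection of the admissible vertices for each simple Dynkin diagram of $\mathfrak{k}_\varrho$, which is precisely the content of Murakami's analysis in \cite{Mu}.
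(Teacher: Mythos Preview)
The paper does not supply its own proof of this theorem: it is stated with the citation \cite{Mu} and used as a black box throughout. There is therefore no in-paper argument to compare your proposal against.

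That said, your outline is essentially the standard route to Murakami's classification (and close to what one finds in \cite{Mu} and in the related accounts in \cite{BR,He}): reduce an arbitrary outer involution to the form $\mathrm{Ad}(g)\circ\sigma_\varrho$ with $g\in T$, absorb the $\mathfrak{t}_{\mathfrak{m}_\varrho}$-part of $g$ by conjugation so that $g=\exp(\pi H)$ with $H\in\mathfrak{t}_{\mathfrak{k}_\varrho}$, and then reduce modulo the affine Weyl group of $\mathfrak{k}_\varrho$ to a vertex of the fundamental alcove. Two small points are worth tightening. First, your sketch lists all alcove vertices $\zeta_j/n_j$ and $\zeta_j/n'_j$, whereas the theorem only retains the indices $1\leq i\leq k-s$ (those with $\varrho(\alpha_i)=\alpha_i$) and arrives at the bound $k-s+1$; the vertices coming from $k-s+1\leq j\leq k$ collapse back to $\sigma_\varrho$ under conjugation, and this collapse deserves an explicit sentence rather than being folded into ``case-by-case inspection''. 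Second, your involution constraint should be phrased at the level of $\mathfrak{Aut}(\mathfrak{g})$ (i.e.\ $g\sigma_\varrho(g)\in Z(G)$ as you wrote) rather than as a lattice condition on $\zeta_i$, since the paper later emphasises that $\zeta_i$ need not lie in $\mathfrak{I}(G^{\sigma_\varrho})$ when $G$ has nontrivial centre (see the discussion preceding the $E_6$ remark and the treatment of $SU(2n)/SO(2n)$).
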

The list of all (local isometry classes of) irreducible outer symmetric spaces of compact type is shown in Table 1 (cf. \cite{BR,EMQ, He}).
\begin{table}[!htb]
\begin{tabular}{|c |c| c| c|c|}
  \hline
  % after \\: \hline or \cline{col1-col2} \cline{col3-col4} ...
  \small{$G/K$} & $\mathrm{rank}(G)$ & $\mathrm{rank}(K)$ & $\mathrm{rank}(G/K)$ & $\mathrm{dim}(G/K)$ \\  \hline
  $SU(2n)/SO(2n)$ & $2n-1$ & $n$ & $2n-1$ & $(2n-1)(n+1)$\\ \hline
  $SU(2n+1)/SO(2n+1)$ & $2n$ & $n$ & $2n$ & $n(2n+3)$ \\ \hline
  $SU(2n)/Sp(n)$  & $2n-1$ & $n$ & $ n-1$ &  $(n-1)(2n+1)$ \\ \hline
  $G_{p}(\mathbb{R}^{2n})$ ($p$ odd $\leq n$) & $n$ & $n-1$ & $p$ & $p(2n-p)$\\ \hline
  $E_6/Sp(4)$ & $6$ & $4$ & $6$ & $42$ \\ \hline
  $E_6/F_4$ & $6$ & $4$ & $2$& $26$ \\
  \hline
\end{tabular}
 \vspace{.03in}

\caption{Irreducible outer symmetric spaces.}
 \end{table}

Given an outer involution $\sigma$ of the form $\sigma_{\varrho,i}$ or $\sigma_\varrho$ and its $\pm 1$-eigenspace decomposition $\g=\mathfrak{k}_\sigma\oplus \mathfrak{m}_\sigma$,
set $\mathfrak{t}_{\mathfrak{k}_\sigma}=\mathfrak{t}\cap \mathfrak{k}_\sigma$, which is a maximal abelian subalgebra of  $\mathfrak{k}_\sigma$. Following \cite{EMQ}, a non-empty intersection of  $\mathfrak{t}_{\mathfrak{k}_\sigma}$ with a Weyl chamber in $\mathfrak{t}$ is called a \emph{compartment}.  Each compartment lies in a Weyl chamber in  $\mathfrak{t}_{\mathfrak{k}_\sigma}$ and the Weyl chambers in $\mathfrak{t}_{\mathfrak{k}_\sigma}$ can be decomposed into the same number of compartments \cite{EMQ}.

The intersection of the integer lattice  $\mathfrak{I}(G)$ with the Weyl chamber $\mathcal{W}$ in $\mathfrak{t}$, which we have denoted by $\mathfrak{I}'(G)$, is described in terms of the dual basis $\{H_1,\ldots,H_r\}\subset \mathfrak{t}$, with $r=\mathrm{rank} (\mathfrak{g})$, by
$$ \mathfrak{I}'(G)=\big\{\sum_{i=1}^r n_iH_i\in \mathfrak{I}(G) |\,\mbox{$n_i\in \mathbb{N}_0$ for all $i$}\big\}.$$
When $\sigma$ is a  fundamental outer involution $\sigma_\varrho$, the compartment $\mathcal{W}\cap \mathfrak{t}_{\mathfrak{k_\varrho}}$ is itself a Weyl chamber in $\mathfrak{t}_{\mathfrak{k_\varrho}}$. Then, the intersection of  the integer lattice  $\mathfrak{I}(G^{\sigma_\varrho})$ with the Weyl chamber $\mathcal{W}\cap \mathfrak{t}_{\mathfrak{k_\varrho}}$,
is given by
$$ \mathfrak{I}'(G^{\sigma_\varrho})=\big\{\sum_{i=1}^k n_i\zeta_i\in \mathfrak{I}(G) |\,\mbox{$n_i\in \mathbb{N}_0$ for all $i$}\big\}=\mathfrak{I}'(G)\cap \mathfrak{t}_{\mathfrak{k_\varrho}}.$$

\subsubsection{Cartan embeddings of fundamental outer symmetric spaces.}

Next we describe those elements $\xi$ of $ \mathfrak{I}'(G^{\sigma_\varrho})$ for which the connected component $P_\xi^{\sigma_\varrho}$ of $P^{\sigma_\varrho}$ containing $\exp(\pi\xi)$ can be identified with the fundamental outer symmetric $G$-space associated to $\varrho$.
Start by considering the following $\sigma_\varrho$-invariant subsets  of the  root system $\Delta\subset \mathrm{i}\mathfrak{t}^*$ of $\mathfrak{g}$:
\begin{align}\label{deltas}
 \Delta(\mathfrak{k}_\varrho)=\{\alpha\in\Delta|\, \g_\alpha\subset \mathfrak{k}_\varrho^\C\},\,\,
   \Delta(\mathfrak{m}_\varrho)=\{\alpha\in\Delta|\, \g_\alpha\subset \mathfrak{m}_\varrho^\C\},\,\, \Delta_\varrho=\Delta\setminus  \left(\Delta(\mathfrak{k}_\varrho)\cup  \Delta(\mathfrak{m}_\varrho)\right).
\end{align}
 Then
\begin{equation*}
   \mathfrak{k}_\varrho^\C=\mathfrak{t}^\C_{\mathfrak{k}_\varrho}\oplus\pi_{\mathfrak{k}_\varrho}(\mathfrak{r}_\varrho)\oplus\bigoplus_{\alpha\in \Delta(\mathfrak{k}_\varrho)}\g_\alpha,\,\,\,  \mathfrak{m}_\varrho^\C=\mathfrak{t}^\C_{\mathfrak{m}_\varrho}\oplus\pi_{\mathfrak{m}_\varrho}(\mathfrak{r}_\varrho)\oplus\bigoplus_{\alpha\in \Delta(\mathfrak{m}_\varrho)}\g_\alpha,
\end{equation*}
where $\mathfrak{r}_\varrho=\bigoplus_{\alpha\in\Delta_\varrho}\g_\alpha$. Since the involution $\varrho$ acts on $\Delta_\varrho$ as a permutation without fixed points, we can fix some subset $\Delta'_\varrho$ so that $\Delta_\varrho$ is the disjoint union of $\Delta'_\varrho$ with $\varrho(\Delta'_\varrho)$:
\begin{equation}\label{delta1}
  \Delta_\varrho= \Delta'_\varrho {\sqcup}\,\, \varrho(\Delta'_\varrho).
\end{equation}
For each $\alpha \in\Delta'_\varrho$,   $\sigma_\varrho$ restricts to an involution in the subspace $\g_\alpha\oplus \g_{\varrho(\alpha)}\subset \mathfrak{r}_\varrho$. Hence we have the following.
\begin{lem}\label{pmmm}The orthogonal projections of $\mathfrak{r}_\varrho$ onto $\mathfrak{k}_\varrho^\C$ and $\mathfrak{m}_\varrho^\C$ are given by
 $$\pi_{\mathfrak{k}_\varrho}(\mathfrak{r}_\varrho)=\!\bigoplus_{\alpha\in\Delta'_\varrho}\!  \mathfrak{k}_\varrho^\C\cap \big(\g_\alpha\oplus\g_{\varrho(\alpha)}\big),\,\,\pi_{\mathfrak{m}_\varrho}(\mathfrak{r}_\varrho)=\!\!\bigoplus_{\alpha\in\Delta'_\varrho}\!  \mathfrak{m}_\varrho^\C\cap \big(\g_\alpha\oplus\g_{\varrho(\alpha)}\big),$$
 and, for each  $\alpha\in\Delta'_\varrho$,
 $$ \mathfrak{k}_\varrho^\C\cap \big(\g_\alpha\oplus\g_{\sigma_\varrho(\alpha)}\big)=\{X_\alpha+\sigma_\varrho(X_\alpha)|\, X_\alpha\in \g_\alpha\},\,\,\, \mathfrak{m}_\varrho^\C\cap \big(\g_\alpha\oplus\g_{\sigma(\alpha)}\big)=\{X_\alpha-\sigma_\varrho(X_\alpha)|\, X_\alpha\in \g_\alpha\}.$$
 In particular, $\dim \mathfrak{r}_\varrho=2\dim \pi_{\mathfrak{k}_\varrho}(\mathfrak{r}_\varrho)= 2\dim  \pi_{\mathfrak{m}_\varrho}(\mathfrak{r}_\varrho)$.
\end{lem}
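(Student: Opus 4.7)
The plan is to reduce the statement to a plane-by-plane verification on each two-dimensional, $\sigma_\varrho$-stable subspace $\g_\alpha\oplus\g_{\varrho(\alpha)}$ with $\alpha\in\Delta'_\varrho$; the projections of $\mathfrak{r}_\varrho$ onto $\mathfrak{k}_\varrho^\C$ and $\mathfrak{m}_\varrho^\C$, together with the claimed eigenspace descriptions, will all fall out of a direct inspection of such a plane.

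First I would recall that $\sigma_\varrho$ preserves $\mathfrak{t}^\C$ and acts dually on $\mathrm{i}\mathfrak{t}^*$ as the permutation $\varrho$, so that $\sigma_\varrho(\g_\alpha)=\g_{\varrho(\alpha)}$ for every $\alpha\in\Delta$. If $\alpha\in\Delta_\varrho$ then $\varrho(\alpha)\neq\alpha$: otherwise the one-dimensional space $\g_\alpha$ would be $\sigma_\varrho$-stable, hence contained entirely in $\mathfrak{k}_\varrho^\C$ or in $\mathfrak{m}_\varrho^\C$, contradicting the definition of $\Delta_\varrho$ in \eqref{deltas}. Consequently, for each $\alpha\in\Delta'_\varrho$, $\sigma_\varrho$ restricts to an involution of the plane $\g_\alpha\oplus\g_{\varrho(\alpha)}$, and the map $X_\alpha\mapsto X_\alpha+\sigma_\varrho(X_\alpha)$ is linear and injective from $\g_\alpha$ into $\mathfrak{k}_\varrho^\C\cap(\g_\alpha\oplus\g_{\varrho(\alpha)})$ (injectivity coming from $\g_\alpha\cap\g_{\varrho(\alpha)}=\{0\}$). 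Since the $+1$-eigenspace of an involution of a two-dimensional space is itself of dimension $1=\dim\g_\alpha$, the map is onto; the description of the $-1$-eigenspace follows identically.

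Using $\pi_{\mathfrak{k}_\varrho}=\tfrac{1}{2}(\mathrm{id}+\sigma_\varrho)$ and $\pi_{\mathfrak{m}_\varrho}=\tfrac{1}{2}(\mathrm{id}-\sigma_\varrho)$, I would then decompose an arbitrary $X\in\mathfrak{r}_\varrho$ as a sum indexed by the pairs $\{\alpha,\varrho(\alpha)\}$ with $\alpha\in\Delta'_\varrho$, and check that the contribution of each such pair to $\pi_{\mathfrak{k}_\varrho}(X)$ takes the form $Z_\alpha+\sigma_\varrho(Z_\alpha)$ with $Z_\alpha\in\g_\alpha$, and hence lies in $\mathfrak{k}_\varrho^\C\cap(\g_\alpha\oplus\g_{\varrho(\alpha)})$; directness of the sum is automatic because distinct pairs involve disjoint root spaces. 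The analogous statement for $\pi_{\mathfrak{m}_\varrho}$ is entirely parallel. The final dimensional identities then follow from $\dim\mathfrak{r}_\varrho=|\Delta_\varrho|=2|\Delta'_\varrho|$ together with $\dim \pi_{\mathfrak{k}_\varrho}(\mathfrak{r}_\varrho)=\dim \pi_{\mathfrak{m}_\varrho}(\mathfrak{r}_\varrho)=|\Delta'_\varrho|$.

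The argument is almost entirely bookkeeping, and the only genuine verification is the fixed-point-freeness of $\varrho$ on $\Delta_\varrho$, handled above via the one-dimensionality of root spaces. The main obstacle, if there is one, is notational: making sure each element of $\g_{\varrho(\alpha)}$ is treated as $\sigma_\varrho$ applied to something in $\g_\alpha$ rather than counted as a new independent term, so that the pairing $\{\alpha,\varrho(\alpha)\}$ does not produce double-counting across pairs.
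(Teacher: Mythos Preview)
Your approach is correct and is essentially the same as the paper's: the paper does not write out a proof but simply precedes the lemma with the observation that for each $\alpha\in\Delta'_\varrho$ the involution $\sigma_\varrho$ restricts to an involution of the plane $\g_\alpha\oplus\g_{\varrho(\alpha)}$, and declares the lemma to follow. Your write-up just makes this explicit. One small wording issue: the sentence ``the $+1$-eigenspace of an involution of a two-dimensional space is itself of dimension $1$'' is not true in general (consider $\pm\mathrm{id}$); what you actually use is that your injective maps $X_\alpha\mapsto X_\alpha\pm\sigma_\varrho(X_\alpha)$ land in the $\pm1$-eigenspaces respectively, forcing each to be at least one-dimensional and hence exactly one-dimensional since they sum to the two-dimensional plane.
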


\begin{prop}\label{fundcan}
Consider the  dual basis $\{\zeta_1,\ldots,\zeta_k\}$ defined by \eqref{zes}.  Given $\xi \in \mathfrak{I}'(G^{\sigma_\varrho})$ with $\xi=\sum_{i=1}^kn_i\zeta_i$ and $n_i\geq 0$, then $P_\xi^{\sigma_\varrho}$ is a fundamental outer symmetric space with involution (conjugated to) $\sigma_\varrho$ if and only if $n_i$ is even for each $1\leq i\leq k-s$.
\end{prop}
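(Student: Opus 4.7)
By Proposition \ref{concomp}(b), $P^{\sigma_\varrho}_\xi$ is the symmetric $G$-space associated to the involution $\tau := \mathrm{Ad}(\exp\pi\xi)\circ \sigma_\varrho$, so it is a fundamental outer symmetric space precisely when $\tau$ is conjugate in $\mathrm{Aut}(\mathfrak{g})$ to $\sigma_\varrho$. Two involutions $\mathrm{Ad}(h_j)\circ\sigma_\varrho$ with $h_j\in P^{\sigma_\varrho}$ are conjugate if and only if $h_1$ and $h_2$ lie in the same $G$-orbit under $g\cdot h = g h \sigma_\varrho(g^{-1})$, i.e., in the same connected component of $P^{\sigma_\varrho}$. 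The proposition thus reduces to
\begin{equation*}
\exp(\pi\xi)\in P^{\sigma_\varrho}_e \quad\Longleftrightarrow\quad n_i\text{ is even for each }1\le i\le k-s.
\end{equation*}

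\textbf{Sufficiency.} Assume $n_i$ is even for each $i\le k-s$. For such $i$, $\zeta_i = H_i \in \mathfrak{I}(G)$, so $\exp(\pi n_i\zeta_i) = e$ and hence $\exp(\pi\xi) = \exp\bigl(\pi\sum_{i>k-s} n_i\zeta_i\bigr)$. For $i>k-s$, $\sigma_\varrho$ swaps $H_i\leftrightarrow H_{i+s}$, so $H_i-H_{i+s}\in\mathfrak{t}\cap\mathfrak{m}_\varrho$; setting $Y := \pi\sum_{i>k-s}n_i(H_i-H_{i+s}) \in \mathfrak{m}_\varrho$, the curve $t\mapsto\exp(tY)$ stays in $P^{\sigma_\varrho}_e$ (since $\exp(\mathfrak{m}_\varrho)\subset P^{\sigma_\varrho}_e$). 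A direct torus computation gives
\begin{equation*}
\exp(Y) = \exp(\pi\xi)\cdot z,\qquad z := \exp\bigl(-2\pi\sum_{i>k-s} n_i H_{i+s}\bigr)\in Z(G),
\end{equation*}
where $z$ lies in $P^{\sigma_\varrho}_e$ (trivially when $G$ is adjoint), so $\exp(\pi\xi)\in P^{\sigma_\varrho}_e$.

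\textbf{Necessity.} Suppose $n_{i_0}$ is odd for some $i_0\le k-s$. Applying the sufficiency step to the even part of $\xi$, we may assume $\xi = \sum_{i\in I_{\mathrm{odd}}}\zeta_i$ where $I_{\mathrm{odd}}\subset\{1,\ldots,k-s\}$ is non-empty and contains $i_0$. Since $\alpha_{i_0}$ is $\varrho$-fixed and $\pi_{\mathfrak{k}_\varrho}(\alpha_{i_0}) = \alpha_{i_0}$ belongs to the simple root basis of $\mathfrak{k}_\varrho$ from \eqref{bes}, the root space $\mathfrak{g}_{\alpha_{i_0}}$ lies in $\mathfrak{k}_\varrho^{\mathbb{C}}$ and $\sigma_\varrho(X_{\alpha_{i_0}}) = X_{\alpha_{i_0}}$. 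Using $\alpha_{i_0}(\xi) = \mathrm{i}$,
\begin{equation*}
\tau(X_{\alpha_{i_0}}) = \mathrm{Ad}(\exp\pi\xi)(X_{\alpha_{i_0}}) = e^{\pi\alpha_{i_0}(\xi)}X_{\alpha_{i_0}} = -X_{\alpha_{i_0}},
\end{equation*}
forcing $\mathfrak{g}_{\alpha_{i_0}}\subset\mathfrak{m}_\tau^{\mathbb{C}}$. A dimension count from \eqref{hc} over $\varrho$-fixed root contributions then shows $\dim\mathfrak{k}_\tau\neq\dim\mathfrak{k}_\varrho$; equivalently, by Murakami's Theorem \ref{murak} and Table 1, $\tau$ lies in the conjugacy class of some $\sigma_{\varrho,i}$ with $i\le k-s$, corresponding to a non-fundamental outer symmetric space. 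Therefore $\tau\not\sim\sigma_\varrho$.

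\textbf{Main obstacle.} The principal delicacy is the central-element correction in the sufficiency step: when $G$ has non-trivial centre, one must verify that $z$ actually lies in $P^{\sigma_\varrho}_e$ and not in some other component of $P^{\sigma_\varrho}\cap Z(G)$. This is trivial for adjoint $G$ but requires a separate connectedness argument in general. The necessity step's appeal to Murakami's classification streamlines what is otherwise a case-analysis (especially when $|I_{\mathrm{odd}}|>1$) confirming that the resulting involution is genuinely non-fundamental; a self-contained alternative is to compute $\dim\mathfrak{k}_\tau$ explicitly from \eqref{hc} and compare with the dimensions listed in Table 1.
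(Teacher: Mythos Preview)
Your reduction in the \textbf{Plan} contains a genuine error: you claim that two involutions $\mathrm{Ad}(h_j)\circ\sigma_\varrho$ are conjugate in $\mathrm{Aut}(\mathfrak{g})$ if and only if $h_1,h_2$ lie in the same connected component of $P^{\sigma_\varrho}$. Only the ``if'' direction holds (conjugation by $g\in G$ sends $h$ to $g\cdot_{\sigma_\varrho}h$). The converse fails already in the paper's own examples: for $G=SO(2n)$ and $\xi=\zeta_{n-1}$, one has $n_i=0$ for all $i\le k-s=n-2$, so your hypotheses for sufficiency are met; yet the proof of Theorem~\ref{classesproj} shows explicitly that $\exp(\pi\zeta_{n-1})\notin P_e^{\sigma_\varrho}$, even though $P_{\zeta_{n-1}}^{\sigma_\varrho}\cong\mathbb{R}P^{2n-1}$ is fundamental. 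Concretely, in your torus computation the central correction is $z=\exp(-2\pi H_n)=-I$, and one checks (as in that proof) that $-I\notin P_e^{\sigma_\varrho}$. So the obstacle you flag is not a delicacy but a fatal obstruction: the target statement ``$\exp(\pi\xi)\in P_e^{\sigma_\varrho}$'' is simply false under the proposition's hypotheses, and your sufficiency argument cannot be repaired along these lines.

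Your necessity argument is also incomplete: moving the single root space $\mathfrak{g}_{\alpha_{i_0}}$ from $\mathfrak{k}_\varrho^\mathbb{C}$ to $\mathfrak{m}_\tau^\mathbb{C}$ does not by itself give $\dim\mathfrak{k}_\tau\neq\dim\mathfrak{k}_\varrho$, since other root spaces could compensate; the phrase ``a dimension count then shows'' hides exactly the computation that needs to be done. The paper's proof carries this out directly: treating $SU(2n+1)$ separately (where $k-s=0$ makes the statement vacuous), it uses that the remaining fundamental outer spaces are of \emph{rank-split type} ($\Delta(\mathfrak{m}_\varrho)=\emptyset$), then applies Lemma~\ref{pmmm} to the decomposition in \eqref{pc} to obtain
\[
\dim\mathfrak{m}_\tau=\dim\mathfrak{m}_\varrho+|\Delta(\mathfrak{k}_\varrho)\cap\Delta_\xi^-|,
\]
so $\dim\mathfrak{m}_\tau=\dim\mathfrak{m}_\varrho$ iff no root of $\Delta(\mathfrak{k}_\varrho)$ takes an odd value on $\xi$, i.e.\ iff $n_i$ is even for all $i\le k-s$. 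Since Table~1 shows the fundamental outer $G$-space has strictly smallest $\dim\mathfrak{m}$ among outer symmetric $G$-spaces, this dimension equality is equivalent to $\tau$ being conjugate to $\sigma_\varrho$, and both directions follow at once.
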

\begin{proof}
There is only one class of outer symmetric $SU(2n+1)$-spaces and, in this case, the involution $\varrho$ does not fix any simple root, that is $k-s=0$. Hence the result trivially holds for $N=SU(2n+1)/SO(2n+1).$

Next we consider the remaining fundamental outer symmetric spaces, which are precisely the
symmetric spaces of \emph{rank-split type}  \cite{EMQ}, those satisfying $\Delta(\mathfrak{m}_\varrho)=\emptyset$.
For such symmetric spaces, the reductive symmetric term $\mathfrak{m}_\varrho$ satisfies  $\mathfrak{m}_\varrho=\mathfrak{t}_{\mathfrak{m}_\varrho}\oplus\pi_{\mathfrak{m}_\varrho}(\mathfrak{r}_\varrho)$. On the other hand, in view of \eqref{pc},  we have, for  $\tau=\mathrm{Ad}(\exp\pi\xi)\circ\sigma_\varrho$,
  \begin{align*}\nonumber
    \mathfrak{m}_\tau^\C&=\bigoplus \mathfrak{g}^\xi_{2i+1}\cap \mathfrak{k}^\C_\varrho\oplus \bigoplus\mathfrak{g}^\xi_{2i}\cap \mathfrak{m}^\C_\varrho\\&=
    \mathfrak{t}^\C_{\mathfrak{m}_\varrho}\oplus\!\!\! \bigoplus_{{\alpha\in \Delta(\mathfrak{k}_\varrho)\cap \Delta_\xi^-}}\!\!\! \g_\alpha\oplus
   \!\!\!  \bigoplus_{{\alpha\in \Delta'_\varrho\cap \Delta_\xi^-}} \!\!\!\mathfrak{k}^\C_\varrho\cap(\g_\alpha\oplus  \g_{\varrho(\alpha)})\oplus \!\!\!\bigoplus_{{\alpha\in \Delta'_\varrho\cap\Delta_\xi^+}} \!\!\!\mathfrak{m}^\C_\varrho\cap(\g_\alpha\oplus  \g_{\varrho(\alpha)}),
  \end{align*}
  where $\Delta_\xi^+:=\{\alpha\in\Delta|\,\mbox{$\alpha(\xi)\mathrm{i}$ is even}\}$ and  $\Delta_\xi^-:=\{\alpha\in\Delta|\,\mbox{$\alpha(\xi)\mathrm{i}$ is odd}\}$.
 Taking into account Lemma \ref{pmmm}, from this we see that $\dim \mathfrak{m}_\tau=\dim \mathfrak{m}_\varrho$ (which means, by Table 1, that $P_\xi^{\sigma_\varrho}$ is a fundamental outer symmetric space with involution conjugated to $\sigma_\varrho$) if and only if
 \begin{equation*}
 \bigoplus_{{\alpha\in \Delta(\mathfrak{k}_\varrho)\cap \Delta_\xi^-}}\!\! \g_\alpha=\{0\},\end{equation*}
  which holds if and only if $\xi=\sum_{i=1}^kn_i\zeta_i$ with $n_i$  even for each $1\leq i\leq k-s$.
  \end{proof}

\subsection{Harmonic spheres in symmetric $G$-spaces}\label{basics}

Given an involution $\sigma$ on $G$, define an involution $T_\sigma$ on $\Omega G$ by $T_\sigma(\gamma)(\lambda)=\sigma(\gamma(-\lambda)\gamma(-1)^{-1}).$ Let $\Omega^\sigma G$ be the fixed set of $T_\sigma$.
%\begin{lem}\label{base}
%  If $\gamma\in\Omega^\sigma G$, then $\gamma(-1)\in P^\sigma$.
%\end{lem}
%\begin{proof}
%  If the based loop $\gamma$ is $T_\sigma$-invariant, then $\sigma(\gamma(-\lambda)\gamma(-1)^{-1})=\gamma(\lambda)$, and evaluating at $\lambda=-1$ we get $\sigma(\gamma(-1)^{-1})=\gamma(-1)$, that is $\gamma(-1)\in P^{\sigma}$.
%\end{proof}

\begin{thm}\label{GO}\cite{EMQ,GO}
 Given $\xi\in \mathfrak{I}(G)\cap\mathfrak{k}_\sigma$, any harmonic map $\varphi:S^2\to P^{\sigma}_\xi\subset G$ admits an $T_\sigma$-invariant extended solution $\Phi:S^2\to\Omega^\sigma G$.   Conversely, given an $T_\sigma$-invariant extended solution  $\Phi$, the smooth map $\varphi=\Phi_{-1}$ from $S^2$ is harmonic and takes values  in some connected component of $P^{\sigma}$.
\end{thm}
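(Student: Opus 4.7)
The plan is to treat the two implications separately, with the converse essentially an evaluation and the forward direction requiring a symmetrization of an auxiliary extended solution.

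For the converse, I would evaluate the invariance condition $\Phi(\lambda) = \sigma(\Phi(-\lambda)\Phi(-1)^{-1})$ at $\lambda = -1$. Combining with $\Phi(1) = e$, this gives $\Phi(-1) = \sigma(\Phi(-1)^{-1})$, so $\varphi := \Phi_{-1}$ satisfies $\sigma(\varphi) = \varphi^{-1}$ and therefore lands in $P^\sigma$. Continuity of $\Phi$ on the connected surface $S^2$ forces $\varphi(S^2)$ into a single connected component of $P^\sigma$. Harmonicity of $\varphi$ as a $G$-valued map is the standard consequence of $\Phi$ being an extended solution, and since each connected component of $P^\sigma$ is totally geodesically embedded in $G$ by Proposition \ref{concomp}, $\varphi$ is harmonic as a map into that component as well.

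For the forward direction, let $\Phi_0:S^2\to\Omega G$ be an arbitrary extended solution with $(\Phi_0)_{-1}=\varphi$, whose existence is guaranteed by Uhlenbeck's theorem applied to $\varphi$ viewed as a harmonic map into $G$ through the totally geodesic embedding $P^\sigma_\xi\subset G$. A direct computation, using $\sigma(\varphi) = \varphi^{-1}$, shows that $T_\sigma(\Phi_0)$ is again a frame for the Uhlenbeck family at $\varphi$, so $\Phi_0$ and $T_\sigma(\Phi_0)$ differ by multiplication by a constant loop $c\in\Omega G$. The involutivity $T_\sigma^2=\mathrm{id}$ then yields the cocycle identity $c\cdot T_\sigma(c)=e$. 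Solving the coboundary equation $c = T_\sigma(h)^{-1}h$ for a suitable constant loop $h$ (a ``square root'' extraction in the ambient abelian subgroup of constant loops) and setting $\Phi:=\Phi_0\,h$ produces the desired $T_\sigma$-invariant extended solution with $\Phi_{-1}=\varphi$, after a final translation by an element of $P^\sigma$ to match the base-point value $\exp(\pi\xi)$.

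The main obstacle is precisely the cohomological square-root step: finding $h$ so that both the based condition $\Phi(1)=e$ and the matching $\Phi_{-1}=\varphi\in P^\sigma_\xi$ are simultaneously preserved. This is where the hypothesis $\xi\in\mathfrak{I}(G)\cap\mathfrak{k}_\sigma$ plays its role, by guaranteeing $\exp(\pi\xi)\in P^\sigma$ so that the base-point normalization is compatible with the $T_\sigma$-symmetry. The required cocycle-coboundary calculation is essentially finite-dimensional (since $c$ lies in a well-understood abelian subgroup of $\Omega G$) and is worked out carefully in \cite{EMQ,GO}; I would follow their treatment for this technical step.
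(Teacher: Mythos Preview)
The paper does not supply a proof of Theorem~\ref{GO}; it is stated as a citation to \cite{EMQ,GO} and used as input for the subsequent results. So there is no ``paper's own proof'' to compare against, and your decision to defer the technical heart of the forward direction to those references is consistent with what the paper itself does.

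That said, one point in your sketch deserves a warning. The involution $T_\sigma$ is \emph{not} a group homomorphism of $\Omega G$: for a constant loop $c$ and an arbitrary $\Phi$ one has
\[
T_\sigma(c\Phi)(\lambda)=\sigma\bigl(c(-\lambda)\bigr)\,T_\sigma(\Phi)(\lambda)\,\sigma\bigl(c(-1)\bigr)^{-1},
\]
so applying $T_\sigma$ to the identity $T_\sigma(\Phi_0)=c\,\Phi_0$ does not literally yield ``$c\cdot T_\sigma(c)=e$''. The actual constraint one obtains involves conjugation by $\sigma(c(-1))$, and the square-root/coboundary step must be carried out with this twist in mind. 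Your converse direction, and the overall strategy of symmetrising an Uhlenbeck extended solution by adjusting by a constant loop, are correct; just be aware that the cocycle bookkeeping in \cite{EMQ,GO} is a bit more delicate than the abelian-looking identity you wrote.
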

%\begin{proof}
%Let $\tilde{\Phi}:S^2\to \Omega_{\mathrm{alg}} G$  be an extended solution associated to $\varphi:S^2\to P^{\sigma}_\xi\subset G$, that is $\tilde{\Phi}_{-1}=\varphi$. We assume that for a fixed point $p\in S^2$ we have $\varphi(p)=\gamma_\xi(-1)$. Set $\gamma=\gamma_\xi\tilde\Phi(p)^{-1}$ and $\Phi=\gamma\tilde{\Phi}$. Observe that $\Phi$ is the unique algebraic extended solution satisfying
%$\Phi_{-1}=\varphi$ and $\Phi(p)=\gamma_\xi$.
%A simple computation shows that $T_\sigma(\Phi)$ is also an extended solution associated to $\varphi$ and satisfies  $T_\sigma(\Phi)(p)=\gamma_\xi$. Hence, by unicity, we conclude that
%$\Phi=T_\sigma(\Phi)$. Conversely, if $\Phi$ is $T_\sigma$-invariant, by Lemma \ref{base}, $\Phi_{-1}$ takes values in some connected component of $P^{\sigma}$.
%\end{proof}

%\begin{rem}
%When $N=G/K$ is an \emph{inner} symmetric space and $\sigma=\mathrm{Ad}(s_0)$, with $s_0\in G$ satisfying $s_0^2=e$,  one easily check that $s_0P^{\sigma}\subseteq \sqrt{e}$ and we can identify $N$ with the connected component of $\sqrt{e}=\{h\in G:\,h^2=e\}$ containing $s_0$. Under this identification, harmonic maps into $N$ correspond to extended solutions which are invariant with respect to the involution ${I}:\Omega G\to \Omega G$ given by ${I}(\gamma)(\lambda)=\gamma(-\lambda)\gamma(-1)^{-1}$. This is the point of view used in \cite{BG}.
%\end{rem}

\begin{prop}\cite{EMQ}
\label{incariance} Given $\Phi\in U_\xi^\sigma(G):=U_\xi(G)\cap \Omega^{\sigma}G$, with $\xi\in \mathfrak{I}(G)\cap \mathfrak{k}_\sigma$, set $\gamma=u_\xi\circ \Phi$.
Then  $\gamma$ takes values in $K$. By continuity, $\Phi_{-1}$ and $\gamma(-1)$ take values in the same connected component of $P^\sigma$.
%This connected component is
%$P^\sigma_\xi=\{g\exp(\pi\xi)\sigma(g^{-1})|\,g\in G\}.$
\end{prop}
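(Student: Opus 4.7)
The plan is the following. Both assertions rest on the $T_\sigma$-equivariance of the energy flow on $\Omega G$, which I verify first. The bi-invariant metric on $G$ is $\sigma$-invariant, and $\lambda\mapsto-\lambda$ is an isometry of $S^1$, so $T_\sigma$ acts as an isometry of $\Omega G$ for the natural $L^2$-metric defining the energy $E$. Hence $T_\sigma$ preserves the vector field $-\nabla E$ and commutes with its flow, and in particular with the limit map $u_\xi$. Since $\Phi$ is $T_\sigma$-invariant, so is $\gamma=u_\xi\circ\Phi$, i.e.\ $\gamma$ takes values in $\Omega_\xi(G)\cap\Omega^{\sigma}G$. (One can also check directly that $T_\sigma$ preserves the stratum $\Omega_\xi(G)$: for $\gamma=g\gamma_\xi g^{-1}$, abelianness of the image of the homomorphism $\gamma$ gives $\gamma(-\lambda)\gamma(-1)^{-1}=\gamma(\lambda)$, and $\sigma(\xi)=\xi$ forces $T_\sigma(\gamma)=\sigma(g)\gamma_\xi\sigma(g)^{-1}\in\Omega_\xi(G)$.)

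The first assertion now follows by a pointwise argument. For each $z\in S^2\setminus D$, the loop $\gamma(z)\colon S^1\to G$ is a Lie group homomorphism, so its image is abelian, and therefore
\[
\gamma(z)(-\lambda)\,\gamma(z)(-1)^{-1}=\gamma(z)(-1)\,\gamma(z)(\lambda)\,\gamma(z)(-1)^{-1}=\gamma(z)(\lambda).
\]
Combining this with the $T_\sigma$-invariance identity $\sigma\bigl(\gamma(z)(-\lambda)\gamma(z)(-1)^{-1}\bigr)=\gamma(z)(\lambda)$ yields $\sigma(\gamma(z)(\lambda))=\gamma(z)(\lambda)$, i.e.\ $\gamma(z)(\lambda)\in G^{\sigma}=K$ for every $\lambda$.

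For the second assertion, let $\Phi^t$ denote the integral curve of $-\nabla E$ with $\Phi^0=\Phi$, so that $\Phi^t\to\gamma$ as $t\to\infty$. By the equivariance established in the first paragraph, every $\Phi^t$ remains $T_\sigma$-invariant; evaluating the $T_\sigma$-invariance identity at $\lambda=-1$ (as in Theorem \ref{GO}) shows that $\Phi^t_{-1}$ takes values in $P^\sigma$. For each fixed $z\in S^2\setminus D$, the curve $t\mapsto\Phi^t_{-1}(z)$ is therefore a continuous path in $P^\sigma$ joining $\Phi_{-1}(z)$ to $\gamma(z)(-1)$, placing these two points in a common connected component. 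Since $S^2$ is connected and both $\Phi_{-1}\colon S^2\to P^\sigma$ and the continuous extension of $\gamma(-1)$ are continuous, each image lies in a single component of $P^\sigma$, and these components must coincide.

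The only delicate step is the $T_\sigma$-equivariance of the energy flow; this is the same symmetry principle that underlies the existence of $T_\sigma$-invariant extended solutions in Theorem \ref{GO}, so no genuinely new ingredient is required.
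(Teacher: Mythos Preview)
The paper does not supply its own proof of this proposition; it is stated with the citation \cite{EMQ} and used without further argument. Your proof is correct and follows precisely the line one would expect (and essentially the one in \cite{EMQ}): $T_\sigma$ is an isometry of $\Omega G$ preserving the energy, hence commutes with the gradient flow and with the limit map $u_\xi$; the $T_\sigma$-invariance of a homomorphism $\gamma$ then forces $\sigma\circ\gamma=\gamma$ via the abelianness of its image; and the flow lines provide the continuous path in $P^\sigma$ connecting $\Phi_{-1}$ to $\gamma(-1)$.

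One small remark on presentation: the proposition as stated concerns a single element $\Phi\in U_\xi^\sigma(G)$ rather than a map from $S^2$, so the phrases ``for each $z\in S^2\setminus D$'' and the appeal to connectedness of $S^2$ are extraneous. The argument is pointwise in $\Phi$ and needs no domain at all; simply drop the $z$'s and the final sentence about $S^2$.
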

%\begin{proof}
%  Since the energy $E$ is a $T_\sigma$-invariant function on $\Omega_{\mathrm{alg}}G$, the flow $-\nabla E$ preserves $\Omega^{\sigma}G$. Then, if $\Phi\in U_\xi^{\sigma}(G)$, the loop $\gamma:=u_\xi\circ \Phi\in\Omega_\xi(G)$  is also  $T_\sigma$-invariant, that is $T_\sigma(\gamma) = \gamma$.  A simple computation shows that $\gamma$ takes values in $K$ (see  proof of Lemma 5 in \cite{EMQ}). Again, by continuity  $\Phi_{-1}$ and $\gamma(-1)$ take values in the same connected component of $P^\sigma$.
%\end{proof}

Hence, together with Theorems \ref{usd} and \ref{GO}, this implies the following.

 \begin{thm}\label{tinva}
Any harmonic map $\varphi$ from $S^2$ into a connected component of $P^{\sigma}$  admits an extended solution $\Phi:S^2\setminus D\to U_\xi^{\sigma}(G):=U_\xi(G)\cap \Omega^{\sigma}G$, for some $\xi\in \mathfrak{I}'(G)\cap\mathfrak{k}_\sigma$ and some discrete subset $D$. If $\sigma=\sigma_\varrho$ is the fundamental outer involution, then $\varphi=\Phi_{-1}$ takes values in $P_\xi^{\sigma_\varrho}$.
\end{thm}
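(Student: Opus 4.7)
The plan is to string together three results from the preceding subsections: Theorem \ref{GO} (existence of $T_\sigma$-invariant extended solutions), Theorem \ref{usd} (location in a Bruhat stratum), and Proposition \ref{incariance} (the connected component of $P^\sigma$ containing $\Phi_{-1}$). Essentially nothing new has to be invented; the work consists in chaining these three facts together and checking that the parameter $\xi$ appearing in the Bruhat stratum lies in $\mathfrak{k}_\sigma$.

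First, since $\varphi$ maps $S^2$ into the compact Lie group $G$, Uhlenbeck's finiteness theorem gives that $\varphi$ has finite uniton number. Applying Theorem \ref{GO} then yields a $T_\sigma$-invariant extended solution $\Phi: S^2 \to \Omega^\sigma G$, which may be taken with values in $\Omega_{\mathrm{alg}} G$, with $\varphi = \Phi_{-1}$. Now Theorem \ref{usd} produces $\xi \in \mathfrak{I}'(G)$ and a discrete subset $D \subset S^2$ such that $\Phi(S^2 \setminus D) \subseteq U_\xi(G)$.

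The next point is that $\xi$ can be chosen in $\mathfrak{k}_\sigma$. Choose $\mathfrak{t}$ to be $\sigma$-invariant and the Weyl chamber $\mathcal{W}$ to contain a compartment $\mathcal{W}\cap\mathfrak{t}_{\mathfrak{k}_\sigma}$, so that $\sigma(\mathcal{W})=\mathcal{W}$ and $\sigma$ restricts to a bijection of $\mathfrak{I}'(G)$. A direct computation shows $\gamma_\eta(-\lambda)\gamma_\eta(-1)^{-1}=\gamma_\eta(\lambda)$, whence $T_\sigma(\gamma_\eta)=\sigma(\gamma_\eta)=\gamma_{\sigma(\eta)}$; equivariance of $T_\sigma$ under the dressing action of $\Lambda^+_{\mathrm{alg}}G^\C$ upgrades this to $T_\sigma(U_\eta(G))=U_{\sigma(\eta)}(G)$. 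Because $\Phi$ is $T_\sigma$-invariant and the Bruhat strata are pairwise disjoint, this forces $\sigma(\xi)=\xi$, i.e.\ $\xi\in\mathfrak{I}'(G)\cap\mathfrak{k}_\sigma$, as required.

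It remains to identify, for $\sigma=\sigma_\varrho$, the connected component of $P^{\sigma_\varrho}$ containing $\varphi=\Phi_{-1}$. Write $\gamma=u_\xi\circ\Phi$; by Proposition \ref{incariance}, $\varphi$ and $\gamma(-1)$ take values in the same connected component of $P^{\sigma_\varrho}$. Since $\xi\in\mathfrak{k}_{\sigma_\varrho}$, the loop $\gamma_\xi$ itself lies in $\Omega_\xi(G)\cap\Omega K$ and is $T_{\sigma_\varrho}$-invariant, with $\gamma_\xi(-1)=\exp(\pi\xi)\in P^{\sigma_\varrho}_\xi$ by definition. The energy gradient flow preserves $T_{\sigma_\varrho}$-invariance and takes $\Phi(x)$ to $\gamma(x)$ inside the $T_{\sigma_\varrho}$-fixed part of $U_\xi(G)$, so evaluation at $\lambda=-1$ along the flow produces a continuous path inside $P^{\sigma_\varrho}$ joining $\varphi(x)$ to $\gamma(x)(-1)$ and, via the reference loop $\gamma_\xi$, to $\exp(\pi\xi)$. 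This places the common component inside $P^{\sigma_\varrho}_\xi$.

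The only delicate step is the last one: verifying that the connected component of $P^{\sigma_\varrho}$ singled out by Proposition \ref{incariance} is really $P^{\sigma_\varrho}_\xi$ and not some other component. This is where the fundamental character of $\sigma_\varrho$ enters—one uses the fact that $\gamma_\xi$ is a $T_{\sigma_\varrho}$-invariant point of $\Omega_\xi(G)$ together with the continuity of the flow, to connect $\gamma(x)(-1)$ to $\exp(\pi\xi)$ through points of $P^{\sigma_\varrho}$. Steps 1 and 2 are essentially formal consequences of the cited results; step 3 is what makes the conclusion specific to the fundamental outer involution.
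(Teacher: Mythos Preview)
Your first two steps are correct and essentially match the paper's implicit reasoning. The gap is in the third step, where you try to place $\gamma(x)(-1)$ in $P_\xi^{\sigma_\varrho}$ by a continuity argument. The gradient flow indeed joins $\varphi(x)$ to $\gamma(x)(-1)$ inside $P^{\sigma_\varrho}$ (this is Proposition~\ref{incariance}), but you then need a path from $\gamma(x)$ to $\gamma_\xi$ inside $\Omega_\xi(G)\cap\Omega^{\sigma_\varrho}G$, and the phrase ``via the reference loop $\gamma_\xi$'' is not an argument. In fact the Remark following the theorem shows that for a non-fundamental $\sigma$ the $T_\sigma$-fixed part of $\Omega_\xi(G)$ can split into several pieces (one for each compartment), so connectedness of this fixed set is exactly the issue, and you never establish it.

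The paper's argument avoids this topological question altogether by exploiting a stronger consequence of Proposition~\ref{incariance} that you do not use: $\gamma=u_\xi\circ\Phi$ actually takes values in $K=G^{\sigma_\varrho}$, not merely in $\Omega^{\sigma_\varrho}G$. Thus each $\gamma(x)$ is a homomorphism $S^1\to G^{\sigma_\varrho}$ and is therefore $G^{\sigma_\varrho}$-conjugate to some $\gamma_{\xi'}$ with $\xi'\in\mathfrak{I}'(G^{\sigma_\varrho})$. Writing $\gamma(x)=g\gamma_{\xi'}g^{-1}$ with $g\in G^{\sigma_\varrho}$ gives $\gamma(x)(-1)=g\cdot_{\sigma_\varrho}\exp(\pi\xi')\in P_{\xi'}^{\sigma_\varrho}$. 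The fundamental hypothesis then enters algebraically, through the equality $\mathfrak{I}'(G^{\sigma_\varrho})=\mathfrak{I}'(G)\cap\mathfrak{k}_{\sigma_\varrho}$: since $\gamma(x)$ is also $G$-conjugate to $\gamma_\xi$ and both $\xi,\xi'$ lie in $\mathfrak{I}'(G)$, one concludes $\xi'=\xi$. This is the missing idea in your proposal.
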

\begin{proof}
By Proposition \ref{incariance},  $\Phi$ and $\gamma:=u_\xi\circ \Phi$ take values in the same connected component of $P^{\sigma}$ when evaluated at $\lambda=-1$. Since $\gamma:S^1\to G^\sigma$ is a homomorphism,  $\gamma$ is in the $G^\sigma$-conjugacy class of $\gamma_{\xi'}$ for some $\xi'\in \mathfrak{I}'(G^\sigma)$, where $G^\sigma$ is the subgroup of $G$ fixed by $\sigma$. Consequently, $\gamma(-1)=g\gamma_{\xi'}(-1)g^{-1}=g\cdot_\sigma \gamma_{\xi'}(-1),$
for some $g\in G^{\sigma}$, which means that $\gamma(-1)$
takes values in the connected component $P_{\xi'}^{\sigma}$.
On the other hand,  $\gamma$ is in the $G$-conjugacy class of $\gamma_\xi$, with $\xi\in \mathfrak{I}'(G)\cap\mathfrak{k}_\sigma$. If $\sigma$ is the fundamental outer involution $\sigma_\varrho$, then  $\mathfrak{I}'(G^{\sigma})=\mathfrak{I}'(G)\cap\mathfrak{k}_\sigma$; and we must have $\xi=\xi'$.
\end{proof}
\begin{rem}
  If $\sigma$ is not a fundamental outer involution, each Weyl chamber $\mathcal{W}_\sigma$ in $\mathfrak{t}_{\mathfrak{k}_\sigma}$ can be decomposed into more than one compartment:
  $ \mathcal{W}_\sigma=C_1\sqcup \ldots \sqcup C_l$, where $C_1=\mathcal{W}\cap \mathfrak{t}_{\mathfrak{k}_\sigma}$ and the remaining compartments are conjugate to $C_1$ under $G$ \cite{EMQ}, that is, there exists $g_i\in G$ satisfying $C_i=\mathrm{Ad}(g_i)(C_1)$ for each $i$. Hence, if we have an extended solution $\Phi:S^2\setminus D\to U_\xi^{\sigma}(G)$ with  $\xi\in \mathfrak{I}'(G)\cap\mathfrak{k}_\sigma\subset C_1$, the corresponding harmonic map $\Phi_{-1}$ takes values in one of the connected components $P_{g_i\xi g_i^{-1}}^\sigma$.
  \end{rem}

\subsubsection{$\varrho$-canonical elements.}
Let $I$ be a subset of $\{1,\ldots,k\}$, with $k=\mathrm{rank}(\mathfrak{k}_\varrho)$, and set
$$\mathfrak{C}^\varrho_{I}=\Big\{\sum_{i=1}^k n_i \zeta_i|\, n_i\geq 0, \,\mbox{$n_j=0$ iff $j\notin I$}\Big\}.$$
Let $\xi\in\mathfrak{I}'({G}^{\sigma_\varrho})\cap \mathfrak{C}^\varrho_{I}$. We say that $\zeta$ is a \emph{$\varrho$-canonical element} of $G$ (with respect to the choice of $\mathcal{W}$)  if $\zeta$ is a maximal element of $(\mathfrak{I}'({G}^{\sigma_\varrho})\cap \mathfrak{C}^\varrho_{I},\preceq)$, that is:  if $\zeta\preceq \zeta'$ and $\zeta'\in \mathfrak{I}'({G}^{\sigma_\varrho})\cap \mathfrak{C}^\varrho_{I}$ then $\zeta=\zeta'$.

\begin{rem} When $G$ has trivial centre, the duals $\zeta_1,\ldots,\zeta_k$ belong to the integer lattice. Then, for each $I$ there exists a unique $\varrho$-canonical element, which is given by $\zeta_I=\sum_{i\in I}\zeta_i$. In this case, our definition of $\varrho$-canonical element coincides with that of $S$-canonical element in \cite{EMQ}.
\end{rem}

Now, consider a fundamental outer  involution $\sigma_\varrho$ and let $N$ be an associated outer symmetric $G$-space, that is, $N$ corresponds to an involution of $G$ of the form $\sigma_\varrho$ or $\sigma_{\varrho,i}$, with $\zeta_i$ in the conditions of Theorem \ref{murak}.
If $G$ has trivial centre, we certainly have  $\zeta_i\in \mathfrak{I}'({G}^{\sigma_\varrho})$. As a matter of fact, as we will see later, in most cases we have $\zeta_i\in \mathfrak{I}'({G}^{\sigma_\varrho})$, whether $G$ has trivial centre or not, with essentially one exception: for $G=SU(2n)$ and $N=SU(2n)/SO(2n)$. So, we will treat this case separately and assume henceforth that $\zeta_i\in \mathfrak{I}'({G}^{\sigma_\varrho})$.

\begin{rem}
  Consider the Dynkin diagram of $\mathfrak{e}_6$:
\begin{center}
\psset{xunit=1.0cm,yunit=1.0cm,algebraic=true,dotstyle=o,dotsize=3pt 0,linewidth=0.5pt,arrowsize=3pt 2,arrowinset=0.25}
\begin{pspicture*}(-0.55,-0.5)(4.82,1.30)
\psline(0,0)(1,0)
\psline(1,0)(2,0)
\psline(2,0)(3,0)
\psline(3,0)(4,0)
\psline(2,0)(2,1)
\rput[tl](-0.1,-0.16){$\alpha_1$}
\rput[tl](1.5,1.14){$\alpha_2$}
\rput[tl](0.92,-0.16){$\alpha_3$}
\rput[tl](1.93,-0.16){$\alpha_4$}
\rput[tl](2.93,-0.16){$ \alpha_5 $}
\rput[tl](3.97,-0.16){$\alpha_6$}
\begin{scriptsize}
\psdots[dotsize=4pt 0,dotstyle=*](0,0)
\psdots[dotsize=4pt 0,dotstyle=*](1,0)
\psdots[dotsize=4pt 0,dotstyle=*](2,0)
\psdots[dotsize=4pt 0,dotstyle=*](3,0)
\psdots[dotsize=4pt 0,dotstyle=*](4,0)
\psdots[dotsize=4pt 0,dotstyle=*](2,1)
\end{scriptsize}
\end{pspicture*}
\end{center}
This admits a unique nontrivial involution $\varrho$. Let $\{H_1,\ldots,H_6\}$ be the dual basis of $\Delta_0=\{\alpha_1,\ldots,\alpha_6\}$. The semi-fundamental basis $\pi_{\mathfrak{k}_\varrho}(\Delta_0)=\{\beta_1,\beta_2,\beta_3,\beta_4\}$ is given by
$\beta_1=\alpha_2$, $\beta_2=\alpha_4$, $\beta_3=\frac{\alpha_1+\alpha_6}{2}$ and $\beta_4=\frac{\alpha_3+\alpha_5}{2}$, whereas the dual basis is given by $\zeta_1=H_2$, $\zeta_2=H_4$, $\zeta_3=H_1+H_6$ and $\zeta_4=H_3+H_5$. Taking account that   the elements $H_i$ are related with the duals $\eta_i$ of the fundamental weights  by
$$\left[H_i\right]=\left[\begin{array}{cccccc} 4/3 & 1 & 5/3 & 2 & 4/3 & 2/3\\ 1 & 2 & 2 & 3 & 2 & 1\\ 5/3 & 2 & 10/3 & 4 & 8/3 & 4/3\\ 2 & 3 & 4 & 6 & 4 & 2\\ 4/3 & 2 & 8/3 & 4 & 10/3 & 5/3\\ 2/3 & 1 & 4/3 & 2 & 5/3 & 4/3 \end{array}\right]\left[\eta_i\right],$$
we see that the elements $\zeta_i$ are in the integer lattice $\mathfrak{I}'(\tilde E_6)\subset \mathfrak{I}'( E_6)$, where $\tilde E_6$ is the compact simply connected Lie group with Lie algebra $\mathfrak{e}_6$, which has centre $\mathbb{Z}_3$, and $E_6$ is the adjoint group $\tilde E_6/\mathbb{Z}_3$.
\end{rem}

Taking into account Proposition \ref{concomp}, we can identify $N$ with the connected component $P^{\sigma_\varrho}_{\zeta_i}=\exp(\pi\zeta_i)P_e^{\sigma_{\varrho,i}}$, which is a totally geodesic submanifold of $G$, via
\begin{align}
\label{tttt}
 g\cdot x_0\in N\mapsto\exp(\pi\zeta_i)g\sigma_{\varrho,i}(g^{-1})\in P^{\sigma_\varrho}_{\zeta_i}.
\end{align}
By Theorem \ref{tinva}, each harmonic map $\varphi:S^2\to N\cong P^{{\sigma_\varrho}}_{\zeta_i}$ admits a $T_{\sigma_\varrho}$-invariant extended solution with values, off a discrete set, in some unstable manifold $U_\xi(G)$, with $\xi\in \mathfrak{I}'({G}^{\sigma_\varrho})\cap \mathfrak{C}_I^\varrho$. By Theorem \ref{nor}, this extended solution can be multiplied on the left by a constant loop in order to get a normalized extended solution with values in some unstable manifold $U_\zeta(G)$ for some ${\varrho}$-canonical element $\zeta$. Hence,  if $G$ has trivial centre, the Bruhat decomposition of $\Omega_{\mathrm{alg}}G$ gives rise to  $2^k$ classes of harmonic maps into $P^{\sigma_\varrho}$, that is  $2^k$ classes of harmonic maps into \emph{all} outer symmetric $G$-spaces.

However, the normalization procedure given by Theorem \ref{nor} does not preserve $T_{\sigma_\varrho}$-invariance, and consequently, as we will see next, normalized extended solutions with values in the same unstable manifold $U_\zeta(G)$, for some $\varrho$-canonical element $\zeta$,
correspond in general to harmonic maps into different outer symmetric $G$-spaces. Hence the classification of harmonic two-spheres into outer symmetric $G$-spaces in terms of $\varrho$-canonical elements is manifestly unsatisfactory since it does not distinguishes the underlying symmetric space.
 In the following sections we overcome this weakness by establishing a classification of all such harmonic maps in terms of pairs $(\zeta,\sigma)$, where $\zeta$ is a ${\varrho}$-canonical element and $\sigma$ an outer involution of $G$.

\subsubsection{Normalization of $T_\sigma$-invariant extended solutions}\label{normprocedure}

Let $\sigma$ be an outer involution of $G$. The  fibre bundle morphisms $\mathcal{U}_{\xi,\xi'}$ preserve $T_{\sigma}$-invariance:
\begin{prop}\label{proposition}
 If $\xi\preceq \xi'$ and $\xi,\xi'\in  \mathfrak{I}'({G})\cap \mathfrak{k}_\sigma$, then $\mathcal{U}_{\xi,\xi'}(U_\xi^{\sigma}(G))\subset U_{\xi'}^{\sigma}(G)$.
\end{prop}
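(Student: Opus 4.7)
The plan is to exploit the ``naive'' complex extension $\hat T_\sigma\colon \Lambda G^\C \to \Lambda G^\C$, $\hat T_\sigma(\gamma)(\lambda) = \sigma(\gamma(-\lambda))$, which, unlike $T_\sigma$, is a genuine group homomorphism preserving $\Lambda^+_{\mathrm{alg}} G^\C$. First I would record how it acts on the standard homomorphisms: since $\xi,\xi'\in\mathfrak{k}_\sigma$, hence $\sigma(\xi)=\xi$ and $\sigma(\xi')=\xi'$, and since $\gamma_\zeta(-\lambda)=\exp(\pi\zeta)\gamma_\zeta(\lambda)$ for $\zeta\in\mathfrak{t}$, one computes
\[
\hat T_\sigma(\gamma_\xi)=\gamma_\xi\exp(\pi\xi),\qquad \hat T_\sigma(\gamma_{\xi'})=\gamma_{\xi'}\exp(\pi\xi'),
\]
with the constants $\exp(\pi\xi),\exp(\pi\xi')$ lying in $G\subset\Lambda^+_{\mathrm{alg}}G^\C$.

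The key technical step I would establish is the compatibility of $\hat T_\sigma$ with the Iwasawa decomposition: if $\gamma=\gamma_u\gamma_+$ is the factorization of $\gamma\in\Lambda G^\C$, then $\hat T_\sigma(\gamma)_u=T_\sigma(\gamma_u)$. This follows from the elementary identity $\sigma(\gamma_u(-\lambda))=T_\sigma(\gamma_u)(\lambda)\cdot \sigma(\gamma_u(-1))$, which displays $\hat T_\sigma(\gamma)=T_\sigma(\gamma_u)\cdot\bigl[\sigma(\gamma_u(-1))\,\hat T_\sigma(\gamma_+)\bigr]$ as a factorization with $\Omega G$-part $T_\sigma(\gamma_u)$ and $\Lambda^+_{\mathrm{alg}}G^\C$-part $\sigma(\gamma_u(-1))\,\hat T_\sigma(\gamma_+)$, so uniqueness of the Iwasawa decomposition does the rest. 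Applying this to $\gamma=\Psi\gamma_\eta$, for $\Psi\in\Lambda^+_{\mathrm{alg}}G^\C$ and $\eta\in\mathfrak{I}'(G)\cap\mathfrak{k}_\sigma$, and noting that the constant $\exp(\pi\eta)$ produced by $\hat T_\sigma(\gamma_\eta)$ is absorbed into the positive factor without affecting the unitary part, I obtain the key identity
\[
T_\sigma(\Psi\cdot\gamma_\eta)=\hat T_\sigma(\Psi)\cdot\gamma_\eta.
\]

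The proof then runs as follows. Write $\Phi=\Psi\cdot\gamma_\xi$ with $\Psi\in\Lambda^+_{\mathrm{alg}}G^\C$, and set $\Phi'=\mathcal{U}_{\xi,\xi'}(\Phi)=\Psi\cdot\gamma_{\xi'}$. Combining the key identity (with $\eta=\xi$) with the hypothesis $T_\sigma(\Phi)=\Phi$ yields $\hat T_\sigma(\Psi)\cdot\gamma_\xi=\Psi\cdot\gamma_\xi$, so $\Psi$ and $\hat T_\sigma(\Psi)$ represent the same element of the dressing orbit $U_\xi(G)$. Because $\mathcal{U}_{\xi,\xi'}$ is well defined as a map on $U_\xi(G)$---which is exactly where the hypothesis $\xi\preceq\xi'$ is used---this forces $\hat T_\sigma(\Psi)\cdot\gamma_{\xi'}=\Psi\cdot\gamma_{\xi'}=\Phi'$. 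Applying the key identity one more time, now with $\eta=\xi'$, gives $T_\sigma(\Phi')=\hat T_\sigma(\Psi)\cdot\gamma_{\xi'}=\Phi'$, so $\Phi'\in U_{\xi'}^{\sigma}(G)$, as required.

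The main obstacle is precisely the Iwasawa compatibility of the second paragraph: $\hat T_\sigma$ is multiplicative on $\Lambda G^\C$ but fails to preserve $\Omega G$, the discrepancy being the constant $\sigma(\gamma_u(-1))$. The whole argument rests on recognising that this discrepancy is a constant element of $G\subset\Lambda^+_{\mathrm{alg}}G^\C$, and can therefore be harmlessly swallowed into the positive factor; everything after that is bookkeeping that transfers $T_\sigma$-invariance across Bruhat cells via the well-definedness of $\mathcal{U}_{\xi,\xi'}$.
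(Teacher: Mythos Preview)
Your proof is correct and follows essentially the same route as the paper's: the paper also writes $\Phi=\Psi\cdot\gamma_\xi$, asserts that $T_\sigma$-invariance translates into $\Psi(\lambda)\cdot\gamma_\xi=\sigma(\Psi(-\lambda))\cdot\gamma_\xi$, invokes well-definedness of $\mathcal{U}_{\xi,\xi'}$ to get the same equality with $\gamma_{\xi'}$, and reads off $T_\sigma$-invariance of $\Psi\cdot\gamma_{\xi'}$. The difference is purely one of detail: the paper states the key identity $T_\sigma(\Psi\cdot\gamma_\eta)=\hat T_\sigma(\Psi)\cdot\gamma_\eta$ without justification, whereas you actually prove it by tracking the constant $\sigma(\gamma_u(-1))$ through the Iwasawa decomposition.
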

\begin{proof} For $\Phi\in U^{\sigma}_\xi(G)$, write $\Phi=\Psi\cdot \gamma_\xi$ for some $\Psi\in \Lambda^+_{\mathrm{alg}}G^\C$.
If $\Phi$ is $T_\sigma$-invariant we have
$\Psi(\lambda)\cdot \gamma_\xi=\sigma(\Psi(-\lambda))\cdot \gamma_\xi.$ Consequently, we also have $\Psi(\lambda)\cdot \gamma_{\xi'}=\sigma(\Psi(-\lambda))\cdot \gamma_{\xi'},$
which means in turn that  $\mathcal{U}_{\xi,\xi'}(\Phi)=\Psi\cdot \gamma_\xi'$ is $T_{\sigma}$-invariant.
\end{proof}

Hence, if $\Phi:S^2\setminus D\to U^{\sigma}_\xi(G)$ is an extended solution and $\xi\preceq\xi'$, with $\xi,\xi'\in\mathfrak{I}'(G)\cap \mathfrak{k}_{_\varrho}$, by Theorem \ref{nor} and Proposition \ref{proposition} we know that $\gamma^{-1}:=\mathcal{U}_{\xi,\xi-\xi'}(\Phi)$ is a constant $T_{\sigma}$-invariant loop if $\g_0^\xi=\g_0^{\xi'}$. However, in general, the product $\gamma \Phi$ is not $T_{\sigma}$-invariant.

\begin{lem}\label{poca}
  Assume that $\gamma^{-1},\Phi\in \Omega^{\sigma} G$ and $\gamma(-1)\in P_{\xi}^{\sigma}$ for some $\xi\in\mathfrak{I}(G)\cap \mathfrak{k}_\sigma$.
  Take $h\in G$ such that $\gamma(-1)=h^{-1}\cdot_\sigma \exp(\pi\xi)$.
  Then $h\gamma\Phi h^{-1}\in\Omega^{\tau} G$, with $\tau=\mathrm{Ad}(\exp \pi\xi)\circ \sigma$.
  \end{lem}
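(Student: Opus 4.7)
The plan is to verify directly that $\Psi := h\gamma\Phi h^{-1}$ is fixed by $T_\tau$, i.e.\ that $\tau(\Psi(-\lambda)\Psi(-1)^{-1}) = \Psi(\lambda)$. Basedness is immediate since $\Psi(1) = h\gamma(1)\Phi(1)h^{-1} = e$, so everything reduces to an algebraic identity in the loops.

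First I would unpack the hypotheses as functional identities. From $T_\sigma(\Phi) = \Phi$ one reads off
\[
\Phi(-\lambda)\Phi(-1)^{-1} = \sigma(\Phi(\lambda)),
\]
while running the same computation on $\gamma^{-1}$ (and applying $\sigma$ once more to both sides, since $\sigma$ is an involution) gives the key identity
\[
\gamma(-\lambda) = \gamma(-1)\,\sigma(\gamma(\lambda)).
\]
Note in particular that setting $\lambda = -1$ in the second identity recovers $\sigma(\gamma(-1)) = \gamma(-1)^{-1}$, consistent with $\gamma(-1) \in P^\sigma$.

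Second, I would substitute both identities into the expression $\Psi(-\lambda)\Psi(-1)^{-1} = h\gamma(-\lambda)\Phi(-\lambda)\Phi(-1)^{-1}\gamma(-1)^{-1}h^{-1}$ to get the factorization
\[
\Psi(-\lambda)\Psi(-1)^{-1} = h\,\gamma(-1)\,\sigma(\gamma(\lambda)\Phi(\lambda))\,\gamma(-1)^{-1}h^{-1}.
\]
Applying $\tau = \mathrm{Ad}(\exp\pi\xi) \circ \sigma$ and using $\sigma(\xi) = \xi$ to commute $\sigma$ past $\exp(\pi\xi)$, this becomes
\[
\exp(\pi\xi)\,\sigma(h)\,\sigma(\gamma(-1))\,\gamma(\lambda)\Phi(\lambda)\,\sigma(\gamma(-1))^{-1}\sigma(h)^{-1}\exp(-\pi\xi).
\]

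The third step is the boundary calculation that collapses the outer factors. Using $\sigma(\gamma(-1)) = \gamma(-1)^{-1}$ together with the defining relation $\gamma(-1) = h^{-1}\exp(\pi\xi)\sigma(h)$, a direct substitution gives
\[
\exp(\pi\xi)\sigma(h)\sigma(\gamma(-1)) = \exp(\pi\xi)\sigma(h)\sigma(h)^{-1}\exp(-\pi\xi)h = h,
\]
and symmetrically $\sigma(\gamma(-1))^{-1}\sigma(h)^{-1}\exp(-\pi\xi) = h^{-1}$. The whole expression therefore reduces to $h\gamma(\lambda)\Phi(\lambda)h^{-1} = \Psi(\lambda)$, as required.

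The only delicate point is getting the $T_\sigma$-identity for $\gamma^{-1}$ in the correct form (the factor $\gamma(-1)$ has to appear on the correct side so as to combine with $\sigma(\Phi(\lambda))$ into $\sigma(\gamma(\lambda)\Phi(\lambda))$); once that and the $P^\sigma$-identity for $\gamma(-1)$ are in hand, the computation is a telescoping substitution driven entirely by $\gamma(-1) = h^{-1}\exp(\pi\xi)\sigma(h)$.
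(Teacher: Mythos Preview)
Your proof is correct and follows essentially the same approach as the paper's, just with the details fully spelled out. The paper condenses your first two steps into the single intermediate identity $T_{\sigma}(\gamma \Phi)=\gamma(-1)^{-1}\gamma\Phi\,\gamma(-1)$ and then simply asserts that conjugation by $h$ yields $T_\tau$-invariance; your computation is precisely the unpacking of that assertion.
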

\begin{proof}
Since $\gamma^{-1},\Phi\in \Omega^\sigma G$,   a simple computation shows that $T_{\sigma}(\gamma \Phi)=\gamma(-1)^{-1}\gamma\Phi\gamma(-1).$ Since $\gamma(-1)\in P_{\xi}^{\sigma}$, there exists $h\in G$ such that
$\gamma(-1)=h^{-1}\cdot_\sigma \exp(\pi\xi)=h^{-1}\exp(\pi\xi)\sigma(h)$.  One can check now that $T_\tau(h\gamma \Phi h^{-1})=h\gamma \Phi h^{-1}$.
\end{proof}

\begin{prop}\label{norminf}
   Take
$\xi,\xi '\in \mathfrak{I}'({G})\cap \mathfrak{k}_\sigma$ such that $\xi\preceq \xi'$.
Let  $\Phi:S^2\setminus D\to U^{\sigma}_\xi(G)$ be a $T_{\sigma}$-invariant extended solution.
If $\gamma^{-1}:=\mathcal{U}_{\xi,\xi-\xi'}(\Phi)$ is a constant loop,  there exists $h\in G$ such that
$\tilde{\Phi}:=h\gamma\Phi h^{-1}$ takes values in $U_{\xi'}^\tau(G)$,
with
$\tau=\mathrm{Ad}(\exp\pi(\xi-\xi'))\circ {\sigma}.$

Additionally, if $\sigma$ is the fundamental outer involution $\sigma_\varrho$,
the harmonic map $\Phi_{-1}$ takes values in $P_\xi^{\sigma}$ and  $\tilde{\Phi}_{-1}$ takes values in $P_{\xi'}^{\tau}$, which implies that $\Phi_{-1}$ is given, up to isometry, by
$$\exp(\pi(\xi-\xi'))\tilde{\Phi}_{-1}:S^2\to P_\xi^\sigma.$$
\end{prop}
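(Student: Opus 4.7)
My plan is to combine Theorem \ref{nor} (which gives $\gamma\Phi\in U_{\xi'}(G)$) with Lemma \ref{poca} (which converts $\gamma\Phi$ into a $T_\tau$-invariant object after conjugation by a suitable $h\in G$); the new input is the identification of $h$ and the subsequent tracking of $\tilde{\Phi}_{-1}$ through the connected components of $P^\sigma$ and $P^\tau$. First, Proposition \ref{proposition} applied to $\mathcal{U}_{\xi,\xi-\xi'}$ (legitimate because $\xi-\xi'\in\mathfrak{I}'(G)\cap\mathfrak{k}_\sigma$ and $\xi\preceq\xi-\xi'$) shows $\gamma^{-1}\in U^\sigma_{\xi-\xi'}(G)$, so in particular $\gamma^{-1}\in\Omega^\sigma G$. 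The crux is then to locate $\gamma(-1)$ in the specific component $P^\sigma_{\xi-\xi'}$, so as to meet the hypothesis of Lemma \ref{poca} with its ``$\xi$'' equal to our $\xi-\xi'$. Viewing $\gamma^{-1}$ as a constant map $S^2\to U^\sigma_{\xi-\xi'}(G)$, Proposition \ref{incariance} places $\gamma^{-1}(-1)$ in the same connected component of $P^\sigma$ as $u_{\xi-\xi'}(\gamma^{-1})(-1)$; and since $u_{\xi-\xi'}(\gamma^{-1})$ is an $S^1$-homomorphism into $G^\sigma$ lying in the $G$-conjugacy class $\Omega_{\xi-\xi'}(G)$, it is actually a $G^\sigma$-conjugate of $\gamma_{\xi-\xi'}$ (automatic in the fundamental case from $\mathfrak{I}'(G^{\sigma_\varrho})=\mathfrak{I}'(G)\cap\mathfrak{k}_{\sigma_\varrho}$), whence its value at $\lambda=-1$ is $k\cdot_\sigma\exp(\pi(\xi-\xi'))\in P^\sigma_{\xi-\xi'}$ for some $k\in G^\sigma$.

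Lemma \ref{poca} then delivers $h\in G$ with $\tilde{\Phi}\in\Omega^\tau G$. To verify the remaining inclusion $\tilde{\Phi}\in U_{\xi'}(G)$, I write $\gamma\Phi=\Psi'\cdot\gamma_{\xi'}$ with $\Psi'\in\Lambda^+_{\mathrm{alg}}G^\C$ and observe that conjugation by the constant loop $h\in G\subset\Lambda^+_{\mathrm{alg}}G^\C$ yields $\tilde{\Phi}=(h\Psi'h^{-1})\cdot(h\gamma_{\xi'}h^{-1})$, which remains in the $\Lambda^+_{\mathrm{alg}}G^\C$-orbit $U_{\xi'}(G)$ since $h\gamma_{\xi'}h^{-1}\in\Omega_{\xi'}(G)\subset U_{\xi'}(G)$. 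Combined with the first step, this gives $\tilde{\Phi}\in U^\tau_{\xi'}(G)$.

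For the second assertion, Theorem \ref{tinva} places $\Phi_{-1}\in P^\sigma_\xi$ immediately when $\sigma=\sigma_\varrho$. The stated isometric identification then reduces to a one-line substitution: using $\gamma(-1)=h^{-1}\exp(\pi(\xi-\xi'))\sigma(h)$ (from Lemma \ref{poca}) together with $\exp(2\pi(\xi-\xi'))=e$, I compute
\[
\exp(\pi(\xi-\xi'))\,\tilde{\Phi}_{-1}=\exp(\pi(\xi-\xi'))\,h\,\gamma(-1)\,\Phi_{-1}\,h^{-1}=\sigma(h)\,\Phi_{-1}\,h^{-1}=\sigma(h)\cdot_\sigma\Phi_{-1},
\]
where the last equality uses $\sigma(\sigma(h)^{-1})=h^{-1}$. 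Since $G$ acts on $P^\sigma_\xi$ by isometries via $\cdot_\sigma$, this exhibits $\Phi_{-1}$ and $\exp(\pi(\xi-\xi'))\tilde{\Phi}_{-1}$ as two points of $P^\sigma_\xi$ related by an ambient isometry; and Proposition \ref{concomp}(c), applied with the roles of $\sigma$ and $\tau$ reversed, gives $P^\sigma_\xi=\exp(\pi(\xi-\xi'))P^\tau_{\xi'}$, whence $\tilde{\Phi}_{-1}\in P^\tau_{\xi'}$.

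The main obstacle I expect is precisely the component-tracking step above: verifying that $u_{\xi-\xi'}(\gamma^{-1})(-1)$ lands in $P^\sigma_{\xi-\xi'}$ and not in some other component of $P^\sigma$. This is where the subtle interplay between $G$- and $G^\sigma$-conjugacy classes of $S^1$-homomorphisms enters, and it is exactly the point at which the confinement to $\mathfrak{k}_\sigma$ (and, in the fundamental case, the equality $\mathfrak{I}'(G^{\sigma_\varrho})=\mathfrak{I}'(G)\cap\mathfrak{k}_{\sigma_\varrho}$) is doing the real work; once past it, everything else is a formal manipulation with the dressing action and the $\cdot_\sigma$-action.
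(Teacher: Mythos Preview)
Your proof is correct and follows essentially the same route as the paper. Both proofs (i) use Proposition~\ref{proposition} to get $\gamma^{-1}\in\Omega^\sigma G$, (ii) locate $\gamma(-1)$ in $P^\sigma_{\xi-\xi'}$ via the $G^\sigma$-conjugacy argument (the paper does this by choosing an abstract $\eta$ and then invoking $\mathfrak{I}'(G^{\sigma_\varrho})=\mathfrak{I}'(G)\cap\mathfrak{k}_{\sigma_\varrho}$ to force $\eta=\xi-\xi'$; you do it by explicitly applying Proposition~\ref{incariance} to the constant map $\gamma^{-1}$), (iii) apply Lemma~\ref{poca}, and (iv) verify the component statement for $\tilde\Phi_{-1}$ by a direct computation combined with Proposition~\ref{concomp}(c). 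Your final computation $\exp(\pi(\xi-\xi'))\tilde\Phi_{-1}=\sigma(h)\cdot_\sigma\Phi_{-1}$ is a neat repackaging of the paper's computation of $\tilde\Phi_{-1}$ itself, but the content is identical. One small remark: your opening appeal to Theorem~\ref{nor} for $\gamma\Phi\in U_{\xi'}(G)$ is slightly loose, since that theorem carries the extra hypothesis $\g_0^\xi=\g_0^{\xi'}$; however, you do supply the needed argument directly later (writing $\gamma\Phi=\Psi'\cdot\gamma_{\xi'}$ and conjugating), so there is no gap.
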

\begin{proof}
 Assume that $\gamma^{-1}:=\mathcal{U}_{\xi,\xi-\xi'}(\Phi)=\Psi\cdot \gamma_{\xi-\xi'}$ is a constant loop.
We can write $\Psi\gamma_{\xi-\xi'}=\gamma^{-1} b$ for some $b:S^2\setminus D\to \Lambda^+_{\mathrm{alg}}G$. Then
  $$\Phi =\Psi\cdot\gamma_\xi =\Psi\cdot\gamma_{\xi-\xi'}\gamma_{\xi'}= \gamma^{-1} b\cdot\gamma_{\xi'},$$
  which implies that $\gamma\Phi$ takes values  in $U_{\xi'}(G)$. On the other hand, since $\gamma^{-1}$ is $T_\sigma$-invariant (by Proposition \ref{proposition}), $\gamma(-1)\in P^{\sigma}$.

   Take $\eta\in \mathfrak{I}'(G^\sigma)$ and $h\in G$ such that $\gamma(-1)\in P^{\sigma}_{\eta}$ and $\gamma(-1)=h^{-1}\cdot_\sigma\exp\pi\eta$. From  Lemma \ref{poca}, we see that $\tilde\Phi:= h\gamma\Phi h^{-1}$ is $T_\tau$-invariant. Hence $\tilde{\Phi}$ takes values in $U_{\xi'}^\tau(G)$.
 Since $\gamma$ is constant, $\tilde\Phi$ is an extended solution.

If  $\sigma=\sigma_\varrho$, then $\mathfrak{I}'(G^{\sigma_\varrho})=\mathfrak{I}'(G)\cap \mathfrak{k}_{\sigma_\varrho}$, which implies that $\eta=\xi-\xi'$. The element $h\in G$ is such that $$\gamma(-1)=h^{-1}\exp(\pi(\xi-\xi'))\sigma_\varrho(h).$$ On the other hand, since, by Theorem \ref{tinva}, $\Phi_{-1}$
 takes values in $P_\xi^{\sigma_\varrho}$, we also have $\Phi_{-1}=g\exp(\pi\xi)\sigma_\varrho(g^{-1})$ for some lift $g:S^2\to G$. Hence
 \begin{align*}
 \tilde\Phi_{-1}&=h\gamma(-1)\Phi_{-1} h^{-1}=\exp(\pi(\xi-\xi'))\sigma_\varrho(h)g\exp(\pi\xi)\sigma_\varrho(\sigma_\varrho(h)g)^{-1}\\&=\exp(\pi(\xi-\xi'))(\sigma_\varrho(h)g\cdot_{\sigma_\varrho} \exp\pi\xi)
 \end{align*}
Hence, in view of Proposition \ref{concomp}, $\tilde\Phi_{-1}$ takes values in $P_{\xi'}^\tau=\exp(\pi(\xi-\xi'))P_{\xi}^\sigma$.
\end{proof}

Under some conditions on $\xi\preceq \xi'$, the morphism $\mathcal{U}_{\xi,\xi-\xi'}(\Phi)$ is always a constant loop.
\begin{prop}\label{norm2}
 Take
$\xi,\xi '\in \mathfrak{I}'({G})\cap \mathfrak{k}_\sigma$ such that $\xi\preceq \xi'$.
Assume that
 \begin{align}
 \g_{2i}^\xi\cap \mathfrak{m}_\sigma^\C  \subset \bigoplus_{0\leq j<2i}\g^{\xi-\xi'}_j,\quad\quad
 \g_{2i-1}^\xi\cap \mathfrak{k}_\sigma^\C  \subset \bigoplus_{0\leq j<2i-1}\g^{\xi-\xi'}_j,\label{toing2}
 \end{align}
  for all $i> 0$.
 Then, $\mathcal{U}_{\xi,\xi-\xi'}:U_\xi^\sigma(G)\to U_{\xi-\xi'}^\sigma(G)$ transforms $T_\sigma$-invariant extended solutions in constant loops.
 \end{prop}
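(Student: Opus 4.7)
Set $\zeta:=\xi-\xi'$ and $\tilde\Phi:=\mathcal{U}_{\xi,\zeta}(\Phi)$. By Propositions \ref{popo} and \ref{proposition}, $\tilde\Phi$ is a $T_\sigma$-invariant extended solution in $U_\zeta^\sigma(G)$. Uhlenbeck's normal form gives $\tilde\Phi^{-1}\tilde\Phi_z=\tfrac12(1-\lambda^{-1})\tilde A^{(1,0)}$, so $\tilde A^{(1,0)}=0$ implies $\tilde\Phi_z=0$; since $S^2$ is compact and $G$ embeds in some $U(n)$, any smooth map $S^2\to G$ annihilated by $\partial/\partial z$ is anti-holomorphic in every matrix entry and therefore constant. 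Hence proving $\tilde A^{(1,0)}=0$ is enough to conclude that $\tilde\Phi$ is a constant loop in $\Omega_{\mathrm{alg}}G$.

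Writing $\Phi=\Psi\cdot\gamma_\xi$ with $\Psi^{-1}\Psi_z=\sum_{i\ge0}X'_i\lambda^i$, the argument of Lemma \ref{poi} applied to $\tilde\Phi=\Psi\cdot\gamma_\zeta$ yields
\[\tilde A^{(1,0)}=-2\,\tilde b(0)\Bigl(\sum_{i\ge0}\pi_{i+1}^\zeta(X'_i)\Bigr)\tilde b(0)^{-1},\]
where $\pi_l^\zeta$ denotes the projection onto $\g_l^\zeta$ in the decomposition $\g^\C=\bigoplus_j\g_j^\zeta$ and $\tilde b\in\Lambda_+^{\mathrm{alg}}G^\C$ is the Iwasawa factor of $\Psi\gamma_\zeta$. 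Since the summands $\pi_{i+1}^\zeta(X'_i)$ lie in distinct eigenspaces, $\tilde A^{(1,0)}=0$ is equivalent to $\pi_{i+1}^\zeta(X'_i)=0$ for every $i\ge0$, i.e.\ to $X'_i\in\bigoplus_{j\le i}\g_j^\zeta$.

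The $T_\sigma$-invariance of $\Phi$, together with $\xi\in\mathfrak{k}_\sigma$ and a $T_\sigma$-compatible choice of the dressing lift $\Psi$, enforces the parity relation $\sigma(X'_i)=(-1)^iX'_i$, so that $X'_{2k}\in\mathfrak{k}_\sigma^\C$ and $X'_{2k-1}\in\mathfrak{m}_\sigma^\C$. Combined with \eqref{im}, this places the top piece $\pi^\xi_{i+1}(X'_i)\in\g_{i+1}^\xi$ of $X'_i$ into $\g_{i+1}^\xi\cap\mathfrak{k}_\sigma^\C$ for $i$ even and into $\g_{i+1}^\xi\cap\mathfrak{m}_\sigma^\C$ for $i$ odd, both of which sit inside $\bigoplus_{0\le j\le i}\g_j^\zeta$ by hypothesis \eqref{toing2}. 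A direct root-by-root check using $\xi\preceq\zeta$ handles the lower pieces: for a root $\alpha$ with $\g_\alpha\subset\g_l^\xi$ and $l\le i$, a positive $\alpha$ satisfies $\alpha(\zeta)\le\alpha(\xi)$, while negative roots together with the torus contribute only non-positive $\zeta$-eigenvalues, so the resulting $\g_j^\zeta$-contributions always have $j\le l\le i$. Assembling these observations, $\pi_{i+1}^\zeta(X'_i)=0$, and thus $\tilde A^{(1,0)}=0$.

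The main obstacle is the gauge-choice argument behind the parity $\sigma(X'_i)=(-1)^iX'_i$: the factorization $\Phi=\Psi\cdot\gamma_\xi$ determines $\Psi$ only up to the stabilizer of $\gamma_\xi$ under the dressing action, and one must use this freedom to produce a representative satisfying the reality condition $\sigma(\Psi(-\lambda))=\Psi(\lambda)$. Once this is secured, the remainder is a direct root-and-eigenvalue bookkeeping in which hypothesis \eqref{toing2} is precisely the condition needed to eliminate the wrong-parity contribution to the top piece of each $X'_i$.
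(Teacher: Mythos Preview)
Your proposal is correct and follows essentially the same route as the paper. Both arguments choose a lift $\Psi$ of $\Phi$ satisfying $\sigma(\Psi(-\lambda))=\Psi(\lambda)$ (the paper writes this as $T_\sigma(\Psi)=\Psi$ and, like you, simply asserts that such a choice is available), deduce the parity $X'_{2k}\in\mathfrak{k}_\sigma^\C$, $X'_{2k+1}\in\mathfrak{m}_\sigma^\C$, and then combine \eqref{im} with hypothesis \eqref{toing2} and the relation $\xi\preceq\xi-\xi'$ to kill the $\g_{i+1}^{\xi-\xi'}$-component of each $X'_i$, concluding via Lemma~\ref{poi}. The only cosmetic difference is that the paper packages your ``lower pieces'' argument as the single inclusion $\bigoplus_{j\le i}\g_j^\xi\subset\bigoplus_{j\le i}\g_j^{\xi-\xi'}$ (the definition of $\xi\preceq\xi-\xi'$), rather than arguing root by root.
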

\begin{proof}
Given an extended solution $\Phi:S^2\setminus D\to U^\sigma_\xi(G)$, choose $\Psi:S^2\setminus D \to \Lambda_{\mathrm{alg}}^+G^\C$ such that $\Phi=\Psi\cdot\gamma_\xi $ and $T_\sigma(\Psi)=\Psi$.
Differentiating this we see that
\begin{equation}\label{ondepsi}
\mathrm{Im}\Psi^{-1}\Psi_z\subset \bigoplus_{i\geq 0}\lambda^{2i}\mathfrak{k}_\sigma^\C\oplus\bigoplus_{i\geq 0}\lambda^{2i+1}\mathfrak{m}_\sigma^\C.
\end{equation}
Write $\Psi^{-1}\Psi_z=\sum_{r\geq 0}\lambda^rX'_r$.
Since $\xi\preceq \xi-\xi'$, by Proposition \ref{popo} and Proposition \ref{proposition}, $\mathcal{U}_{\xi,\xi-\xi'}(\Phi)$ is an extended solution with values in $U^\sigma_{\xi-\xi'}(G)$. Hence, taking into account Lemma \ref{poi},  in order to prove that $\mathcal{U}_{\xi,\xi-\xi'}(\Phi)$ is constant we only have to check that the component of $X'_r$ over $\g^{\xi-\xi'}_{r+1}$ vanishes for all $r\geq 0$.

From \eqref{im} and \eqref{ondepsi} we see that, for   $r=2i$,  $X'_{2i}$ takes values in $\bigoplus_{j\leq 2i+1}  \g_{j}^\xi\cap \mathfrak{k}_\sigma^\C $. But, since $\xi\preceq \xi-\xi'$ and, by hypothesis, \eqref{toing2} holds, we have
$$\bigoplus_{j\leq 2i+1}  \g_{j}^\xi\cap \mathfrak{k}_\sigma^\C =\big(\bigoplus_{j\leq 2i}  \g_{j}^\xi\cap \mathfrak{k}_\sigma^\C\big)\oplus \big(\g_{2i+1}^\xi\cap \mathfrak{k}_\sigma^\C\big)\subset
\big(\bigoplus_{j\leq 2i}  \g_{j}^{\xi-\xi'}\cap \mathfrak{k}_\sigma^\C\big)\oplus  \bigoplus_{0\leq j<2i+1}\g^{\xi-\xi'}_j.$$
 Hence the component of $X'_{2i}$ over $\g^{\xi-\xi'}_{2i+1}$ vanishes for all $i\geq 0$. Similarly,  for   $r=2i-1$,  $X'_{2i-1}$ takes values in $\bigoplus_{j\leq 2i}  \g_{j}^\xi\cap\mathfrak{m}_\sigma^\C $, and we can check that the component of $X'_{2i-1}$ over $\g^{\xi-\xi'}_{2i}$ vanishes for all $i>0$.

 Hence $\gamma^{-1}:=\mathcal{U}_{\xi,\xi-\xi'}(\Phi)=\Psi\cdot \gamma_{\xi-\xi'}$ is a constant loop.

\end{proof}
\begin{defn}
 We say that  $\zeta\in  \mathfrak{I}'({G}^{\sigma_\varrho})\cap \mathfrak{C}^\varrho_I$ is a $\varrho$-\emph{semi-canonical} element if  $\zeta$ is of the form $\zeta=\sum_{i\in I}n_i\zeta_i$ with $1\leq n_i\leq 2m_i$, where
 $m_i$ is the least positive integer which makes $m_i\zeta_i\in\mathfrak{I}'({G}^{\sigma_\varrho})$.
\end{defn}

\begin{cor}\label{crorol}
   Take $\xi\in \mathfrak{I}'({G}^{\sigma_\varrho})\cap \mathfrak{C}^\varrho_I$, with $I\subset\{1,\ldots,k\}$.  Let  $\Phi:S^2\setminus D\to U^{\sigma_\varrho}_\xi(G)$ be a $T_{\sigma_\varrho}$-invariant extended solution, and let $\varphi:S^2\to P_\xi^{\sigma_\varrho}$ be the corresponding harmonic map.
 Then  there exist $h\in G$, a constant loop $\gamma$, and  a $\varrho$-\emph{semi-canonical} $\zeta$
 such that $\tilde\Phi:=h\gamma\Phi h^{-1}$ defined on $S^2\setminus D$ takes values in $U^{\sigma_\varrho}_{\zeta}(G)$. The harmonic map $\tilde\Phi_{-1}$ takes values in $P^{\sigma_\varrho}_{\zeta}= P^{\sigma_\varrho}_{\xi}$ and coincides with $\varphi$ up to isometry.
\end{cor}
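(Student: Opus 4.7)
The plan is to construct an explicit $\varrho$-semi-canonical $\zeta$ with $\xi \preceq \zeta$ lying in the same cone $\mathfrak{C}^\varrho_I$, and then apply Propositions \ref{norm2} and \ref{norminf} to normalize $\Phi$ onto $U^{\sigma_\varrho}_\zeta(G)$ while preserving both the $T_{\sigma_\varrho}$-invariance and the connected component $P^{\sigma_\varrho}_\xi$. Writing $\xi = \sum_{i \in I} n_i \zeta_i$ with $n_i \geq 1$, I would apply Euclidean division $n_i = 2m_i q_i + r_i$ with $1 \leq r_i \leq 2m_i$ and set $\zeta := \sum_{i \in I} r_i \zeta_i$. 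By construction $\zeta$ is $\varrho$-semi-canonical, lies in $\mathfrak{C}^\varrho_I$, satisfies $\xi \preceq \zeta$ (since $n_i \geq r_i$ for all $i$), and $\xi - \zeta = 2\sum_{i \in I} m_i q_i \zeta_i$ lies in $2\,\mathfrak{I}(G^{\sigma_\varrho})$. The crucial consequence of this choice is that $\exp\pi(\xi - \zeta) = \exp\bigl(2\pi \sum m_i q_i \zeta_i\bigr) = e$, so $\exp\pi\xi = \exp\pi\zeta$ and $P^{\sigma_\varrho}_\xi = P^{\sigma_\varrho}_\zeta$; moreover the involution $\tau = \mathrm{Ad}(\exp\pi(\xi-\zeta))\circ\sigma_\varrho$ that enters Proposition \ref{norminf} reduces to $\sigma_\varrho$ itself.

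Next I would apply Proposition \ref{norm2} with $\xi' = \zeta$ to show that $\gamma^{-1} := \mathcal{U}_{\xi,\xi-\zeta}(\Phi)$ is a constant $T_{\sigma_\varrho}$-invariant loop. Because each $m_i \zeta_i \in \mathfrak{I}(G^{\sigma_\varrho})$, the value $\alpha(\xi-\zeta)/\mathrm{i}$ is an even integer for every root $\alpha$, so $\g_j^{\xi-\zeta}$ vanishes whenever $j$ is odd; the content of \eqref{toing2} then reduces to bounding the $(\xi-\zeta)$-eigenvalue of any root component of an element of $\g_{2i}^\xi\cap\mathfrak{m}_{\sigma_\varrho}^\C$ or $\g_{2i-1}^\xi\cap\mathfrak{k}_{\sigma_\varrho}^\C$. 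I would verify these bounds by splitting $\Delta$ according to \eqref{deltas}, using the explicit formulas \eqref{zes} for the $\zeta_i$, and exploiting that $\xi \in \mathfrak{C}^\varrho_I$ so that the coordinates of $\xi$ and $\zeta$ are supported on the same indices. If the one-shot reduction turns out to be delicate, the argument can be iterated by replacing $\xi$ by $\xi - 2m_a \zeta_a$ one coordinate $a$ at a time; each elementary step modifies only one coefficient, making the verification of \eqref{toing2} essentially local, and the resulting conjugations $h_j \gamma_j(\,\cdot\,) h_j^{-1}$ can be composed.

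Once constancy of $\gamma^{-1}$ is established, Proposition \ref{norminf} directly produces $h \in G$ and the constant loop $\gamma$ such that $\tilde\Phi := h\gamma\Phi h^{-1}$ takes values in $U^\tau_\zeta(G) = U^{\sigma_\varrho}_\zeta(G)$ and $\tilde\Phi_{-1}$ takes values in $P^\tau_\zeta = P^{\sigma_\varrho}_\zeta = P^{\sigma_\varrho}_\xi$; since $\exp\pi(\xi-\zeta) = e$, the relation in Proposition \ref{norminf} between $\varphi = \Phi_{-1}$ and $\tilde\Phi_{-1}$ collapses to an equality modulo left translation by a fixed element of $G$ and conjugation by $h$, which together form an isometry of $G$ (with the bi-invariant metric) restricting to an isometry of $P^{\sigma_\varrho}_\xi$. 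The main obstacle I expect is precisely the verification of \eqref{toing2} for the pair $(\xi,\zeta)$: conceptually it is a bookkeeping exercise on roots, but it must be carried out carefully across the three subfamilies $\Delta(\mathfrak{k}_\varrho)$, $\Delta(\mathfrak{m}_\varrho)$, $\Delta_\varrho$ and the two shapes of duals $\zeta_i = H_i$ or $\zeta_i = H_i + H_{i+s}$ appearing in \eqref{zes}.
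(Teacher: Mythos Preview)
Your approach is correct and follows the same route as the paper: construct the $\varrho$-semi-canonical $\zeta$ by reducing each coefficient of $\xi$ modulo $2m_i$ into $\{1,\ldots,2m_i\}$, note that $\exp\pi(\xi-\zeta)=e$ so that $\tau=\sigma_\varrho$ and $P^{\sigma_\varrho}_\zeta=P^{\sigma_\varrho}_\xi$, then apply Propositions~\ref{norm2} and~\ref{norminf}.

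The one place you overcomplicate matters is the verification of \eqref{toing2}. The paper dispatches this in a single sentence, observing that \eqref{toing2} holds \emph{automatically} for any $\xi'\in \mathfrak{I}'(G^{\sigma_\varrho})\cap\mathfrak{C}^\varrho_I$ with $\xi\preceq\xi'$. The reason is elementary: since $\xi$ and $\xi'$ lie in the same cone $\mathfrak{C}^\varrho_I$, they have identical support when expressed in the basis $\{H_j\}$ (via \eqref{zes}); hence for any positive root $\alpha$, $\alpha(\xi)/\mathrm{i}>0$ forces $\alpha(\xi')/\mathrm{i}\geq 1$, and therefore $\alpha(\xi-\xi')/\mathrm{i}<\alpha(\xi)/\mathrm{i}$. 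As only positive roots contribute to $\g^\xi_j$ for $j>0$, both inclusions in \eqref{toing2} follow immediately, with no need to split into the subfamilies $\Delta(\mathfrak{k}_\varrho)$, $\Delta(\mathfrak{m}_\varrho)$, $\Delta_\varrho$, to distinguish the two shapes of $\zeta_i$, or to iterate the reduction one coordinate at a time.
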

\begin{proof}Write $\xi=\sum_{i\in I}r_i\zeta_i$, with $r_i>0$. For each $i\in I$, let $n_i$ be the unique integer number in $\{1,\ldots, 2m_i\}$ such that $n_i=r_i\mod 2m_i.$
Set $\zeta=\sum_{i\in I}n_i\zeta_i$. It is clear that $\xi\preceq \zeta$ and $\zeta \in  \mathfrak{I}'({G}^{\sigma_\varrho})\cap \mathfrak{C}^\varrho_I$.
Observe also that conditions  \eqref{toing2} hold automatically for any $\xi'\in \mathfrak{I}'({G}^{\sigma_\varrho})\cap \mathfrak{C}^\varrho_I$ satisfying $\xi\preceq\xi'$. In particular they hold for $\xi'=\zeta$. Finally, since $\xi-\zeta=2\sum_{i\in I}m_ik_i\zeta_i$ for some nonnegative integer numbers $k_i$, then $\exp{\pi(\xi-\zeta)}=e$, and the result follows from Propositions \ref{norminf} and \ref{norm2}.   \end{proof}

\subsubsection{Classification of harmonic two-spheres into outer symmetric spaces}\label{classs}

To sum up, in order to classify all harmonic two-spheres into outer symmetric spaces we proceed as follows:
\begin{enumerate}
  \item Start with  a fundamental outer   involution $\sigma_\varrho$ and let $N$ be an outer symmetric $G$-space corresponding to an involution of the form $\sigma_\varrho$ or $\sigma_{\varrho,i}$ of $G$, according to  \eqref{sigmasis}, where the element $\zeta_i$ is in the conditions of Theorem \ref{murak}.  We assume that $\exp2\pi\zeta_i=e$, that is $\zeta_i\in \mathfrak{I}'({G}^{\sigma_\varrho})$. Let $\varphi:S^2\to N$ be an harmonic map and
identify $N$ with $P_{\zeta_i}^{\sigma_\varrho}=\exp(\pi\zeta_i)P_e^{\sigma_{\varrho,i}}$ via the totally geodesic embedding \eqref{tttt}. If $N$ is the fundamental outer space  with
involution $\sigma_\varrho$ we simply identify $N$ with  $P_e^{\sigma_\varrho}$ via $\iota_{\sigma_\varrho}$.

\item By Theorem \ref{tinva}, $\varphi:S^2\to N\cong P_{\zeta_i}^{\sigma_\varrho}$ admits a $T_{\sigma_\varrho}$-invariant extended solution $\Phi:S^2\to\Omega^{\sigma_\varrho} G$ which takes values, off some discrete subset $D$, in some unstable manifold $U_{\zeta'}^{\sigma_\varrho}(G)$, with $\zeta'\in\mathfrak{I}'({G}^{\sigma_\varrho})$; moreover,  $P_{\zeta'}^{\sigma_\varrho}=P_{\zeta_i}^{\sigma_\varrho}$.

    \item  By Corollary \ref{crorol}, we can assume that  $\zeta'$ is a  $\varrho$-semi-canonical element in $\mathfrak{I}'({G}^{\sigma_\varrho})\cap \mathfrak{C}^\varrho_I$.
    If $\zeta$ is a $\varrho$-canonical element such that $\zeta'\preceq \zeta$ and $\mathcal{U}_{\zeta',\zeta'-\zeta}(\Phi)$ is constant, then,
    taking into account Proposition \ref{norminf}, there exists  a $T_\tau$-invariant extended solution $\tilde\Phi: S^2\setminus D\to U^\tau_{\zeta}(G),$  where \begin{equation}\label{otau}\tau=\mathrm{Ad}(\exp \pi(\zeta'-\zeta))\circ \sigma_\varrho,\end{equation} such that the harmonic map $\varphi$  is given, up to isometry, by $\tilde\Phi_{-1}:S^2\to P_{\zeta}^\tau.$ Here we identify
 $N$  with $P_{\zeta}^\tau =\exp(\pi(\zeta'-\zeta)) P_{\zeta_i}^{\sigma_\varrho}$ via the composition of \eqref{tttt} with the left multiplication by $\exp(\pi(\zeta'-\zeta))$.

    \item By Proposition \ref{norm2}, there always exists a $\varrho$-canonical element $\zeta$ in such conditions.
    \end{enumerate}

Hence, we classify harmonic spheres into outer symmetric $G$-spaces in terms of pairs $(\zeta,\tau)$, where $\zeta$ is a  $\varrho$-canonical element and
$\tau$ is an outer involution of the form \eqref{otau} for some $\varrho$-semi-canonical element $\zeta'$ with $\zeta'\preceq \zeta$.

%\begin{rem}
%  Let $\tau$ and $\tau'$ be two conjugated outer involutions of the form \eqref{otau} and let $\zeta$ be a $\varrho$-canonical element. We have $$\tau'=Ad(h)\circ \tau \circ Ad(h^{-1})$$ for some $h\in G$.  In general, even $\tau$ and $\tau'$ being  conjugated, $P^\tau_\varrho$ and $P^{\tau'}_{\varrho}$ are representatives of different isometry classes of symmetric spaces and, consequently,  we will consider the pairs $(\zeta,\tau)$ and $(\zeta,\tau')$  as different classes of harmonic maps, unless $\tau$ is the fundamental involution $\sigma_\varrho$. In this particular case, if  $\Phi:S^2\to U_\zeta^{\sigma_\varrho}$ is an extended solution, then we have:  $\Phi_{-1}$ takes values in $P_\zeta^{\sigma_\varrho}$; the extended solution $\Phi'=h\Phi h^{-1}$ takes values in $U_\zeta^{\tau'}$ and the corresponding harmonic map $\Phi_{-1}'=h\Phi_{-1}h^{-1}$ takes values in $P_\zeta^{\tau'}\cong P_{\zeta}^{\sigma_\varrho}$; since $\Phi_{-1}$ and $\Phi_{-1}'$ coincide up to isometry, $(\zeta,\sigma_\varrho)$ and $(\zeta,\tau')$   define the same isometry class of harmonic maps.\end{rem}

\subsubsection{Weierstrass Representation for $T_\sigma$-invariant Extended Solutions}

From \eqref{ondepsi} and Proposition \ref{BG}, we obtain the  following.

\begin{thm}\label{sigmaweirstrass}
  Let $\Phi:M\to \Omega_{\mathrm{alg}}^\sigma G$ be an extended solution.
  There exists a discrete set $D'\supseteq D$ of $M$  such that
  $\Phi{\big|_{M\setminus D'}}=\exp C\cdot \gamma_\xi$ for some  holomorphic vector-valued function $C: M\setminus D'\to (\mathfrak{u}^0_\xi)_\sigma$,
   where $(\mathfrak{u}^0_\xi)_\sigma$ is the finite dimensional nilpotent subalgebra of $\Lambda^+_{\mathrm{alg}}\mathfrak g^\C$ defined by
$$(\mathfrak{u}^0_\xi)_\sigma=\bigoplus_{0\leq 2i<r(\xi)}\lambda^{2i}(\mathfrak{p}^\xi_{2i})^\perp\cap \mathfrak{k}^\C_\sigma\oplus \bigoplus_{0\leq 2i+1<r(\xi)}\lambda^{2i+1}(\mathfrak{p}^\xi_{2i+1})^\perp\cap \mathfrak{m}^\C_\sigma,$$
with $(\mathfrak{p}^\xi_i)^\perp=\bigoplus _{i<j\leq r(\xi)}\mathfrak{g}_j^\xi$.
 Moreover, $C$ can be extended meromorphically to $M$.

  \end{thm}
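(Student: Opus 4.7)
The plan is to identify $(\mathfrak{u}^0_\xi)_\sigma$ as the fixed set of a natural involution on $\mathfrak{u}^0_\xi$ and then reduce the theorem to Proposition \ref{BG} together with the $T_\sigma$-invariance argument already used in the proof of Proposition \ref{norm2}. First I would extend $\sigma$ to a $\C$-linear involution $\tilde{\sigma}$ on the loop algebra $\Lambda^+_{\mathrm{alg}}\mathfrak{g}^\C$ by the rule $\tilde{\sigma}(X)(\lambda)=\sigma(X(-\lambda))$. For a homogeneous element $\lambda^i Y$ with $Y\in \mathfrak{g}^\xi_j$ one computes immediately $\tilde{\sigma}(\lambda^i Y)=(-1)^i\lambda^i\sigma(Y)$. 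Since $\sigma$ acts as $+1$ on $\mathfrak{k}^\C_\sigma$ and $-1$ on $\mathfrak{m}^\C_\sigma$, the fixed subspace of $\tilde{\sigma}$ inside $\mathfrak{u}^0_\xi=\bigoplus_{0\leq i<r(\xi)}\lambda^i(\mathfrak{p}^\xi_i)^\perp$ is exactly
\[
\bigoplus_{0\leq 2i<r(\xi)}\lambda^{2i}(\mathfrak{p}^\xi_{2i})^\perp\cap \mathfrak{k}^\C_\sigma\;\oplus\;\bigoplus_{0\leq 2i+1<r(\xi)}\lambda^{2i+1}(\mathfrak{p}^\xi_{2i+1})^\perp\cap \mathfrak{m}^\C_\sigma,
\]
which is the subalgebra $(\mathfrak{u}^0_\xi)_\sigma$ appearing in the statement.

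Next I would apply Proposition \ref{BG} directly to $\Phi$ to obtain a discrete set $D'\supseteq D$ and a meromorphic $C:M\to \mathfrak{u}^0_\xi$ with $\Phi|_{M\setminus D'}=\exp C\cdot \gamma_\xi$. The core of the argument is then to show that $C$ can be taken inside $(\mathfrak{u}^0_\xi)_\sigma$, or equivalently that $\tilde{\sigma}(C)=C$. Since $\sigma$ is a Lie algebra involution and $\tilde{\sigma}$ commutes with pointwise evaluation, one has $\tilde{\sigma}(\exp C)=\exp \tilde{\sigma}(C)$, and because $\mathfrak{u}^0_\xi$ is finite-dimensional nilpotent, $\exp$ is injective there. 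Hence it suffices to show that the lift $\Psi=\exp C$ can be chosen $\tilde{\sigma}$-invariant.

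For this I would argue exactly as in the proof of Proposition \ref{norm2}: since $\xi\in \mathfrak{k}_\sigma$, $\exp(\pi\xi)$ is fixed by $\sigma$ and lies in the maximal torus, hence $T_\sigma(\gamma_\xi)=\gamma_\xi$; combined with $T_\sigma(\Phi)=\Phi$ and the freeness properties of the dressing action on the orbit through $\gamma_\xi$, this permits a choice of the lift $\Psi=\exp C$ satisfying $\tilde{\sigma}(\Psi)=\Psi$. Injectivity of $\exp$ on $\mathfrak{u}^0_\xi$ then transfers this invariance to $\tilde{\sigma}(C)=C$, placing $C$ inside $(\mathfrak{u}^0_\xi)_\sigma$. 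The meromorphic extendability of $C$ to the whole of $M$ is inherited directly from Proposition \ref{BG}.

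The main obstacle is the last paragraph, namely the compatibility between the $T_\sigma$-invariant normalisation of $\Psi$ and its representation as the exponential of an element of $\mathfrak{u}^0_\xi$: one must verify that the ambiguity in the lift $\Phi=\Psi\cdot\gamma_\xi$ is resolved by the $\tilde{\sigma}$-fixed component, which in turn rests on the unique decomposition of $\Lambda^+_{\mathrm{alg}} G^\C$-orbits through $\gamma_\xi$ in terms of $\exp(\mathfrak{u}^0_\xi)$ and on the fact that $\tilde{\sigma}$ preserves both the orbit structure and the nilpotent filtration. Once this is granted, the parity-matching computation above finishes the proof.
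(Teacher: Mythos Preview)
Your approach is correct and matches the paper's proof, which is stated in a single line: ``From \eqref{ondepsi} and Proposition \ref{BG}, we obtain the following.'' You have simply unpacked what that line means. The identification of $(\mathfrak{u}^0_\xi)_\sigma$ as the fixed set of the loop-algebra involution $\tilde\sigma$, the application of Proposition~\ref{BG}, and the transfer of $T_\sigma$-invariance from $\Phi$ to $C$ via injectivity of $\exp$ on the nilpotent algebra are exactly the ingredients the paper is invoking.

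Regarding the obstacle you flag: it dissolves once you note two facts. First, since $\xi\in\mathfrak{k}_\sigma$, the involution $\sigma$ commutes with $\mathrm{ad}\,\xi$, hence preserves each $\mathfrak{g}^\xi_j$ and therefore $\tilde\sigma$ preserves $\mathfrak{u}^0_\xi$ as a Lie subalgebra. Second, the Burstall--Guest parametrisation $C\mapsto \exp C\cdot\gamma_\xi$ is a bijection from $\mathfrak{u}^0_\xi$ onto the big cell of $U_\xi(G)$ (this is precisely what underlies Proposition~\ref{BG}; the discrete set $D'$ removes the points where $\Phi$ leaves the big cell). Thus from $T_\sigma(\Phi)=\Phi$ and $T_\sigma(\gamma_\xi)=\gamma_\xi$ one gets $\exp(\tilde\sigma C)\cdot\gamma_\xi=\exp C\cdot\gamma_\xi$ with both $C$ and $\tilde\sigma C$ in $\mathfrak{u}^0_\xi$, and uniqueness forces $\tilde\sigma C=C$. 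There is no need to invoke freeness of the full dressing action --- only the injectivity of this particular chart.
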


%\begin{prop}
%Let $N=G/K$ be an outer symmetric space and $\varphi:M\to N$ an harmonic map associated to the canonical element $\xi$. If $\mathfrak{g}^\alpha\subset \mathfrak{k}^\C$ for any positive root $\alpha$ with $\alpha(\xi)$ even, then $\varphi$ takes values in a proper totally geodesic submanifold of $N$.
%\end{prop}
%\begin{proof}
%
%\end{proof}
%
%
%\begin{thm}
%  Let $N=G/K$ be an outer symmetric space and $\varphi:M\to N$ an harmonic map. Assume that
%  $$\Delta':=\{\alpha\in\Delta |\, \mathfrak{g}_\alpha\nsubseteq \mathfrak{m}^\C\}.$$
%  Then $\varphi$ takes values in some inner $K$-symmetric space totally geodesically embedded
%  in $N$.
%\end{thm}

\section{Examples}\label{examples}

Next we will describe explicit examples of harmonic spheres into \emph{classical} outer symmetric spaces.

\subsection{Outer symmetric $SO(2n)$-spaces}
 For details on the structure of $\mathfrak{so}(2n)$ see \cite{fulton_harris}. Consider on $\mathbb{R}^{2n}$ the standard inner product $\langle \cdot, \cdot \rangle$ and fix a complex basis $\mathbf{u}=\{u_1,\ldots,u_n,\overline{u}_1,\ldots,\overline{u}_n\}$ of $\mathbb{C}^{2n}=(\mathbb{R}^{2n})^\mathbb{C}$
satisfying
\begin{equation*}\label{us}
  \langle u_i,u_j\rangle=0, \quad  \langle u_i,\overline{u}_j\rangle=\delta_{ij},\quad \mbox{for all $1\leq i,j\leq n$}.
\end{equation*}
 Throughout this section we will denote by  $V_l$ the $l$-dimensional isotropic subspace spanned by $\overline{u}_1,\ldots,\overline{u}_l$.

 %
% real orthonormal basis $\mathbf{e}=\{e_1,\ldots,e_{2n}\}$ of $\mathbb{R}^{2n}$. Define the complex orthonormal basis $\mathbf{u}=\{u_1,\ldots,u_n,\overline{u}_1,\ldots,\overline{u}_n\}$ of $\mathbb{C}^{2n}$ by $$\mbox{$u_j=\frac{1}{\sqrt2}(e_j+\mathrm{i}e_{n+j})$}.$$
 Set $E_i=E_{i,i}-E_{n+i,n+i}$, where $E_{j,j}$ is a square matrix, with respect to the basis $\mathbf{u}$, whose $(j,j)$-entry is $\mathrm{i}$ and all other entries are $0$. The complexification $\lt^\C$ of the algebra $\lt$ of diagonal matrices  $\sum a_iE_i$,  with $a_i\in \mathbb{R}$ and $\sum a_i=0$,
    is a Cartan subalgebra of $\mathfrak{so}(2n)^\C$. Let $\{L_1,\ldots,L_n\}$ be the dual basis in $\mathrm{i}\lt^*$ of $\{E_1,\ldots,E_n\}$, that is  $L_i(E_j)=\mathrm{i}\delta_{ij}$. The roots of $\mathfrak{so}(2n)$ are the vectors $\pm L_i\pm L_j$ and $\pm L_i\mp L_j$, with $i\neq j$ and $1\leq i,j\leq n$.

 Consider the endomorphisms
 \begin{equation}\label{geradores}
 X_{i,j}=E_{i,j}-E_{n+j,n+i},\,\,Y_{i,j}=E_{i,n+j}-E_{j,n+i},\,\, Z_{i,j}=E_{n+i,j}-E_{n+j,i},
 \end{equation}
 where $E_{i,j}$, with $i\neq j$, is a square matrix whose $(i,j)$-entry is $1$ and all other entries are $0$.
 The root spaces of $L_i-L_j$, $L_i+L_j$ and $-L_i-L_j$, respectively, are  generated by the endomorphisms $X_{i,j}$, $Y_{i,j}$ and  $Z_{i,j}$, respectively.

  Fix the positive root system
$\Delta^+=\{L_i\pm L_j\}_{i<j}.$ The positive simple roots are  $\alpha_i=L_i-L_{i+1}$, for $1\leq i\leq n-1$, and $\alpha_n=L_{n-1}+L_n$. The vectors of the dual basis $\{H_1,\ldots,H_n\}\subset \lt$  are given by $H_i  =E_1+E_2+\ldots+E_i$, for $1\leq i\leq n-2$,
$$\mbox{$H_{n-1}  =\frac12(E_1+E_2+\ldots+E_{n-1}-E_n)$, and $H_n  =\frac12(E_1+E_2+\ldots+E_{n-1}+E_n)$}.$$

Consider the non-trivial involution $\varrho$ of the corresponding Dynkin diagram,
\begin{center}
\psset{xunit=1.5cm,yunit=1.3cm,algebraic=true,dotstyle=o,dotsize=2pt 0,linewidth=0.5pt,arrowsize=5pt 2,arrowinset=0.25}
\begin{pspicture*}(-0.9,1.11)(4.85,2.76)
\psline(0,0)(1,0)
\psline(1,0)(2,0)
\psline(2,0)(3,0)
\psline(3,0)(4,0)
\psline(2,0)(1.95,0.21)
\rput[tl](-0.6,1.86){$\alpha_1$}
\rput[tl](0.43,1.84){$\alpha_2$}
\rput[tl](0.92,-0.13){$\alpha_3$}
\rput[tl](1.8,1.86){$\alpha_{n-3}$}
\rput[tl](2.7,1.86){$ \alpha_{n-2}$}
\rput[tl](3.97,-0.13){$\alpha_6$}
\psline(-0.5,2)(0.5,2)
\psline(3,2)(3.86,1.5)
\psline(3,2)(3.85,2.5)
\psline(2,2)(3,2)
\rput[tl](0.84,2){$\ldots\ldots\ldots$}
\rput[tl](3.96,1.55){$\alpha_n$}
\rput[tl](3.96,2.63){$\alpha_{n-1}$}
\begin{scriptsize}
\psdots[dotsize=4pt 0,dotstyle=*](0,0)
\psdots[dotsize=4pt 0,dotstyle=*](1,0)
\psdots[dotsize=4pt 0,dotstyle=*](2,0)
\psdots[dotsize=4pt 0,dotstyle=*](3,0)
\psdots[dotsize=4pt 0,dotstyle=*](4,0)
\psdots[dotsize=4pt 0,dotstyle=*](1.95,0.21)
\psdots[dotstyle=*,linecolor=blue](0,4)
%\rput[bl](-4.03,3.01){\blue{$G$}}
\psdots[dotsize=4pt 0,dotstyle=*](-0.5,2)
\psdots[dotsize=4pt 0,dotstyle=*](0.5,2)
\psdots[dotsize=4pt 0,dotstyle=*](3,2)
\psdots[dotsize=4pt 0,dotstyle=*](3.86,1.5)
\psdots[dotsize=4pt 0,dotstyle=*](3.85,2.5)
\psdots[dotstyle=*,linecolor=blue](-2.26,0.91)
%\rput[bl](-2.22,0.96){\blue{$M$}}
\psdots[dotsize=4pt 0,dotstyle=*](2,2)
\end{scriptsize}
\end{pspicture*}
\end{center}
This involution  fixes $\alpha_i$ if $i\leq n-2$ and $\varrho(\alpha_{n-1})= \alpha_n$.
The corresponding semi-fundamental basis
$\pi_{\mathfrak{k}_\varrho}(\Delta_0)=\{\beta_1,\ldots,\beta_{n-1}\}$ is given by
$$\mbox{$\beta_i=\alpha_i=L_i-L_{i+1}$, if $i\leq n-2$, and $\beta_{n-1}=\frac12(\alpha_{n-1}+\alpha_{n})=L_{n-1}$},$$
whereas the dual basis $\{\zeta_1,\ldots,\zeta_{n-1}\} $ is given by
$\zeta_i=E_1+\ldots+E_i$,
with $i=1,\ldots,n-1$. Since each $\zeta_i$ belongs to the integer lattice $\mathfrak{I}(SO(2n)^{\sigma_\varrho})$, we have:
\begin{prop}
  The $\varrho$-semi-canonical elements of $SO(2n)$ are precisely the elements $\zeta=\sum_{i=1}^{n-1} m_i\zeta_i$ such that $m_i\in\{0,1,2\}$ for $1\leq i\leq n-1$.
\end{prop}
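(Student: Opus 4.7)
The plan is to reduce the proposition to computing, for each $i$, the least positive integer $\mu_i$ such that $\mu_i\zeta_i\in\mathfrak{I}(SO(2n)^{\sigma_\varrho})$. By the definition of $\varrho$-semi-canonical element, once every $\mu_i$ equals $1$, the bound $1\leq n_i\leq 2\mu_i$ becomes $n_i\in\{1,2\}$ for $i\in I$; adjoining $n_i=0$ for $i\notin I$ then produces exactly the elements $\zeta=\sum_{i=1}^{n-1}m_i\zeta_i$ with $m_i\in\{0,1,2\}$. So the whole task is to verify that each $\zeta_i$ itself already lies in $\mathfrak{I}(SO(2n)^{\sigma_\varrho})=\mathfrak{I}(SO(2n))\cap\mathfrak{t}_{\mathfrak{k}_\varrho}$.

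First I would check $\zeta_i\in\mathfrak{t}_{\mathfrak{k}_\varrho}$. Because $\varrho$ fixes the simple roots $\alpha_j$ for $j\leq n-2$ and interchanges $\alpha_{n-1}$ with $\alpha_n$, the induced action on $\mathfrak{t}$ fixes $H_j$ for $j\leq n-2$ and swaps $H_{n-1}$ with $H_n$. Using the identifications $\zeta_j=H_j$ for $j\leq n-2$ and $\zeta_{n-1}=H_{n-1}+H_n$ recorded above, each $\zeta_j$ is $\sigma_\varrho$-fixed and therefore lies in $\mathfrak{t}_{\mathfrak{k}_\varrho}$.

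Second, I would verify $\exp(2\pi\zeta_i)=e$ directly in the complex basis $\mathbf{u}=\{u_1,\ldots,u_n,\overline{u}_1,\ldots,\overline{u}_n\}$: each $E_j=E_{j,j}-E_{n+j,n+j}$ acts as $\mathrm{i}$ on $u_j$, as $-\mathrm{i}$ on $\overline{u}_j$ and as $0$ on the remaining basis vectors, so $\exp(2\pi E_j)$ acts as $e^{\pm 2\pi\mathrm{i}}=1$ on every basis vector and thus equals $e$. Since the $E_j$ commute, $\exp(2\pi\zeta_i)=\prod_{j=1}^{i}\exp(2\pi E_j)=e$, whence $\zeta_i\in\mathfrak{I}(SO(2n))$. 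Together with the previous step this gives $\zeta_i\in\mathfrak{I}(SO(2n)^{\sigma_\varrho})$, so $\mu_i=1$ for every $i$, and the characterization follows at once from the definition.

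I do not expect a substantive obstacle here: the proposition is essentially a bookkeeping consequence of the explicit description of $\{\zeta_i\}$ assembled in the preceding paragraphs; the only computational content is recognising that the diagonal matrices $E_j$ already have period $2\pi$ under the exponential in $SO(2n)$, so that the minimal integer needed to push $\zeta_i$ into the lattice is $\mu_i=1$.
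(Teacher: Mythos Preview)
Your proposal is correct and follows exactly the paper's approach: the paper simply observes that each $\zeta_i$ already lies in the integer lattice $\mathfrak{I}(SO(2n)^{\sigma_\varrho})$ and states the proposition as an immediate consequence, without further argument. Your write-up merely spells out the two routine verifications (that $\zeta_i$ is $\sigma_\varrho$-fixed and that $\exp(2\pi\zeta_i)=e$) that the paper leaves implicit.
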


The fundamental outer symmetric $SO(2n)$-space is the real projective space $\mathbb{R}P^{2n-1}$, and the associated outer symmetric $SO(2n)$-spaces are the real Grassmannians $G_p(\mathbb{R}^{2n})$ with $p>1$ odd.

 \subsubsection{Harmonic maps into real projective spaces $\mathbb{R}P^{2n-1}$.}
Consider as base point  the one dimensional real vector space  $V_0$ spanned by $e_n=(u_n+\overline{u}_n)/\sqrt{2}$ in $\mathbb{R}^{2n}$, which establishes an identification of $\mathbb{R}P^{2n-1}$
with $SO(2n)/S(O(1)O(2n-1)).$
Denote by $\pi_{V_0}$ and $\pi_{V_0}^\perp$  the orthogonal projections onto $V_0$ and $V_0^\perp$, respectively. The  fundamental involution is given by $\sigma_\varrho=\mathrm{Ad}(s_0)$, where  $s_0=\pi_{V_0}-\pi_{V_0}^\perp$.
Following the classification procedure established in Section \ref{classs}, we start by identifying
 $\mathbb{R}P^{2n-1}$ with $P_e^{\sigma_\varrho}$.

\begin{thm}\label{classesproj}
  Each harmonic map  $\varphi:S^2\to\mathbb{R}P^{2n-1}$ belongs to one of the following classes: $(\zeta_l,\sigma_{\varrho,l})$, with $1\leq l\leq n-1$.
\end{thm}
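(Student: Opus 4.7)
The strategy is to follow the classification procedure of Section \ref{classs}, with $N = \mathbb{R}P^{2n-1}$ identified with $P_e^{\sigma_\varrho}$ via $\iota_{\sigma_\varrho}$. By Theorem \ref{tinva}, any harmonic $\varphi: S^2 \to \mathbb{R}P^{2n-1}$ admits a $T_{\sigma_\varrho}$-invariant extended solution $\Phi$ with values, off a discrete set, in some $U_\xi^{\sigma_\varrho}(SO(2n))$ with $\xi \in \mathfrak{I}'(SO(2n)^{\sigma_\varrho})$, and $\Phi_{-1}$ takes values in $P_\xi^{\sigma_\varrho}$; the target constraint gives $P_\xi^{\sigma_\varrho} = P_e^{\sigma_\varrho}$.

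I would first translate this component condition into an explicit parity constraint on $\xi$. Writing $\xi = \sum_{i=1}^{n-1} m_i \zeta_i = \sum_k N_k E_k$ with $N_k = \sum_{i\geq k} m_i$, the element $\exp(\pi\xi)\in SO(2n)$ is diagonal in the real basis $\{e_1,\ldots,e_{2n}\}$, acting by $(-1)^{N_k}$ on each $2$-plane $\mathrm{span}(e_k,e_{n+k})$ for $k<n$ and trivially on $\mathrm{span}(e_n,e_{2n})$. Multiplying by $s_0 = \pi_{V_0}-\pi_{V_0}^\perp$, one sees that $s_0\exp(\pi\xi)$ is a reflection through a $1$-dimensional subspace (the defining property of $P_e^{\sigma_\varrho}$) if and only if $N_k$ is even for every $k<n$; by descending induction starting from $N_{n-1}=m_{n-1}$, this forces each $m_i$ to be even. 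Applying Corollary \ref{crorol} to normalize $\xi$ to a $\varrho$-semi-canonical $\zeta' = \sum_{l\in I} n_l \zeta_l$ with $n_l\in\{1,2\}$ and $I\subseteq\{1,\ldots,n-1\}$ nonempty (which preserves $T_{\sigma_\varrho}$-invariance and the connected component), the parity condition forces $n_l = 2$ for every $l\in I$; hence $\zeta' = 2\zeta_I$ with $\zeta_I = \sum_{l\in I} \zeta_l$. Proposition \ref{norminf} now yields a $T_\tau$-invariant normalized extended solution in $U_{\zeta_I}^\tau(SO(2n))$ with canonical element $\zeta = \zeta_I$ and outer involution $\tau = \mathrm{Ad}(\exp\pi\zeta_I)\circ\sigma_\varrho$. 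When $I = \{l\}$ is a singleton, this reproduces exactly the class $(\zeta_l,\sigma_{\varrho,l})$ of the theorem.

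The key remaining step, and the main obstacle, is to prove that $I$ must be a singleton. The plan here is to invoke the classical Calabi--Eells--Wood description: every non-constant harmonic two-sphere in $\mathbb{R}P^{2n-1}$ arises from a full isotropic holomorphic curve in $\mathbb{C}P^{2n-1}$ characterized by a single isotropy order $l\in\{1,\ldots,n-1\}$, and the corresponding normalized extended solution is supported on the singleton canonical element $\zeta_l$. In uniton-theoretic language, the uniton number of the normalized extended solution with respect to the standard representation $\rho:SO(2n)\hookrightarrow U(2n)$ (whose extreme weights are $\pm L_1$) equals $r_\rho(\zeta_I) = 2 L_1(\zeta_I) = 2|I|$; matching this against the uniton number $2$ realized by the Calabi--Eells--Wood construction (and the minimality of the uniton number at the normalized canonical element) forces $|I| = 1$. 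Making this matching precise through the Weierstrass description of Theorem \ref{sigmaweirstrass}, by explicitly identifying the holomorphic data $C\in(\mathfrak{u}^0_{\zeta_l})_{\sigma_{\varrho,l}}$ with the directrix in $\mathbb{C}P^{2n-1}$, is the technical heart of the argument.
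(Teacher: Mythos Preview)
Your parity analysis is correct: after Corollary \ref{crorol} the semi-canonical element must be $\zeta'=2\zeta_I$ for some nonempty $I\subseteq\{1,\dots,n-1\}$. The gap lies in the final normalization. You pass from $2\zeta_I$ to the canonical element $\zeta_I$, landing in the class $(\zeta_I,\mathrm{Ad}(\exp\pi\zeta_I)\circ\sigma_\varrho)$, and then try to force $|I|=1$ by matching uniton numbers against the Calabi--Eells--Wood description. But this argument is not sound: the normalized canonical element is not unique, and there is no reason your choice $\zeta_I$ should realize the \emph{minimal} uniton number. Indeed, it does not when $|I|>1$, precisely because a sharper normalization is available. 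So your uniton-number matching does not rule out $|I|>1$; it only shows that your particular normalization was suboptimal. Invoking Calabi--Eells--Wood directly would also make the argument circular in spirit, since Theorem \ref{RAs} (immediately following) recovers that description as a consequence of the present theorem.

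The paper's proof sidesteps this difficulty by choosing the target canonical element more aggressively: rather than normalizing $\zeta'=2\zeta_{I}$ to $\zeta_I$, it normalizes directly to the \emph{singleton} $\zeta_l$ with $l=\max I$. The point is that the hypotheses \eqref{toing2} of Proposition \ref{norm2} can be verified for the pair $(\xi,\xi')=(2\zeta_I,\zeta_l)$ with $\sigma=\sigma_\varrho$. Since $\g_{2i-1}^{\zeta'}=0$, only the first condition matters; for $\alpha=L_j-L_n\in\Delta'_\varrho$ with $\alpha(\zeta')=2i\mathrm{i}$ one has $j\leq l$, whence $\alpha(\zeta'-\zeta_l)=(2i-1)\mathrm{i}$, and the inclusion follows. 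Since $\exp\pi(\zeta'-\zeta_l)=\exp\pi\zeta_l$ (the remaining summands of $\zeta'-\zeta_l$ have even coefficients), Proposition \ref{norminf} yields the class $(\zeta_l,\sigma_{\varrho,l})$ directly, with no appeal to external classification results.
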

\begin{proof}Let $\zeta$ be a  $\varrho$-semi-canonical element and write
\begin{equation}\label{ozeta}\zeta=\sum_{i\in I_1}\zeta_i+ \sum_{i\in I_2}2\zeta_i\end{equation} for some disjoint subsets $I_1$ and $I_2$ of $\{1,\ldots,n-1\}$.
 By Proposition \ref{fundcan}, $P^{\sigma_\varrho}_\zeta\cong \mathbb{R}P^{2n-1}$ if and only if either $I_1=\emptyset$ or $I_1=\{n-1\}$.
Suppose that $I_1=\{n-1\}$. In this case,
$\exp\pi\zeta=\exp\pi\zeta_{n-1}  \in P_{\zeta_{n-1}}^{\sigma_\varrho}$.
 We claim that $P_{\zeta_{n-1}}^{\sigma_\varrho}$ is not the connected component of $P^{\sigma_\varrho}$ containing the identity $e$.
 Write $\exp\pi\zeta_{n-1}=\pi_V-\pi_V^\perp,$ where $V$ is the two-dimensional real space spanned by $e_{n}$ and $e_{2n}$. For each $g\in  P_e^{\sigma_\varrho}$, since the $G$-action $\cdot_{\sigma_\varrho}$  defined by \eqref{gaction} is transitive, we have $g=h\cdot_{\sigma_\varrho}e=hs_0h^{-1}s_0$ for some $h\in G$, which means that $gs_0=hs_0h^{-1}$. In particular, the $+1$-eigenspaces of $gs_0$  must be $1$-dimensional.  However, a simple computation shows that the $+1$-eigenspace of $\exp(\pi\zeta_{n-1})s_0$ is $3$-dimensional, which establishes our claim.

Then, any harmonic map $\varphi:S^2\to\mathbb{R}P^{2n-1}\cong P_e^{\sigma_\varrho}$ admits a $T_{\sigma_\varrho}$-invariant extended solution $\Phi:S^2\setminus D\to U^{\sigma_\varrho}_\zeta(SO(2n))$ with $\zeta$ a  $\varrho$-semi-canonical element of the form
$\zeta=\sum_{i\in I_2}2\zeta_i.$ Set $l=\max I_2$. Next we check that $\zeta$ and $\zeta_l$ satisfy the conditions of Proposition \ref{norm2}, with $\xi=\zeta$ and $\xi'=\zeta_l$.
It is clear that $\zeta\preceq \zeta_l$. Now, according to \eqref{deltas} and \eqref{delta1}, we can take $\Delta'_\varrho=\{L_i-L_n, L_n-L_i\}$.  Hence, for $i>0$,
\begin{equation*}
\mathfrak{g}_{2i}^\zeta\cap \mathfrak{m}_\varrho^\C=\bigoplus_{\alpha\in \Delta'_\varrho\cap \Delta_\zeta^{2i}}(\g_\alpha\oplus \g_{\varrho(\alpha)})\cap\mathfrak{m}^\C_\varrho,
\end{equation*}
where $\Delta_\zeta^{2i}=\{\alpha\in \Delta|\, \alpha(\zeta)=2i\mathrm{i}\}$. Since $$(L_j-L_n)(\zeta)=(\alpha_j+\alpha_{j+1}+\ldots+\alpha_{n-1})(\zeta)=2|I_2\cap \{j,\ldots,n-1\}|\mathrm{i},$$ we have
$$\Delta'_\varrho\cap \Delta_\zeta^{2i}=\{L_j-L_n|\,\, \mbox{$1\leq j\leq l$, and  $|I_2\cap\{j,\ldots,l\}|=i$}\}.$$
Then, given a root $\alpha=L_j-L_n\in \Delta'_\varrho\cap \Delta_\zeta^{2i}$ (in particular, $j\leq l$) we have
$\alpha(\zeta-\zeta_l)=(2i-1)\mathrm{i},$
which means that $\g_\alpha\subset \g^{\zeta-\zeta_l}_{2i-1}$. Consequently,
 $$\g_{2i}^\zeta\cap \mathfrak{m}_\varrho^\C  \subset \bigoplus_{0\leq j<2i}\g^{\zeta-\zeta_l}_j.$$
Since $\g_{2i-1}^\zeta=\{0\}$ for all $i$, we conclude that \eqref{toing2} holds, and the statement  follows from Propositions \ref{norminf} and \ref{norm2}.
\end{proof}
It is known \cite{calabi_1967} that there are no full harmonic maps $\varphi:S^2\to\mathbb{R}P^{2n-1}$. The class of harmonic maps associated to $(\zeta_l,\sigma_{\varrho,l})$ consists precisely of those $\varphi$ with $\varphi(S^2)$ contained, up to isometry, in   some $\mathbb{R}P^{2l}$, as shown in the next theorem.

\begin{thm}\label{RAs}
  Given $1\leq l\leq n-1$, any harmonic map  $\varphi:S^2\to\mathbb{R}P^{2n-1}$ in the class $(\zeta_l,\sigma_{\varrho,l})$ is given by
  \begin{equation}\label{projspace}
  \varphi={R}\cap(A\oplus \overline{A})^\perp,
  \end{equation}
   where ${R}$ is a constant $2l+1$-dimensional subspace of $\mathbb{R}^{2n}$ and $A$
  is a holomorphic isotropic subbundle of $S^2\times {R}$ of rank $l$ satisfying $\partial A\subseteq \overline{A}^\perp$. The corresponding extended solutions have uniton number $2$ with respect to the standard representation of $SO(2n)$.
  \end{thm}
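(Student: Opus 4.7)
The plan is to combine the classification scheme of Section~\ref{classs} with the Weierstrass representation of Theorem~\ref{sigmaweirstrass}. A harmonic map of class $(\zeta_l,\sigma_{\varrho,l})$ is given, up to isometry, by $\exp(-\pi\zeta_l)\tilde{\Phi}_{-1}$ for some normalized $T_{\sigma_{\varrho,l}}$-invariant extended solution $\tilde{\Phi}:S^2\setminus D\to U^{\sigma_{\varrho,l}}_{\zeta_l}(SO(2n))$, which by Theorem~\ref{sigmaweirstrass} takes the form $\tilde{\Phi}=\exp(C)\cdot\gamma_{\zeta_l}$ with $C$ a meromorphic function into $(\mathfrak{u}^0_{\zeta_l})_{\sigma_{\varrho,l}}$. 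Then $\varphi$ is identified, via the Cartan embedding $\iota_{\sigma_\varrho}(L)=(\pi_L-\pi_{L^\perp})s_0$, with a line-valued map into $\mathbb{R}P^{2n-1}$.

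The first task is to compute $(\mathfrak{u}^0_{\zeta_l})_{\sigma_{\varrho,l}}$ explicitly. Since $r(\zeta_l)=2$ and $\sigma_\varrho=\mathrm{Ad}(s_0)$ fixes each root vector $X_{i,j},Y_{i,j}$ with $i,j<n$ but acts by a swap-with-sign on each pair $(X_{i,n},Y_{i,n})$ and $(X_{n,i},Z_{i,n})$ with $i<n$, Proposition~\ref{concomp}(d) applied with $\tau=\sigma_{\varrho,l}$ yields that $(\mathfrak{u}^0_{\zeta_l})_{\sigma_{\varrho,l}}$ sits entirely in the $\lambda^0$ component and has basis $\{X_{i,n}+Y_{i,n}:1\leq i\leq l\}\cup\{Y_{i,j}:1\leq i<j\leq l\}$. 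A direct inspection shows that each of these generators maps $\mathbb{C}^{2n}$ into $R^\mathbb{C}$ and annihilates $R^{\mathbb{C},\perp}$, where $R:=V_0\oplus\mathrm{span}_\mathbb{R}\{e_i,f_i:1\leq i\leq l\}$ is the constant real $(2l+1)$-dimensional $(+1)$-eigenspace of $R_0:=\exp(\pi\zeta_l)\cdot s_0$ in $\mathbb{R}^{2n}$. Consequently $\exp(C)$ is block-diagonal for $\mathbb{C}^{2n}=R^\mathbb{C}\oplus R^{\mathbb{C},\perp}$, with identity on the second block.

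Since $\gamma_{\zeta_l}$ also acts as the identity on $R^{\mathbb{C},\perp}$, the Iwasawa factorization defining $\tilde{\Phi}$ is block-diagonal, with $\tilde{\Phi}|_{R^{\mathbb{C},\perp}}\equiv I$. Applying the bundle map $u_{\zeta_l}$ (Proposition~\ref{popo}) produces the $S^1$-invariant companion $\Phi_0:=u_{\zeta_l}\circ\tilde{\Phi}$, whose standard representation decomposes as $\rho\circ\Phi_0(\lambda)=\lambda^{-1}\pi_{\overline{A}}+\pi_{(A\oplus\overline{A})^\perp}+\lambda\pi_A$ for a holomorphic rank-$l$ complex subbundle $A\subseteq R^\mathbb{C}$. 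Isotropy of $A$ comes from $\overline{A}=W_{-1}$ being Hermitian orthogonal to $W_1=A$, and the superhorizontality $\partial A\subseteq\overline{A}^\perp$ is the standard translation of the extended-solution condition \eqref{im} to $S^1$-invariant flags. The line $L(z):=R\cap(A(z)\oplus\overline{A(z)})^\perp$ is then a real line, because $\dim_\mathbb{R}R=2l+1$ exceeds $\dim_\mathbb{R}(A\oplus\overline{A})=2l$ by exactly~$1$.

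The final and most delicate step is to verify that $\tilde{\Phi}_{-1}=\exp(\pi\zeta_l)\cdot(\pi_L-\pi_{L^\perp})\cdot s_0$, whence $\varphi=\exp(-\pi\zeta_l)\tilde{\Phi}_{-1}=(\pi_L-\pi_{L^\perp})s_0=\iota_{\sigma_\varrho}(L)$, identifying $\varphi(z)=L(z)=R\cap(A(z)\oplus\overline{A(z)})^\perp$. This is the main obstacle: although the block decomposition reduces everything to a computation on the $(2l+1)$-dimensional factor $R^\mathbb{C}$, one must carefully track the Iwasawa factor $b\in\Lambda_+^{\mathrm{alg}}G^\mathbb{C}$ in $\exp(C)\gamma_{\zeta_l}=\tilde{\Phi}b$ at $\lambda=-1$, exploiting the $\lambda$-independence of $\exp(C)$. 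The uniton number is finally $r_\rho(\zeta_l)=2$, since the standard representation has extremal weights $\pm L_1$ with $L_1(\zeta_l)=\mathrm{i}$, matching the $\lambda^{-1},\lambda^0,\lambda^1$ structure of $\gamma_{\zeta_l}$.
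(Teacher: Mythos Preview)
Your route to the inclusion $W\subset R^\mathbb{C}$ is different from the paper's and is correct: you compute an explicit basis of $(\mathfrak{u}^0_{\zeta_l})_{\sigma_{\varrho,l}}$ and observe that each generator preserves $R^\mathbb{C}$ and annihilates its orthogonal complement, forcing $\exp(C)$ to be block-diagonal. The paper instead writes down the $T_{\sigma_{\varrho,l}}$-invariance condition $[\pi_W,\pi_{V_0\oplus\tilde V_l}]=0$ directly on the $S^1$-invariant solution \eqref{phiw}, and then combines it with the dimension constraint coming from \eqref{pias} to conclude $\tilde W\subset V_0\oplus\tilde V_l=R^\mathbb{C}$. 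Your argument is more algebraic and avoids the dimension count; the paper's is more geometric and does not require naming a basis.

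However, your final step is not actually carried out, and your diagnosis of where the difficulty lies is off. Since you have shown that $C$ has only a $\lambda^0$ component, the Iwasawa factor $b$ is irrelevant: $\tilde\Phi$ is already $S^1$-invariant and equals $\lambda^{-1}\pi_W+\pi_{(W\oplus\overline W)^\perp}+\lambda\pi_{\overline W}$ with $W=\exp(C)\cdot V_l$ as a subbundle, so applying $u_{\zeta_l}$ is redundant and there is nothing to ``track'' at $\lambda=-1$. The genuine missing computation is the one the paper does in \eqref{phirp}--\eqref{pias}: starting from the identification $\tilde\Phi_{-1}=\exp(\pi\zeta_l)(\pi_\varphi-\pi_\varphi^\perp)s_0$, one solves for $\pi_\varphi-\pi_\varphi^\perp$ and, using that $\pi_{\tilde W}$ commutes with $\pi_{R^\mathbb{C}}$ (which you have established), obtains $\pi_\varphi=\mathrm{Ad}(s_0)(\pi_{R^\mathbb{C}}\pi_{\tilde W}^\perp)$, whence $\varphi=R\cap(A\oplus\overline A)^\perp$ with $A=s_0(W)$. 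You have assembled all the ingredients for this but stopped short of the two-line computation that finishes it.
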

  \begin{proof}
  Let  $\varphi:S^2\to\mathbb{R}P^{2n-1}$ be a harmonic map in the class $(\zeta_l,\sigma_{\varrho,l})$. This means that $\varphi$ admits an extended solution $\Phi:S^2\setminus D \to U_{\zeta_l}^{\sigma_{\varrho,l}}(SO(2n))$. Up to isometry, $\varphi$ is given by $\Phi_{-1}$, which takes values in $P_{\zeta_l}^{\sigma_{\varrho,l}}=\exp(\pi \zeta_l)P_e^{\sigma_{\varrho}}$. This connected component is identified with $\mathbb{R}P^{2n-1}$ via
 \begin{equation}\label{identi}
 g\cdot V_0\mapsto \exp(\pi \zeta_l)g\sigma_\varrho(g^{-1}).
 \end{equation}
Write
$\gamma_{\zeta_l}(\lambda)=\lambda^{-1}\pi_{V_l}+\pi_{V_l\oplus\overline{V}_l}^\perp+\lambda \pi_{\overline{V}_l},$
where $V_l$ is the $l$-dimensional isotropic subspace spanned by $\overline{u}_1,\ldots,\overline{u}_l$.

 We have $r(\zeta_l)=2$ if $l>1$ and $r(\zeta_1)=1$. Consequently, by Theorem \ref{sigmaweirstrass},
$$(\mathfrak{u}^0_{\zeta_l})_{\sigma_{\varrho,l}}=(\mathfrak{p}^{\zeta_l}_{0})^\perp\cap \mathfrak{k}^\C_{\sigma_{\varrho,l}}\oplus \lambda (\mathfrak{p}^{\zeta_l}_{1})^\perp\cap \mathfrak{m}^\C_{\sigma_{\varrho,l}}.$$
Here $(\mathfrak{p}^{\zeta_l}_{1})^\perp=\g_2^{\zeta_l}$, which is the null space for $l=1$. For $l>1$, since $\zeta_l=E_1+\ldots+E_l$, we have  $\g_2^{\zeta_l}=\{L_i+L_j|\, 1\leq i< j\leq l\}\subset \Delta(\mathfrak{k}_\varrho)$ and, from \eqref{pc},
$$\mathfrak{m}_{\sigma_{\varrho,l}}^\C=\bigoplus \mathfrak{g}^{\zeta_l}_{2i+1}\cap \mathfrak{k}^\C_{\varrho}\oplus\bigoplus \mathfrak{g}^{\zeta_l}_{2i}\cap \mathfrak{m}^\C_{\varrho}.$$
Hence
$(\mathfrak{p}^{\zeta_l}_{1})^\perp\cap \mathfrak{m}^\C_{\sigma_{\varrho,l}}= \g_2^{\zeta_l}\cap \mathfrak{m}^\C_{\varrho}=\{0\}.$
Then, for any $l\geq 1$,  we can write $\Phi=\exp C\cdot \gamma_{\zeta_l}$ for some holomorphic function
$$C:S^2\setminus D\to (\mathfrak{p}^{\zeta_l}_{0})^\perp\cap\mathfrak{k}^\C_{\sigma_{\varrho,l}} =(\g_1^{\zeta_l}\oplus\g_2^{\zeta_l})\cap \mathfrak{k}^\C_{\sigma_{\varrho,l}},$$
which means that $\Phi$ is a $S^1$-invariant extended solution with uniton number $2$:
\begin{equation}\label{phiw}
\Phi_\lambda=\lambda^{-1}\pi_{W}+\pi_{W\oplus\overline{W}}^\perp+\lambda \pi_{\overline{W}},
\end{equation}
where $W$ is a holomorphic isotropic subbundle of $S^2\times \mathbb{R}^{2n}$ of rank $l$ satisfying the superhorizontality condition $\partial W\subseteq \overline{W}^\perp$.

 Set $\tilde{V}_l= V_l\oplus \overline{V}_l$ and $\tilde{W}= W\oplus \overline{W}$.  The $T_{{\sigma_{\varrho,l}}}$-invariance of $\Phi$ implies that
 \begin{equation}\label{inva} [\pi_W,\pi_{V_0\oplus \tilde V_l}]=0.\end{equation}
Now, write $\varphi=g\cdot V_0$ and consider the identification \eqref{identi}. We must have
\begin{equation}\label{phirp}
  \Phi_{-1}= \exp(\pi \zeta_l)g\sigma_\varrho(g^{-1})=\exp(\pi \zeta_l)(\pi_\varphi-\pi_\varphi^\perp)s_0.
\end{equation}
From \eqref{phiw} and \eqref{phirp}  we obtain
\begin{equation}\label{pias}
\pi_\varphi-\pi_\varphi^\perp=\mathrm{Ad}(s_0)\big(\pi_{V_0\oplus \tilde V_l}\pi_{\tilde W}^\perp+\pi_{V_0\oplus \tilde V_l}^\perp\pi_{\tilde W}
-\pi_{V_0\oplus \tilde V_l}\pi_{\tilde W}-\pi_{V_0\oplus \tilde V_l}^\perp\pi_{\tilde W}^\perp\big).\end{equation}
In view of \eqref{inva}, we see that
$\pi_{V_0\oplus \tilde V_l}\pi_{\tilde W}^\perp+\pi_{V_0\oplus \tilde V_l}^\perp\pi_{\tilde W}$
is an orthogonal projection, and \eqref{pias} implies that this must be an orthogonal projection onto a $1$-dimensional real subspace. Then, one of its two terms vanishes, that is  either $\tilde W\subset V_0\oplus \tilde V_l$ or $\tilde W^\perp\subset (V_0\oplus \tilde V_l)^\perp$. For dimensional reasons, we see that the second case can not occur. Hence, we have
$$\pi_\varphi=\mathrm{Ad}(s_0)( \pi_{V_0\oplus \tilde V_l}\pi_{\tilde W}^\perp)= \pi_{V_0\oplus \tilde V_l}\mathrm{Ad}(s_0)(\pi_{\tilde W}^\perp),$$
that is \eqref{projspace} holds with ${R}=V_0\oplus  V_l\oplus \overline V_l$ and $A=s_0(W)$.

  \end{proof}
\begin{rem}
  If $\varphi$ is full in ${R}$, then the isotropic subbundle $A$ is the $l$-osculating space of some full totally isotropic holomorphic map $f$ from $S^2$ into the complex projective space of ${R}$, the so called \emph{directrix curve}  of $\varphi$. That is, in a local system of coordinates $(U,z)$, we have
$A(z)=\mathrm{Span}\big\{g,g',\ldots,g^{(l-1)}\}$, where $g$ is a lift of $f$ over $U$ and $g^{(r)}$ the $r$-th derivative of $g$ with respect to $z$. Hence, formula \eqref{projspace} agrees with
the classification given in Corollary 6.11 of \cite{eells_wood_1983}.
\end{rem}

\begin{eg}Let us consider the case $n=2$. We have only one class of harmonic maps:  $(\zeta_1,\sigma_{\varrho,1})$. From Theorem \ref{RAs},
  any such harmonic map $\varphi:S^2\to\R P^3$ is given by $\varphi=R\cap(A\oplus\overline{A})^\perp$, where $R$ is a constant 3-dimensional subspace of $\R^4$ and $A$ a holomorphic isotropic subbundle  of $S^2\times R$ of rank 1 such that $\partial A\subseteq\overline{A}^\perp$. Taking into account Theorem \ref{sigmaweirstrass}, any such holomorphic subbundles $A$ can be obtained from  a meromorphic function $a$ on $S^2$ as follows.

We have $\zeta_1=E_1$ and the corresponding extended solutions have uniton number $r(\zeta_1)=1$ (with respect to the standard representation). Any  extended solution $\Phi:S^2\setminus D\to U_{\zeta_1}^{\sigma_{\varrho,1}}(SO(4))$ is given by $\Phi=\exp C\cdot\gamma_{\zeta_1}$, with $\gamma_{\zeta_1}(\lambda)=\lambda^{-1}\pi_{{V}_1}+\pi_{V_1\oplus\overline{V}_1}^\perp+\lambda\pi_{\overline{V}_1}$, for some holomorphic vector-valued function $C:S^2\setminus D\to(\mathfrak{u}_{\zeta_1}^0)_{\sigma_{\varrho,1}}$, where $$(\mathfrak{u}_{\zeta_1}^0)_{\sigma_{\varrho,1}}=(\mathfrak{p}_0^{\zeta_1})^\perp\cap\mathfrak{k}^\C_{\sigma_{\varrho,1}}=\mathfrak{g}_1^{\zeta_1}\cap\mathfrak{k}^\C_{\sigma_{\varrho,1}}
=(\mathfrak{g}_{L_1-L_2}\oplus\mathfrak{g}_{L_1+L_2})\cap\mathfrak{k}^\C_{\sigma_{\varrho,1}}.$$

Considering the   root vectors $X_{i,j},Y_{i,j},Z_{i,j}$ as defined in \eqref{geradores}, we have $Y_{1,2}=\sigma_{\varrho,1}(X_{1,2})$.
Hence $C=a(z)(X_{1,2}+Y_{1,2})$ where $a(z)$ is a meromorphic function on $S^2$.
In this case, from \eqref{C_z}, it follows that $(\exp C)^{-1}(\exp C)_z=C_z$, and  it is clear that the extended solution condition for $\Phi$ holds independently of the choice of the meromorphic function $a(z)$.
Then, with respect to the complex basis $\mathbf{u}=\{u_1,u_2,\overline{u}_1,\overline{u}_2\}$,
\begin{equation}\label{zeta1rp3}\exp C\cdot\gamma_{\zeta_1}=\left[\begin{array}{cccc} 1& a & -a^2 & a\\ 0 & 1 & -a & 0\\ 0 & 0 & 1 & 0\\ 0 & 0 & -a & 1 \end{array}\right]\cdot\gamma_{\zeta_1}\end{equation}
and the subbundle $A$ of $R=\mathrm{Span}\{{u}_1,\overline{u}_1, u_2+\overline{u}_2\}$ is given by $A=\exp C\cdot {V}_1=\mathrm{span}\{(a^2,a,-1,a)\}$, which satisfies $\partial A\subseteq\overline{A}^\perp$.
\end{eg}

\begin{eg}Any harmonic two-sphere into $\R P^5$ in the class $(\zeta_1,\sigma_{\varrho,1})$ takes values in some $\R P^3$ inside $\R P^5$ and so it is essentially of the form (\ref{zeta1rp3}).
Next we consider the Weierstrass representation of harmonic spheres into $\R P^5$ in the class $(\zeta_2,\sigma_{\varrho,2})$, which are given by $\varphi=R\cap(A\oplus\overline{A})^\perp$, where $R$ is a constant 5-dimensional subspace of $\R^6$ and $A$ a holomorphic isotropic subbundle  of $S^2\times R$ of rank 2 such that $\partial A\subseteq\overline{A}^\perp$.
We have $\zeta_2=E_1+E_2$, then $r(\zeta_2)=2$.
Any extended solution $\Phi:S^2\setminus D\to U_{\zeta_2}^{\sigma_{\varrho,2}}(SO(6))$ is given by $\Phi=\exp C\cdot\gamma_{\zeta_2}$, with $\gamma_{\zeta_2}(\lambda)=\lambda^{-1}\pi_{{V}_2}+\pi_{V_2\oplus\overline{V}_2}^\perp+\lambda\pi_{\overline{V}_2},$ for some holomorphic vector-valued function $C:S^2\setminus D\to(\mathfrak{u}_{\zeta_2}^0)_{\sigma_{\varrho,2}}$, where
\begin{align*} (\mathfrak{u}_{\zeta_2}^0)_{\sigma_{\varrho,2}}  =\left((\mathfrak{g}_{L_1- L_3}\oplus \mathfrak{g}_{L_1+L_3})\cap\mathfrak{k}^\C_{\sigma_{\varrho,2}}\right)\oplus\left((\mathfrak{g}_{L_2- L_3}\oplus \mathfrak{g}_{L_2+L_3})\cap\mathfrak{k}^\C_{\sigma_{\varrho,2}}\right)\oplus \mathfrak{g}_{L_1+L_2}. \end{align*}

We have $Y_{1,3}=\sigma_{\varrho,2}(X_{1,3})$ and $Y_{2,3}=\sigma_{\varrho,2}(X_{2,3})$. Hence we can write
 $$C=a(z)(X_{1,3}+Y_{1,3})+b(z)(X_{2,3}+Y_{2,3})+c(z)Y_{1,2}$$
 where $a(z)$, $b(z)$ and $c(z)$ are meromorphic functions on $S^2$.

 Now, $\Phi=\exp{C}\cdot \gamma_{\zeta_2}$ is an
 extended solution if and only if, in the expression
$C_z-\frac{1}{2!}(\mathrm{ad} C)C_z,$
which does not depend on $\lambda$, the component on $\g_2^{\zeta_2}=\g_{L_1+L_2}$  vanishes. Since
$Y_{1,2}=[Y_{2,3},X_{1,3}]=[X_{2,3},Y_{1,3}]$ and $[X_{1,3},X_{2,3}]=[Y_{1,3},Y_{2,3}]=0$,
 this holds if and only if
$c'=ba'-ab'$, where prime denotes $z$-derivative. Since $A=\exp C\cdot V_2$, we can compute $\exp C$ in order to conclude that the holomorphic subbundle $A$ of $R=\mathrm{Span}\{{u}_1,u_2,\overline{u}_1,\overline{u}_2, u_3+\overline{u}_3\}$ is given by
$$A=\mathrm{Span}\{(a^2,ab+c,a,-1,0,a),(ab-c,b^2,b,0,-1,b)\}.$$
\end{eg}

\subsubsection{Harmonic maps into Real Grassmanians.}

% We will assume that $p\leq n$ since we can identify $G_p(\mathbb{R}^{2n})$ with $G_{2n-p}(\mathbb{R}^{2n})$ via $V\mapsto V^\perp$.

  Let $\zeta'$ be a $\varrho$-semi-canonical element of $SO(2n)$ given by \eqref{ozeta}, for
some disjoint subsets $I_1$ and $I_2$ of $\{1,\ldots,n-1\}$. By Proposition \ref{fundcan}, we know that $P^{\sigma_\varrho}_{\zeta'}\cong \mathbb{R}P^{2n-1}$ if and only if either $I_1=\emptyset$ or $I_1=\{n-1\}$. More generally we have: \begin{prop}\label{grassd}
If $I_1=\{i_1>i_2>\ldots>i_r\}$ and $d=\sum_{j=1}^r(-1)^{j+1}i_j$, then $P^{\sigma_\varrho}_{\zeta'}\cong G_{2d+1}(\mathbb{R}^{2n})$.
\end{prop}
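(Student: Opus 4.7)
The plan is to realize $P^{\sigma_\varrho}$ concretely as a disjoint union of real Grassmannians and then to determine which component contains $\exp(\pi\zeta')$. Since $\sigma_\varrho = \mathrm{Ad}(s_0)$ with $s_0 = \pi_{V_0} - \pi_{V_0}^\perp$ of determinant $-1$, the condition $\sigma_\varrho(g) = g^{-1}$ rewrites as $(gs_0)^2 = I$, so $gs_0$ is an involution of $\mathbb{R}^{2n}$ of determinant $-1$ and hence of the form $\pi_V - \pi_V^\perp$ for a unique odd-dimensional subspace $V \subseteq \mathbb{R}^{2n}$. The map $g \mapsto V(g) := \ker(gs_0 - I)$ is continuous with continuous inverse $V \mapsto (\pi_V - \pi_V^\perp)s_0$, and is $SO(2n)$-equivariant with respect to $\cdot_{\sigma_\varrho}$ and the natural action on subspaces; hence it identifies $P^{\sigma_\varrho}$ as an $SO(2n)$-space with $\bigsqcup_{p\text{ odd}} G_p(\mathbb{R}^{2n})$. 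Since each odd Grassmannian is connected, the connected components of $P^{\sigma_\varrho}$ are precisely these Grassmannians, and the proof reduces to computing the integer $p = \dim V(\exp(\pi\zeta'))$.

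To compute $\exp(\pi\zeta')$, I would first use that each $\zeta_i$ lies in the integer lattice $\mathfrak{I}(SO(2n))$, so $\exp(2\pi\zeta_i) = e$ and the $I_2$-part of $\zeta'$ drops out in the exponential: $\exp(\pi\zeta') = \prod_{j=1}^r \exp(\pi\zeta_{i_j})$. Since $E_l$ acts as $\pm\mathrm{i}$ on $u_l, \bar u_l$ and trivially on the other basis vectors, $\exp(\pi\zeta_i)$ negates the 2-plane $P_k \subset \mathbb{R}^{2n}$ (whose complexification is $\mathrm{span}\{u_k, \bar u_k\}$) precisely for $k \leq i$. Hence $\exp(\pi\zeta')$ acts on $P_k$ (for $k<n$) as the scalar $\epsilon_k := (-1)^{|\{j : i_j \geq k\}|}$ and trivially on $P_n$. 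On the other hand, $s_0$ acts as $-I$ on each $P_k$ with $k<n$ (since $P_k \perp V_0$) and as an involution with eigenvalues $+1, -1$ on $P_n$ (fixing $e_n = (u_n + \bar u_n)/\sqrt{2}$). Combining these, $\exp(\pi\zeta')s_0$ acts as $-\epsilon_k I$ on each $P_k$ with $k<n$ and as $\mathrm{diag}(+1,-1)$ on $P_n$, which yields
\begin{equation*}
\dim V(\exp(\pi\zeta')) = 1 + 2\,\#\{1 \leq k \leq n-1 : |\{j : i_j \geq k\}| \text{ is odd}\}.
\end{equation*}

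What remains is a short telescoping count. With the convention $i_{r+1} := 0$, the integer $|\{j : i_j \geq k\}|$ equals $j$ precisely on the $i_j - i_{j+1}$ integers $k$ in $(i_{j+1}, i_j]$. Summing over odd $j$ yields $\sum_{j \text{ odd}} (i_j - i_{j+1}) = i_1 - i_2 + i_3 - \cdots = d$, regardless of the parity of $r$. Therefore $\dim V(\exp(\pi\zeta')) = 2d + 1$ and $P^{\sigma_\varrho}_{\zeta'} \cong G_{2d+1}(\mathbb{R}^{2n})$, as required. The point requiring the most care is the very first step---verifying that the map $V$ is $SO(2n)$-equivariant with respect to $\cdot_{\sigma_\varrho}$ and that $SO(2n)$ (rather than just $O(2n)$) acts transitively on each odd Grassmannian---but both reduce to standard observations and pose no serious obstacle.
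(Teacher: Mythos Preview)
Your proof is correct and follows essentially the same route as the paper: both arguments observe that $\exp(\pi\zeta')=\exp(\pi\sum_{i\in I_1}\zeta_i)$, then identify the component $P^{\sigma_\varrho}_{\zeta'}$ by computing the dimension of the $+1$-eigenspace of $s_0\exp(\pi\zeta')$ via the block action on the planes $P_k$, arriving at the same telescoping sum $\sum_{j\text{ odd}}(i_j-i_{j+1})=d$. The only packaging difference is that you set up the explicit $SO(2n)$-equivariant bijection $g\mapsto\ker(gs_0-I)$ between $P^{\sigma_\varrho}$ and the odd Grassmannians, whereas the paper invokes its earlier Proposition~\ref{concomp} to read off the involution $\tau=\mathrm{Ad}(s_0\exp\pi\zeta'_{I_1})$ directly.
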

\begin{proof}
For $\zeta'$ of the form \eqref{ozeta}, set $\zeta'_{I_1}=\sum_{i\in I_1}\zeta_i$.
Clearly, $\exp\pi\zeta'=\exp\pi\zeta'_{I_1}$, and, by Proposition \ref{concomp}, $P^{\sigma_\varrho}_{\zeta'}$ is a symmetric space with involution
 $$\tau=\mathrm{Ad}(\exp\pi\zeta'_{I_1})\circ \sigma_\varrho=\mathrm{Ad}(s_0\exp \pi \zeta'_{I_1}).$$ We have
 $$\zeta'_{I_1}=r(E_1+\ldots + E_{i_r})+(r-1)(E_{i_r+1}+\ldots +E_{i_{r-1}})+\ldots +(E_{i_2+1}+\ldots +E_{i_1}),$$
and consequently, with the convention $V_{i_0}=V_n$ and $V_{i_{r+1}}=\{0\}$,
$$\exp\pi\zeta'_{I_1}=\sum_{j=0}^r(-1)^j\pi_{i_j-i_{j+1}}+\sum_{j=0}^r(-1)^j\overline{\pi}_{i_j-i_{j+1}},$$
where $\pi_{i_j-i_{j+1}}$ is the orthogonal projection onto $V_{i_j}\cap V_{i_{j+1}}^\perp$ and $\overline{\pi}_{i_j-i_{j+1}}$ the orthogonal projection onto the corresponding conjugate space. Hence, the $+1$-eigenspace of $s_0\exp\zeta'_{I_1} $ has dimension $2d+1$, with $d=\sum_{j=1}^r(-1)^{j+1}i_j$, which means that $P^{\sigma_\varrho}_{\zeta'}\cong G_{2d+1}(\mathbb{R}^{2n})$.
\end{proof}

In particular, we have $P_{\zeta_d}^{\sigma_\varrho}\cong G_{2d+1}(\mathbb{R}^{2n})$ for each $d\in\{1,\ldots,n-1\}$.

\begin{thm}\label{cangrass}
 Each harmonic map from $S^2$ into the real Grassmannian $G_{2d+1}(\mathbb{R}^{2n})$ belongs to one of the following classes:
 $(\zeta,\mathrm{Ad}\exp\pi(\tilde\zeta-\zeta)\circ \sigma_{\varrho,l}),$
 where $\zeta$ and $\tilde \zeta$ are $\varrho$-canonical elements such that $\tilde\zeta\preceq \zeta$ and $\tilde\zeta=\sum_{i\in I_1}\zeta_i+\zeta_l$, where
  \begin{enumerate}
    \item[a)] $I_1=\{i_1>i_2>\ldots>i_r\}$ satisfies $d=\sum_{j=1}^r(-1)^{j+1}i_j$;
    \item[b)] $l\in\{0,1,\ldots,n-1\}$ and $l\notin I_1$ (if $l=0$, we set $\zeta_0=0$).
  \end{enumerate}
\end{thm}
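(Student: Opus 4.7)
My strategy is to specialize the general classification procedure of Section~\ref{classs} to $G = SO(2n)$ with target $G_{2d+1}(\mathbb{R}^{2n})$, and then to exploit the fact that every $\zeta_i$ already lies in $\mathfrak{I}(SO(2n))$ in order to bring the resulting involution to the form claimed in the statement.

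Given a harmonic map $\varphi : S^2 \to G_{2d+1}(\mathbb{R}^{2n})$, I would first use Proposition~\ref{grassd} to identify the Grassmannian with a connected component $P^{\sigma_\varrho}_{\zeta_d}$ (taking, for instance, $I_1 = \{d\}$ in that proposition). Theorem~\ref{tinva} and Corollary~\ref{crorol} then produce a $T_{\sigma_\varrho}$-invariant extended solution $\Phi : S^2 \setminus D \to U^{\sigma_\varrho}_{\zeta'}(SO(2n))$ with $\zeta'$ a $\varrho$-semi-canonical element, which I write as $\zeta' = \sum_{i \in I_1}\zeta_i + \sum_{i \in I_2} 2\zeta_i$ for disjoint $I_1, I_2 \subset \{1,\ldots,n-1\}$. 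Since $\Phi_{-1}$ still takes values in a connected component isometric to $G_{2d+1}(\mathbb{R}^{2n})$, Proposition~\ref{grassd} forces $d = \sum_{j=1}^r (-1)^{j+1} i_j$ when $I_1 = \{i_1 > \cdots > i_r\}$, which is condition~(a).

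Next, I set $l := \max I_2$ when $I_2 \neq \emptyset$ and $l := 0$ (with the convention $\zeta_0 := 0$) otherwise; define $\tilde\zeta := \sum_{i \in I_1}\zeta_i + \zeta_l$ and take $\zeta := \tilde\zeta$. By construction $l \notin I_1$ (condition (b)), $\tilde\zeta$ is a $\varrho$-canonical element of $SO(2n)$, and $\tilde\zeta \preceq \zeta$ is trivial; a direct inspection of the $H_i$-coefficients yields $\zeta' \preceq \zeta$. Running the combinatorial argument from the proof of Theorem~\ref{classesproj}, generalised to accommodate a non-empty $I_1$, I would verify the hypotheses~\eqref{toing2} of Proposition~\ref{norm2} with $\xi = \zeta'$ and $\xi' = \zeta$. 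Propositions~\ref{norminf} and~\ref{norm2} then deliver a normalized $T_\tau$-invariant extended solution $\tilde\Phi : S^2 \setminus D \to U^\tau_\zeta(SO(2n))$ with $\tau = \mathrm{Ad}\exp\pi(\zeta' - \zeta) \circ \sigma_\varrho$ whose evaluation $\tilde\Phi_{-1}$ coincides with $\varphi$ up to isometry.

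To finish, I observe that $\zeta_i \in \mathfrak{I}(SO(2n))$ for every $i$, hence $\exp\pi(2\zeta_i) = e$ and $\exp(-\pi\zeta_l) = \exp(\pi\zeta_l)$; cancelling the $I_2$-contribution in $\zeta' - \zeta$ and using $\sum_{i \in I_1}\zeta_i = \tilde\zeta - \zeta_l$ gives
\[
\tau \;=\; \mathrm{Ad}\exp\pi(\tilde\zeta - \zeta - \zeta_l) \circ \sigma_\varrho \;=\; \mathrm{Ad}\exp\pi(\tilde\zeta - \zeta) \circ \mathrm{Ad}\exp(\pi\zeta_l) \circ \sigma_\varrho \;=\; \mathrm{Ad}\exp\pi(\tilde\zeta - \zeta) \circ \sigma_{\varrho, l},
\]
placing $\varphi$ in the class $(\zeta,\, \mathrm{Ad}\exp\pi(\tilde\zeta - \zeta) \circ \sigma_{\varrho,l})$ as required. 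The hard part is the verification of~\eqref{toing2}: for each root $\alpha = L_j \pm L_k$ of $\mathfrak{so}(2n)$ one must compute $\alpha(\zeta')/\mathrm{i}$ and $\alpha(\zeta'-\zeta)/\mathrm{i}$ in terms of $|I_1 \cap [j,k-1]|$ and $|I_2 \cap [j,k-1]|$ and, using Lemma~\ref{pmmm}, check that the even root spaces lying in $\mathfrak{m}^\C_{\sigma_\varrho}$ and the odd root spaces in $\mathfrak{k}^\C_{\sigma_\varrho}$ fall into strictly lower $\mathrm{ad}(\zeta'-\zeta)$-eigenspaces; the choice $l = \max I_2$ is precisely what makes this parity bookkeeping work, generalising the mechanism already exploited in Theorem~\ref{classesproj}.
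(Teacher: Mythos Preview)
Your overall plan mirrors the paper's proof closely---identify the target with $P^{\sigma_\varrho}_{\zeta_d}$, pass to a $\varrho$-semi-canonical $\zeta'=\sum_{i\in I_1}\zeta_i+\sum_{i\in I_2}2\zeta_i$, set $l=\max I_2$, $\tilde\zeta=\sum_{i\in I_1}\zeta_i+\zeta_l$, and verify Proposition~\ref{norm2} for the pair $\zeta'\preceq\tilde\zeta$ by the root-combinatorics argument you sketch. That part is fine and matches the paper.

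There is, however, one genuine gap. You write that ``Proposition~\ref{grassd} forces $d=\sum_{j=1}^r(-1)^{j+1}i_j$'' because $\Phi_{-1}$ lands in a component isometric to $G_{2d+1}(\mathbb{R}^{2n})$. Proposition~\ref{grassd} only tells you $P^{\sigma_\varrho}_{\zeta'}\cong G_{2d'+1}(\mathbb{R}^{2n})$ with $d'=\sum_j(-1)^{j+1}i_j$; since $G_{2d+1}(\mathbb{R}^{2n})$ and $G_{2(n-d-1)+1}(\mathbb{R}^{2n})$ are isometric via $V\mapsto V^\perp$, mere isometry allows both $d'=d$ and $d'=n-d-1$. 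What Corollary~\ref{crorol} actually gives is the \emph{equality} of connected components $P^{\sigma_\varrho}_{\zeta'}=P^{\sigma_\varrho}_{\zeta_d}$, and you must then argue that the alternative $d'=n-d-1$ is excluded because in that case $P^{\sigma_\varrho}_{\zeta'}$ is a \emph{different} connected component of $P^{\sigma_\varrho}$ from $P^{\sigma_\varrho}_{\zeta_d}$. The paper handles this by invoking the eigenspace-dimension argument from the proof of Theorem~\ref{classesproj} (comparing the $+1$-eigenspaces of $\exp(\pi\zeta')s_0$ and $\exp(\pi\zeta_d)s_0$). Without this step, condition~(a) on $I_1$ is not established.
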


 \begin{proof}
We consider harmonic maps into $P_{\zeta_d}^{\sigma_\varrho}\cong G_{2d+1}(\mathbb{R}^{2n})$.
 Let $\zeta'$ be a  $\varrho$-semi-canonical element and write
$\zeta'=\sum_{i\in I_1}\zeta_i+ \sum_{i\in I_2}2\zeta_i$ for some disjoint subsets $I_1$ and $I_2$ of $\{1,\ldots,n-1\}$.
 By Proposition \ref{grassd}, $P^{\sigma_\varrho}_{\zeta'}\cong G_{2d+1}(\mathbb{R}^{2n})$ if and only if either $d=\sum_{j=1}^r(-1)^{j+1}i_j$
 or $n-d-1=\sum_{j=1}^r(-1)^{j+1}i_j$, since $G_{2d+1}(\mathbb{R}^{2n})$ and $G_{2d'+1}(\mathbb{R}^{2n}),$
 with $d'=n-d-1$, can be identified via $V\mapsto V^\perp$. However, it follows from the same reasoning  as in the proof of Theorem \ref{classesproj}  that, in the second case,  $P^{\sigma_\varrho}_{\zeta'}$ does not coincide with the connected component $P_{\zeta_d}^{\sigma_\varrho}$. So we only consider the $\varrho$-semi-canonical  elements $\zeta'$ with $d=\sum_{j=1}^r(-1)^{j+1}i_j$.

Set $l=\max I_2$.
Next we check that the pair $\zeta'\preceq \tilde\zeta=\sum_{i\in I_1}\zeta_i+\zeta_l$ satisfies the conditions of Proposition \ref{norm2}. Considering the same notations we  used in the proof  of Theorem \ref{classesproj}, for each $i>0$ we have
$$\Delta'_\varrho\cap \Delta_{\zeta'}^{2i}=\{L_j-L_n|\,\,\,   2|I_2\cap\{j,\ldots,l\}|+|I_1\cap\{j,\ldots,n-1\}|=2i\}.$$
In particular,  for $i>0$ and $\alpha=L_j-L_n\in \Delta'_\varrho\cap \Delta_{\zeta'}^{2i}$, it is clear that $\alpha(\zeta'-\tilde \zeta)/\mathrm{i}\leq 2i-1,$ and consequently
 $$\g_{2i}^{\zeta'}\cap \mathfrak{m}_\varrho^\C  \subset \bigoplus_{0\leq j<2i}\g^{\zeta'-\tilde\zeta}_j.$$

For $i>0$, we have the decomposition
$$\g_{2i-1}^{\zeta'}\cap \mathfrak{k}_\varrho^\C=\!\!\! \bigoplus_{{\alpha\in \Delta(\mathfrak{k}_\varrho)\cap \Delta_{\zeta'}^{2i-1}}}\!\!\! \g_\alpha\oplus
   \!\!\!  \bigoplus_{{\alpha\in \Delta'_\varrho\cap \Delta_{\zeta'}^{2i-1}}} \!\!\!(\g_\alpha\oplus  \g_{\varrho(\alpha)})\cap \mathfrak{k}^\C_\varrho. $$
     Given $\alpha\in \g_{2i-1}^{\zeta'}$, since $\alpha(\zeta')/\mathrm{i}$ is odd, we must have $\alpha(\zeta_j)\neq 0$ for some $j\in I_1$. Hence $\alpha(\zeta'-\tilde\zeta)/\mathrm{i}<\alpha(\zeta')/\mathrm{i} $ and we conclude that
  $$\g_{2i-1}^{\zeta'}\cap \mathfrak{k}_\varrho^\C  \subset \bigoplus_{0\leq j<2i-1}\g^{\zeta'-\zeta}_j.$$

 The statement of the theorem   follows now from Propositions \ref{norminf} and \ref{norm2}.
\end{proof}

Next we will study in detail the case  $G_3(\mathbb{R}^6)$. Take as base point  of  $G_3(\mathbb{R}^6)$ the $3$-dimensional real subspace $V_0\oplus V_1\oplus \overline{V}_1$, where $V_1$ is the one-dimensional isotropic subspace spanned by $\overline{u}_1$. This choice establishes the identification
$$G_3(\mathbb{R}^6)\cong SO(6)/S(O(3)\times O(3))$$ and the  corresponding involution is $\sigma_{\varrho,1}=\mathrm{Ad}(\exp \pi\zeta_1)\circ \sigma_\varrho$. Following our classification procedure, we also identify $G_3(\mathbb{R}^6)$ with $P_{\zeta_1}^{\sigma_\varrho}$ via the totally geodesic embedding \eqref{tttt}. From Theorem \ref{cangrass}, we have six classes of harmonic maps into $G_3(\mathbb{R}^6)$:
\begin{align*}
 &(\zeta_1,\sigma_\varrho),\,\,\,\,  (\zeta_1+\zeta_2,\sigma_\varrho),\,\,\,\, (\zeta_2, \sigma_{\varrho,1}),\,\,\,\,(\zeta_1, \sigma_{\varrho,2}), \,\,\,\, (\zeta_1+\zeta_2,\sigma_{\varrho,2}),\,\,\,\, (\zeta_2,\mathrm{Ad}(\exp\pi\zeta_2)\circ \sigma_{\varrho,1}).
\end{align*}

\begin{thm}\label{36} Let $\varphi:S^2\to G_3(\mathbb{R}^6)$ be an harmonic map.
  \begin{enumerate}
    \item If $\varphi$ is associated to the pair $(\zeta_1,\sigma_\varrho)$ then $\varphi$ is $S^1$-invariant and, up to isometry, is given by \begin{equation}\label{mixedpair}\varphi=V_0\oplus V\oplus \overline{V},\end{equation} where  $V$ is a holomorphic isotropic subbundle of $S^2\times V_0^\perp$ of rank $1$ satisfying $\partial V\subseteq \overline{V}^\perp$.
    \item  If $\varphi$ is associated to the pair $(\zeta_1+\zeta_2,\sigma_\varrho)$ and is $S^1$-invariant, then, up to isometry,
    \begin{equation}\label{varphi1}
    \varphi=V_0\oplus (W\cap V^\perp) \oplus (\overline {W\cap V^\perp }),\end{equation} where $V\subset W$ are holomorphic isotropic  subbundles of $S^2\times V_0^\perp$ of rank $1$ and $2$, respectively, satisfying $\partial V\subset W$ and $\partial W\subset \overline W^\perp.$
        \item If $\varphi$ is associated to the pair $(\zeta_2,\sigma_{\varrho,1})$ and is $S^1$-invariant, then, up to isometry,    \begin{equation}\label{varphi2}\varphi= \{(L_1\oplus \overline{L}_1)^\perp\cap (V_0\oplus V_1\oplus\overline{V}_1)\}\oplus(L_2\oplus \overline{L}_2),\end{equation} where $L_1$ and $L_2$ are holomorphic isotropic  bundle lines of $S^2\times (V_0\oplus V_1\oplus\overline{V}_1)$ and $S^2\times (V_0\oplus V_1\oplus\overline{V}_1)^\perp$, respectively. \end{enumerate}

          The corresponding extended solutions have uniton number $2$, $4$, and $2$, respectively, with respect to the standard representation of $SO(6)$.
            The harmonic maps in the classes  $(\zeta_1, \sigma_{\varrho,2})$, $(\zeta_1+\zeta_2,\sigma_{\varrho,2})$, and $(\zeta_2,\mathrm{Ad}(\exp\pi\zeta_2)\circ \sigma_{\varrho,1})$ are  precisely the orthogonal complements of the harmonic maps in the classes $(\zeta_1,\sigma_\varrho)$,  $(\zeta_1+\zeta_2,\sigma_\varrho)$, and   $(\zeta_2, \sigma_{\varrho,1})$, respectively.
            \end{thm}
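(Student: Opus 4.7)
My approach is to handle the three $S^1$-invariant classes (1)--(3) uniformly via Theorem~\ref{sigmaweirstrass} combined with the flag description of $S^1$-invariant extended solutions, and then to deduce the three remaining classes from the orthogonal-complement symmetry induced by left-multiplication in the ambient group. The uniton numbers $2$, $4$, $2$ I would read off directly from the formula $r_\rho(\xi)=\omega^*(\xi)-\varpi^*(\xi)$ applied to the standard representation of $SO(6)$ (whose highest and lowest weights are $L_1$ and $-L_1$), evaluated at $\xi=\zeta_1$, $\zeta_1+\zeta_2$, $\zeta_2$.

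For class (1) the key simplification is that $r(\zeta_1)=1$, so $(\mathfrak{u}^0_{\zeta_1})_{\sigma_\varrho}=\mathfrak{g}_1^{\zeta_1}\cap\mathfrak{k}_{\sigma_\varrho}^\C$ is concentrated in $\lambda^0$. The Weierstrass datum $C$ is therefore $\lambda$-independent, and this forces $\Phi=\exp C\cdot\gamma_{\zeta_1}$ to be $S^1$-invariant. In the standard representation, $\Phi$ takes the flag form $\Phi_\lambda=\lambda^{-1}\pi_W+\pi_{(W\oplus\overline W)^\perp}+\lambda\pi_{\overline W}$ with $W$ a holomorphic isotropic rank-$1$ subbundle satisfying $\partial W\subset\overline W^\perp$. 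Computing $\Phi(-1)=I-2\pi_{W\oplus\overline W}$, the $T_{\sigma_\varrho}$-invariance condition $\sigma_\varrho(\Phi(-\lambda)\Phi(-1)^{-1})=\Phi(\lambda)$ reduces to $s_0W=W$, hence $W\subset V_0^\perp$. Matching $\Phi_{-1}$ with the Cartan-type embedding~\eqref{tttt} of $G_3(\mathbb{R}^6)\cong P_{\zeta_1}^{\sigma_\varrho}$ then identifies $\varphi=V_0\oplus V\oplus\overline V$ with $V=W$.

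Classes (2) and (3) follow the same blueprint but with richer flag data. For $\zeta_1+\zeta_2=2E_1+E_2$, the $\gamma_{\zeta_1+\zeta_2}$-eigenspaces produce a superhorizontal filtration containing two nested holomorphic isotropic subbundles $V\subset W$ of ranks $1$ and $2$, with $\partial V\subset W$ and $\partial W\subset\overline W^\perp$; $T_{\sigma_\varrho}$-invariance again forces $V,W\subset V_0^\perp$, and evaluation at $\lambda=-1$ yields~\eqref{varphi1}. For $\zeta_2=E_1+E_2$, the eigenspaces of $\gamma_{\zeta_2}$ give a single holomorphic isotropic rank-$2$ subbundle $W$ with $\partial W\subset\overline W^\perp$; now $T_{\sigma_{\varrho,1}}$-invariance involves the base point $V_0\oplus V_1\oplus\overline V_1$ rather than just $V_0$, and this stronger symmetry, combined with the isotropy of $W$, forces $W$ to split as $L_1\oplus L_2$ with $L_1\subset V_0\oplus V_1\oplus\overline V_1$ and $L_2$ in its orthogonal complement, giving~\eqref{varphi2}.

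For the remaining three classes, the recipe of Section~\ref{classs} produces associated $\varrho$-semi-canonical element $\zeta'=\zeta+\zeta_2$ in each case, so by Proposition~\ref{norminf} the underlying extended solution differs from the corresponding one in (1)--(3) precisely by left-multiplication by $\exp\pi\zeta_2$. Under the identification of $G_3(\mathbb{R}^6)$ with $P_{\zeta_1}^{\sigma_\varrho}$, this global left-multiplication descends to the isometry $V\mapsto V^\perp$ of the Grassmannian, which accounts for the final ``orthogonal complement'' statement. The main obstacle I anticipate is translating $T_\tau$-invariance of the abstract flag data into the concrete subbundle conditions stated above; case (3) in particular requires a careful analysis of how $\sigma_{\varrho,1}$ intertwines with the $\gamma_{\zeta_2}$-grading to force the split $W=L_1\oplus L_2$, where Lemma~\ref{pmmm} and the root-level description of $\mathfrak{k}_{\sigma_{\varrho,1}}^\C$ and $\mathfrak{m}_{\sigma_{\varrho,1}}^\C$ become crucial.
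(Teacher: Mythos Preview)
Your treatment of cases (1)--(3) is correct and follows the paper's approach: use Theorem~\ref{sigmaweirstrass} to see that the Weierstrass datum is $\lambda$-independent (case (1)) or to set up the superhorizontal flag (cases (2), (3)); translate $T_\tau$-invariance into a constraint on the subbundles; then match $\Phi_{-1}$ against the totally geodesic embedding~\eqref{tttt} to read off $\varphi$. One simplification: in case (3) you do not need Lemma~\ref{pmmm} or a root-by-root analysis. Since $\sigma_{\varrho,1}=\mathrm{Ad}(\exp\pi\zeta_1\cdot s_0)$ is conjugation by the reflection across $V_0\oplus V_1\oplus\overline V_1$, $T_{\sigma_{\varrho,1}}$-invariance of the $S^1$-invariant $\Phi$ is exactly $[\pi_W,\pi_{V_0\oplus V_1\oplus\overline V_1}]=0$, and this commutator condition alone forces the splitting $W=L_1\oplus L_2$.

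The gap is in your handling of the last three classes. Proposition~\ref{norminf} does \emph{not} say that the normalized extended solution differs from the original by left-multiplication by $\exp\pi(\zeta'-\zeta)$; the passage involves conjugation by some $h\in G$ together with left-multiplication by a constant loop $\gamma$, and the exponential enters only through the change of involution $\tau=\mathrm{Ad}\exp\pi(\zeta'-\zeta)\circ\sigma$. In particular, a $T_{\sigma_\varrho}$-invariant extended solution in $U_{\zeta_1}$ is not carried to a $T_{\sigma_{\varrho,2}}$-invariant one in $U_{\zeta_1}$ by left-multiplying by $\exp\pi\zeta_2$ (indeed, left-multiplication by a constant does not change the $T_\sigma$-invariance condition at all). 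Your statement that ``the recipe of Section~\ref{classs} produces associated $\varrho$-semi-canonical element $\zeta'=\zeta+\zeta_2$'' is also not how the classification unwinds: for instance, the pair $(\zeta_1,\sigma_{\varrho,2})$ arises from $I_1=\{2,1\}$, $l=0$, $\tilde\zeta=\zeta_1+\zeta_2$ and $\zeta=\zeta_1$ in Theorem~\ref{cangrass}, not from any shifting by $\zeta_2$. The paper's route for these cases is simply to repeat the computation: write down the $S^1$-invariant flag in $U_\zeta^\tau$, impose $T_\tau$-invariance, identify $G_3(\mathbb R^6)$ with the appropriate $P_\zeta^\tau$, and match $\Phi_{-1}$; the orthogonal-complement relation then emerges by inspection. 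If you want a structural shortcut, note that conjugation (not left-multiplication) by $q=\exp\pi\zeta_2$ sends $U_\zeta^\sigma$ to $U_\zeta^{\mathrm{Ad}(q)\circ\sigma}$ and track how this conjugation interacts with the two different embeddings of $G_3(\mathbb R^6)$; but this still requires care and is no shorter than the direct computation.
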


\begin{proof}
 For the first two classes, and according to our classification procedure, we
identify $G_3(\mathbb{R}^6)$ with $P_{\zeta_1}^{\sigma_\varrho}$ via the totally geodesic embedding
$g\cdot (V_0\oplus V_1\oplus \overline{V}_1)\mapsto \exp(\pi\zeta_1)g\sigma_{\varrho,1}(g^{-1}).$ In these two cases, $T_{\sigma_\varrho}$-invariant extended solutions $\Phi$ associated to harmonic maps
 $\varphi=g\cdot (V_0\oplus V_1\oplus \overline{V}_1)$ satisfy
\begin{equation}\label{phimenos1}
\Phi_{-1}=\exp(\pi\zeta_1)g\sigma_{\varrho,1}(g^{-1})=\exp(\pi\zeta_1)(\pi_\varphi-\pi_{\varphi}^\perp)\exp(\pi\zeta_1)s_0.
\end{equation}

 First we consider the harmonic maps associated to the pair $(\zeta_1,\sigma_\varrho)$.  We have $r(\zeta_1)=1$ and
 $$(\mathfrak{u}^0_{\zeta_1})_{\sigma_{\varrho}}=(\mathfrak{p}^{\zeta_1}_{0})^\perp\cap \mathfrak{k}^\C_{{\varrho}}=\g_1^{\zeta_1}\cap \mathfrak{k}^\C_{{\varrho}}.$$
 Consequently any such harmonic map  is $S^1$-invariant. Write
 $\gamma_{\zeta_1}(\lambda)=\lambda^{-1}\pi_{V_1}+\pi^\perp_{V_1\oplus \overline{V}_1}+\lambda \pi_{\overline{V}_1},$
 where $V_1$ is the one-dimensional isotropic space spanned by $\overline{u}_1$.  Let $\Phi:S^2\setminus D\to U_{\zeta_1}^{\sigma_\varrho}$ be an extended solution associated to the harmonic map $\varphi$. Then, by $S^1$-invariance, we can write
 \begin{equation}\label{phicase1}
 \Phi_\lambda=\lambda^{-1}\pi_V+ \pi^\perp_{V\oplus \overline{V}}+\lambda\pi_{\overline V},
  \end{equation} where $V$ is a holomorphic isotropic subbundle of $S^2\times \mathbb{R}^{6}$ of rank $1$ satisfying $\partial V\subseteq \overline{V}^\perp$. The $T_{\sigma_\varrho}$-invariance of $\Phi$ implies that $V_0\subset (V\oplus\overline{V})^\perp$.
Equating \eqref{phimenos1} and \eqref{phicase1}, we get, up to isometry, $\varphi=V_0\oplus V\oplus \overline{V}$.

For the case  $(\zeta_1+\zeta_2,\sigma_{\varrho})$, since
\begin{equation}\label{zeta1+zeta2}
\gamma_{\zeta_1+\zeta_2}(\lambda)=\lambda^{-2}\pi_{V_1}+\lambda^{-1}\pi_{V_2\cap V_1^\perp}+\pi_{V_2\oplus \overline{V}_2}^\perp+\lambda\pi_{\overline{V}_2\cap \overline{V}_1^\perp}+\lambda^2\pi_{\overline{V}_1},
\end{equation}
 any $S^1$-invariant harmonic map $\varphi$ in this class  admits an extended solution of the form
\begin{equation}\label{phicase2}
\Phi_\lambda= \lambda^{-2}\pi_{V}+\lambda^{-1}\pi_{W\cap V^\perp}+\pi_{W\oplus \overline{W}}^\perp+\lambda\pi_{\overline{W}\cap \overline{V}^\perp}+\lambda^2\pi_{\overline{V}},
\end{equation}
where  $V\subset W$ are holomorphic isotropic  subbundles  of rank $1$ and $2$, respectively, satisfying $\partial V\subset W$ and $\partial W\subset \overline W^\perp.$
By $T_{\sigma_\varrho}$-invariance, we must have $V_0\subset (W\oplus\overline{W})^\perp$, hence $V\subset W$ are subbundles of $S^2\times V_0^\perp$. Equating \eqref{phimenos1} and \eqref{phicase2}, we get \eqref{varphi1}.

For the case  $(\zeta_2,\sigma_{\varrho,1})$, we identify $G_3(\mathbb{R}^6)$ with $P_{\zeta_2}^{\sigma_{\varrho,1}}=\exp\pi\zeta_1P_{\zeta_2-\zeta_1}^{\sigma_\varrho}$ via the totally geodesic embedding
\begin{equation}\label{oao}
g\cdot (V_0\oplus V_1\oplus \overline{V}_1)\mapsto g\sigma_{\varrho,1}(g^{-1}).\end{equation}
Extended solutions $\Phi$ associated to $S^1$-invariant harmonic maps in this class must be of the form
\begin{equation}\label{phiws}
\Phi_\lambda=\lambda^{-1}\pi_W+ \pi_{W\oplus \overline W}^\perp+\lambda\pi_W,\end{equation} where  $W$ is a holomorphic isotropic  subbundle  of rank $2$.
By $T_{\sigma_{\varrho,1}}$-invariance, we must have
$[\pi_W,\pi_{V_0\oplus  V_1\oplus\overline{V}_1}]=0,$
which means that $W$ must be of the form $W=L_1\oplus L_2$, where  $L_1$ and $L_2$, respectively, are holomorphic isotropic  bundle lines of $S^2\times (V_0\oplus V_1\oplus\overline{V}_1)$ and $S^2\times (V_0\oplus V_1\oplus\overline{V}_1)^\perp$.

On the other hand, in view of \eqref{oao}, we have
$\Phi_{-1}=(\pi_\varphi-\pi_\varphi^\perp)\exp(\pi\zeta_1) s_0.$
Equating this with \eqref{phiws},  we conclude that
\eqref{varphi2} holds.
The remaining cases are treated similarly.
\end{proof}
\begin{rem}
  The first two classes of $S^1$-invariant harmonic maps $\varphi:S^2\to G_3(\mathbb{R}^6)$ in Theorem \ref{36} factor through $G_2(\mathbb{R}^5)$. That is, for any such harmonic map $\varphi$, there exists $\tilde{\varphi}:S^2\to G_2(\mathbb{R}^5)$, where we identify $\mathbb{R}^5$ with $V_0^\perp$, such that $\varphi=V_0\oplus \tilde{\varphi}$. An explicit construction of all harmonic maps from $S^2$ into  $G_2(\mathbb{R}^n)$ can be found in \cite{woodG2}. In that paper, harmonic maps of the form \eqref{mixedpair} are called \emph{real mixed pairs}. We emphasise that the harmonic maps into $G_3(\mathbb{R}^6)$  associated to extended solutions in the corresponding unstable manifolds need not to factor through $G_2(\mathbb{R}^5)$ in the same way.
 \end{rem}

  Let us consider  the case $(\zeta_1+\zeta_2,\sigma_{\varrho})$.
Taking into account the Weierstrass representation of Theorem \ref{sigmaweirstrass}, any extended solution $\Phi:S^2\setminus D\to U^{\sigma_{\varrho}}_{\zeta}(SO(6))$, with $\zeta=\zeta_1+\zeta_2$, can be written as
$\Phi=\exp{C}\cdot \gamma_{\zeta}$, for some meromorphic vector-valued function
$C:S^2\to (\mathfrak{u}^0_\zeta)_{\sigma_\varrho}$. We have $r(\zeta)=3$ and
$$ (\mathfrak{u}^0_{\zeta})_{\sigma_\varrho}=(\g_1^\zeta\oplus\g_2^\zeta\oplus \g_3^\zeta)\cap \mathfrak{k}^\C_{\varrho}\oplus \lambda (\g_2^\zeta\oplus \g_3^\zeta)\cap \mathfrak{m}^\C_{\varrho}\oplus \lambda^2 \g_3^\zeta\cap  \mathfrak{k}^\C_{\varrho}.$$
Moreover,
\begin{eqnarray*}
&\g_1^\zeta\cap \mathfrak{k}^\C_{\varrho}=\g_{L_1-L_2}\oplus \{(\g_{L_2- L_3}\oplus\g_{L_2+L_3}) \cap \mathfrak{k}^\C_{\varrho}\},\quad
\g_2^\zeta\cap \mathfrak{k}^\C_{\varrho}=(\g_{L_1+ L_3}\oplus\g_{L_1-L_3} )\cap \mathfrak{k}^\C_{\varrho},\\
& \g_3^\zeta\cap \mathfrak{k}^\C_{\varrho}=\g_{L_1+L_2},\quad
(\g_2^\zeta\oplus \g_3^\zeta)\cap \mathfrak{m}^\C_{\varrho}=\g_2^\zeta\cap \mathfrak{m}^\C_{\varrho}=(\g_{L_1- L_3}\oplus \g_{L_1+ L_3})\cap \mathfrak{m}^\C_{\varrho}.
\end{eqnarray*}

%
% Next we  fix, for each positive root $\alpha\in \Delta^+$, a suitable vector root space $X^\alpha\in\g_\alpha$ as follows: choose arbitrary  vector root spaces $X^{L_1-L_2}
% $ and  $X^{L_2-L_3}$; and set
% \begin{align*}
%  X^{L_1-L_3}&:=[X^{L_1-L_2},X^{L_2-L_3}],\,\,\,\,X^{L_2+L_3}:=\sigma_\varrho (X^{L_2-L_3}),  \\ X^{L_1+L_2}&:=[X^{L_2+L_3},X^{L_1-L_3}],\,\,\,\,\,X^{L_1+L_3}:=\sigma_\varrho (X^{L_1-L_3}).
% \end{align*}

Write
\begin{equation}\label{CC}
C=C_0+\lambda C_1+\lambda^2 C_2,\quad C_0=c_0^1+c_0^2+c_0^3,\quad C_1=c_1^2+c_1^3,\quad C_2=c_2^3\end{equation}
where the functions $c_0^i:S^2\to \g_i^\zeta\cap \mathfrak{k}^\C_{\varrho}$,  $c_1^i:S^2\to \g_i^\zeta\cap \mathfrak{m}^\C_{\varrho}$, and
$c_2^3:S^2\to \g_3^\zeta\cap \mathfrak{k}^\C_{\varrho}$ are meromorphic functions.
Clearly, $c_1^3=0$.
Consider the root vectors defined by \eqref{geradores}. Since $\sigma_\varrho(X_{2,3})=-Y_{2,3}$ and  $\sigma_\varrho(X_{1,3})=-Y_{1,3}$, we can write
\begin{align*}
  c_0^1=aX_{1,2}+b(X_{2,3}-Y_{2,3}),\,\,c_0^2=c(X_{1,3}-Y_{1,3}),\,\, c_0^3=dY_{1,2},\,\,
  c_1^2=e(X_{1,3}+Y_{1,3}),\,\,c_2^3=fX_{1,2}
\end{align*}
  in terms of $\C$-valued meromorphic functions $a$, $b$, $c$, $d$, $e$,  $f$.

Taking into account the results of Section \ref{weircond}, $\Phi=\exp{C}\cdot \gamma_{\zeta}$ is an
 extended solution if and only if, in the expression
$$(\exp C)^{-1}(\exp C)_z=C_z-\frac{1}{2!}(\mathrm{ad} C)C_z+\frac{1}{3!}(\mathrm{ad} C)^2C_z,$$
we have:
\begin{enumerate}
  \item[a)] the independent coefficient should have zero component in each $\g_{2}^{\zeta}$ and $\g_{3}^{\zeta},$ that is
  \begin{align}\label{o}
    {c_0^2}_z-\frac{1}{2}[c_0^1,{c_0^1}_z]=0,\,\,\,\, {c_0^3}_z-\frac{1}{2}[c_0^1,{c_0^2}_z]-\frac{1}{2}[c_0^2,{c_0^1}_z]+\frac16[c_0^1,[c_0^1,{c_0^1}_z]]=0;
  \end{align}
   \item[b)] the $\lambda$ coefficient should have zero component in  $\g_{3}^{\zeta},$ that is
   \begin{equation}\label{1}
     [c_0^1,{c_1^2}_z]+[c_1^2,{c_0^1}_z]=0.
   \end{equation}
\end{enumerate}
From equations \eqref{o} we get the equations (prime denotes $z$-derivative)
\begin{equation}\label{merocond}
2c'=ab'-ba',\,\,\,\,\, 3d'=3 cb'-bc';
\end{equation}
on the other hand, observe that \eqref{1} always holds since $$[c_0^1,{c_1^2}_z]+[c_1^2,{c_0^1}_z]\in [\g_1^\zeta\cap \mathfrak{k}^\C_{\varrho},\g_2^\zeta\cap \mathfrak{m}^\C_{\varrho}]\subset \g_3^\zeta\cap \mathfrak{m}^\C_{\varrho}=\{0\}.$$

Hence we conclude that, any extended solution $\Phi:S^2\setminus D\to U^{\sigma_{\varrho}}_{\zeta}(SO(6))$, with $\zeta=\zeta_1+\zeta_2$, of the form
$\Phi=\exp{C}\cdot \gamma_{\zeta}$, can be constructed as follows: choose arbitrary meromorphic functions $a$, $b$, $e$  and $f$; integrate equations \eqref{merocond} to obtain the meromorphic functions $c$ and $d$; $C$ is then given by \eqref{CC}.

\begin{eg}
 Choose $a(z)=b(z)=z$. From \eqref{merocond}, we can take $c(z)=1$ and $d(z)=z$. This data defines the matrix $C_0$ and the $S^1$-invariant extended solution $\Phi^0=\exp C_0\cdot \gamma_{\zeta}$, where  the loop $\gamma_\zeta$, with $\zeta=\zeta_1+\zeta_2$, is given by \eqref{zeta1+zeta2}.
The extended solutions $\Phi:S^2\to U_{\zeta}^{\sigma_\varrho}(SO(6))$ satisfying $\Phi^0=u_{\zeta}\circ\Phi$ are of the form $\Phi=\exp C\cdot \gamma_\zeta$, where the matrix $C=C_0+C_1\lambda+C_2\lambda^2$ is given by
 $$C= \left(
          \begin{array}{cccccc}
            0 & z & 1 & 0 & z & -1 \\
            0 & 0 & z & -z & 0 & -z \\
            0 & 0 & 0 & 1 & z & 0 \\
            0 & 0 & 0 & 0 & 0 & 0 \\
            0 & 0 & 0 & -z & 0 & 0 \\
            0 & 0 & 0 & -1 & -z & 0 \\
          \end{array}
        \right)+ \left(
          \begin{array}{cccccc}
            0 & 0 & e\lambda & 0 & f\lambda^2 & -e\lambda \\
            0 & 0 & 0 & -f\lambda^2 & 0 & 0 \\
            0 & 0 & 0 & e\lambda & 0 & 0 \\
            0 & 0 & 0 & 0 & 0 & 0 \\
            0 & 0 & 0 & 0 & 0 & 0 \\
            0 & 0 & 0 & -e\lambda & 0 & 0 \\
          \end{array}
        \right),$$
 with respect to the complex orthonormal basis $\mathbf{u}=\{u_1,u_2,u_3,\overline{u}_1,\overline{u}_2,\overline{u}_3\}$, where $e$ and $f$ are arbitrary meromorphic functions on $S^2$.
The
 holomorphic vector bundles $V$ and $W$ associated to the $S^1$-invariant extended solution $\exp C_0\cdot \gamma_{\zeta}$ are given by $V=\exp{C_0}\cdot {V}_1$ and $W=\exp{C_0}\cdot V_2$, and we have, with respect to  the basis $\mathbf{u}$,
\begin{align*}
  V&=\mathrm{span}\{(12-12z^2-z^4,-4z^3,12-6z^2,12,-12z,-12+6z^2 ) \}\\
  W&=\mathrm{span}\{(6z+z^3,3z^2,3z,0,3,-3z ) \}\oplus V.
\end{align*}

\end{eg}

\subsection{Outer symmetric $SU(2n+1)$-spaces.}\label{su(2n+1)-outer}
%\subsubsection{The Lie algebra $\mathfrak{su}(m)$.}

Let $E_j$ be the square $(m\times m)$-matrix whose $(j,j)$-entry is $\mathrm{i}$ and all other entries are $0$.
The complexification $\lt^\C$ of the algebra $\lt$ of diagonal matrices $\sum a_iE_i$, with $a_i\in \mathbb{C}$ and $\sum a_i=0$,   is a Cartan subalgebra of $\mathfrak{su}(m)^\C$. Let $\{L_1,\ldots,L_{m}\}$ be the dual basis of $\{E_1,\ldots,E_{m}\}$, that is  $L_i(E_j)=\mathrm{i}\delta_{ij}$. The roots of $\mathfrak{su}(m)$ are the vectors $ L_i- L_j$, with $i\neq j$ and $1\leq i,j\leq m-1$ and $\Delta^+=\{L_i-L_j\}_{i<j}$ is a positive root system with positive simple roots $\alpha_i=L_i-L_{i+1}$, for $1\leq i\leq m-1$.
For $i\neq j$, the matrix $X_{i,j}$ whose $(i,j)$ entry is $1$ and all other entries are $0$ generate the root space $\g_{L_i-L_j}$.
The dual basis of $\Delta_0=\{\alpha_1,\ldots,\alpha_{m-1}\}$
in $\mathrm{i}\mathfrak{t}^*$ is formed by the matrices
\begin{align*}
H_i=\frac{m-i}{m}(E_1+\ldots+ E_i)- \frac{i}{m}(E_{i+1}+\ldots+ E_{m}).
\end{align*}

\subsubsection{Special Lagrangian spaces}

Consider on $\mathbb{R}^{2m}$ the standard inner product $\langle\cdot,\cdot\rangle$ and the canonical orthonormal basis $\mathbf{e}^{2m}=\{e_1,\ldots, e_{2m}\}.$
Define the orthogonal complex structure $I$  by $I(e_i)=e_{2m+1-i}$, for $i\in \{1,\ldots,m\}$. A \emph{Lagrangian subspace} of $\mathbb{R}^{2m}$ (with respect to $I$) is a $m$-dimensional subspace $L$ such that $IL\perp L$.  Let $\mathcal{L}_m$ be the space of all
Lagrangian subspaces of  $\mathbb{R}^{2m}$ and $L_0\in\mathcal{L}_m$ the Lagrangian subspace generated by $\mathbf{e}^{m}=\{e_1,\ldots,e_m\}$. The unitary group $U(m)$ acts transitively on  $\mathcal{L}_m$, with isotropy group at $L_0$ equal to $SO(m)$, and  $\mathcal{L}_m$
 is a reducible symmetric space that can be identified with $U(m)/SO(m)$ (see \cite{Zi} for details).

  The space $\mathcal{L}_m$ can also be interpreted as the set of all orthogonal linear maps $\tau:\mathbb{R}^{2m}\to\mathbb{R}^{2m}$ satisfying $\tau^2=e$ and $I\tau=-\tau I$. Indeed, if $V_\pm$ are the $\pm 1$ eigenspaces of $\tau$, then $IV_+=V_-$ and $IV_+\perp V_+$, that is $V_+$ is Lagrangian. From this point of view, $U(m)$ acts on $\mathcal{L}_m$ by conjugation: $g\cdot \tau=g\tau g^{-1}$. Let $\tau_0\in \mathcal{L}_m$ be the orthogonal linear map corresponding to $L_0$, that is, ${\tau_0}_{|_{L_0}}=e$ and ${\tau_0}_{|_{IL_0}}=-e$.
The corresponding involution on $U(m)$ is given by $\sigma(g)=\tau_0g\tau_0$ and the Cartan embedding $\iota:\mathcal{L}_m\hookrightarrow U(m)$ is given by $\iota(\tau)=\tau \tau_0$.

%Observe that if we interpret $U(m)$ as the the matrices in $SO(2m)$ which commute with the complex structure $J$
%\begin{equation}\label{mergulho}
%A+B\mathrm{i}\mapsto \left(
%                              \begin{array}{cc}
%                                A & -B \\
%                                B & A \\
%                              \end{array}
%                            \right),\end{equation}
%                            the complex conjugation corresponds to the Cartan involution of $G_m(\mathbb{R}^{2m})$.

The totally geodesic submanifold $\mathcal{L}^s_m:=SU(m)/SO(m)$ of  $U(m)/SO(m)$ is also known as the \emph{space of special Lagrangian subspaces of $\mathbb{R}^{2m}$}. It is an irreducible outer symmetric $SU(m)$-space.

\subsubsection{Harmonic maps into $\mathcal{L}^s_{2n+1}$}

Take $m=2n+1$. The non-trivial involution $\varrho$ of the Dynkin diagram of $\mathfrak{su}(2n+1)^\C$ is given by $\varrho(\alpha_i)=\alpha_{2n+1-i}$. In particular, $\varrho$ does not fix any root in $\Delta_0$ and there exists only one class of outer symmetric $SU(2n+1)$-spaces. The semi-fundamental basis $\pi_{\mathfrak{k}_\varrho}(\Delta_0)=\{\beta_1,\ldots,\beta_{n}\}$ is given by
$\beta_i=\frac12(\alpha_i+\alpha_{2n+1-i})$ whereas the dual basis $\{\zeta_1,\ldots,\zeta_{n}\}$ is given by
$$\zeta_i=H_i+H_{2n+1-i}=E_1+\ldots+E_i-(E_{2n+2-i}+\ldots +E_{2n+1}),$$
for $1\leq i\leq n$. Since each $\zeta_i$ belongs to the integer lattice $\mathfrak{I}(SU(2n+1))$,
the $\varrho$-semi-canonical elements of $SU(2n+1)$ are precisely the elements $\zeta=\sum_{i=1}^{n}m_i\zeta_i$ with $m_i\in\{0,1,2\}$.

Let $\mathbf{e}^{2n+1}=\{e_1,\ldots,e_{2n+1}\}$ be the canonical orthonormal basis of $\mathbb{R}^{2n+1}$. Identify $\C^{2n+1}$ with $(\mathbb{R}^{4n+2},I)$, where $I$ is defined as above.
Set
$$v_j=\frac{1}{\sqrt2}(e_j+\mathrm{i}e_{2n+2-j}),$$
for $1\leq j\leq n$, $v_{n+1}=e_{n+1}$ and $v_{2n+2-j}=\overline{v}_j$. Take the matrices $E_j$ with respect to the complex basis $\mathbf{v}=\{v_1,\ldots,v_{2n+1}\}$ of $\C^{2n+1}$.
Hence $\tau_0E_j\tau_0=-E_{2n+2-j}$ and
the fundamental involution $\sigma_\varrho$ is given by $\sigma_\varrho(g)=\tau_0g\tau_0$.
The fundamental outer symmetric $SU(2n+1)$-space is the space of special Lagrangian subspaces $\mathcal{L}_{2n+1}^s=SU(2n+1)/SO(2n+1)$, and this is the unique  outer symmetric $SU(2n+1)$-space.

%
%\begin{thm}
%  Each harmonic map $\varphi:S^2\to \mathcal{L}_{2n+1}^s$ belongs to one of the following classes: $(\sum_{i\in I}\zeta_i, \sigma_\varrho)$, with $I\subseteq \{1,\ldots,n\}$.
%\end{thm}
\vspace{.20in}

Next we consider in detail harmonic maps into $\mathcal{L}_{3}^s$. In this case we have two non-zero $\varrho$-semi-canonical elements, $\zeta_1$ and $2\zeta_1$, and consequently two classes of harmonic maps, $(\zeta_1,\sigma_\varrho)$ and $(\zeta_1,\sigma_{\varrho,1})$.
Since $\zeta_1=E_1-E_3$, we have
$r(\zeta_1)=(L_1-L_3)(\zeta_1)/\mathrm{i}=2$. Let $W_1$, $W_2$ and $W_3$ be the complex one-dimensional images of $E_1$, $E_2$ and $E_3$, respectively. Any extended solution $$\Phi:S^2\setminus D\to U_{\zeta_1}^{\sigma_\varrho}(SU(2n+1))$$ is given by $\Phi=\exp C\cdot \gamma_{\zeta_1}$, with $\gamma_{\zeta_1}(\lambda)=\lambda^{-1}\pi_{W_3}+\pi_{W_2}+\lambda \pi_{W_1}$, for some holomorphic vector-valued function
$C:S^2\setminus D\to (\mathfrak{u}^0_{\zeta_1})_{\sigma_\varrho}$, where
$$(\mathfrak{u}^0_{\zeta_1})_{\sigma_\varrho}=(\mathfrak{p}_0^{\zeta_1})^\perp\cap\mathfrak{k}^\C_{\varrho}+\lambda(\mathfrak{p}_1^{\zeta_1})^\perp\cap
\mathfrak{m}^\C_{\varrho} $$
and
\begin{align*}
 (\mathfrak{p}_0^{\zeta_1})^\perp\cap\mathfrak{k}^\C_{\varrho}=(\g_{L_1-L_2}\oplus\g_{L_2-L_3}\oplus \g_{L_1-L_3})\cap\mathfrak{k}^\C_{\varrho},\quad (\mathfrak{p}_1^{\zeta_1})^\perp\cap\mathfrak{m}^\C_{\varrho}= \g_{L_1-L_3}\cap\mathfrak{m}^\C_{\varrho}.
\end{align*}

Let $X_{i,j}$ be the square matrix whose $(i,j)$ entry is $1$ and all the other entries are $0$, with respect to the basis $\mathbf{v}$. The root space $\mathfrak{g}_{L_i-L_j}$ is spanned by $X_{i,j}$. We have
$\sigma_{\varrho}(X_{1,2})=-X_{2,3}$ and $\sigma_\varrho(X_{1,3})= -X_{1,3}$ (consequently, $\mathfrak{g}_{L_1-L_3}\subset \mathfrak{m}_{\varrho}^\C$).    Hence we can write $C=C_0+C_1\lambda$, with
$C_0=a(X_{1,2}-X_{2,3})$ and  $C_1=bX_{1,3}$,
for some meromorphic functions $a,b$ on $S^2$. The harmonicity equations do not impose any condition on these meromorphic functions, hence any harmonic map
$\varphi:S^2\to \mathcal{L}_{3}^s$ in the class $(\zeta_1,\sigma_\varrho)$ admits an extended solution of the form
\begin{equation}\label{su1}
\Phi=\exp\left(
            \begin{array}{ccc}
              0 & a & b\lambda \\
              0 & 0 & -a \\
              0 & 0 & 0 \\
            \end{array}
          \right)\cdot \gamma_{\zeta_1}=\left(
                                          \begin{array}{ccc}
                                            1 & a & \frac12(-a^2+2b\lambda) \\
                                            0 & 1 & -a \\
                                            0 & 0 & 1 \\
                                          \end{array}
                                        \right)\cdot \gamma_{\zeta_1},\end{equation}
                                        and $\varphi$ is recovered by setting $\varphi=\Phi_{-1}\tau_0$. Similarly, one can see that the class of harmonic maps in $(\zeta_1,\sigma_{\varrho,1})$
                                    admits an extended solution of the form
\begin{equation}\label{su2}\Phi=\left(
                                          \begin{array}{ccc}
                                            1 & a & \frac12(a^2+2b\lambda) \\
                                            0 & 1 & a \\
                                            0 & 0 & 1 \\
                                          \end{array}
                                        \right)\cdot \gamma_{\zeta_1},\end{equation}
                                        with no restrictions on the meromorphic functions $a$ and $b$.

H. Ma established (cf. Theorem 4.1 of \cite{Ma}) that harmonic maps $\varphi:S^2\to \mathcal{L}_{3}^s$ are essentially of two types: 1) $\iota_\sigma\circ \varphi$ is a \emph{Grassmannian solution} obtained from a full harmonic  map $f:S^2\to \mathbb{R}P^2\subset \mathbb{C}P^2$, where $\iota_\sigma$ is the Cartan embedding of $ \mathcal{L}_{3}^s$ in $SU(3)$; 2) up to left multiplication by a constant, $\iota_\sigma\circ \varphi$ is of the form $(\pi_{\beta_1}- \pi_{\beta_1}^\perp) (\pi_{\beta_2}- \pi_{\beta_2}^\perp)$, where $\beta_1$ is a \emph{Frenet pair} associated to a full totally istotropic holomorphic map $g:S^2\to\mathbb{C}P^2$ and $\beta_2$ is a rank $1$ holomorphic subbundle of $G'(g)^\perp$, where $G'(g)$ is the first \emph{Gauss bundle} of $g$. Observe that if, in the second case,  $\beta_2$ coincides with $g$, then $\iota_\sigma\circ\varphi$ is a {Grassmannian solution} obtained from the full harmonic  map $f:=G'(g)$ from $S^2$ to $\mathbb{R}P^2$, that is,  $\varphi$ is of type 1). Comparing this with our description, it is not difficult to see that harmonic maps of type 1) are $S^1$-invariant extended solutions (take $b=0$ in \eqref{su1} and \eqref{su2}) and   harmonic maps of type 2) are associated to extended solutions with values in the corresponding unstable manifolds (which corresponds to an arbitrary choice of $b$ in \eqref{su1} and \eqref{su2}). H. Ma also established a  purely algebraic explicit construction of such harmonic maps in terms of meromorphic data on $S^2$, which is consistent with our results.

%Under the natural embedding  $SU(3)\hookrightarrow SO(6)$, harmonic maps into $SU(3)/SO(3)$ belong to the classes $(\zeta_2,\sigma_{\varrho,1})$ and $(\zeta_2,\mathrm{Ad}(\exp\pi\zeta_2)\circ\sigma_{\varrho,1})$ of harmonic maps into $G_3(\mathbb{R}^6)$. This can be seen as follows. Fix on $\mathbb{R}^6$ the canonical basis $\mathbf{e}^6=\{e_1,e_2,e_3,e_4,e_5,e_6\}$ and consider the complex structure $I$ defined as above: $Ie_1=e_6$, $Ie_2=e_5$ and $Ie_3=e_4$.
%Identify $\C^3\cong(\mathbb{R}^6,I)$.
%Define the complex
%basis $\{u_1,u_2,u_3,\overline{u}_1,\overline{u}_2,\overline{u}_3\}$ of $\mathbb{C}^6=(\mathbb{R}^6)^\C$ by
%$$u_1=\frac{1}{\sqrt 2}(e_1+\mathrm{i}e_3),\,\, u_2=\frac{1}{\sqrt 2}(Ie_1+\mathrm{i}Ie_3),\,\,\, u_3=\frac{1}{\sqrt 2}(Ie_2+\mathrm{i}e_2),$$
%which satisfies (\ref{us}).
%With respect to these choices, the image of the $\varrho$-canonical element $\zeta_1$ of $\mathfrak{su}(3)$ under the natural embedding $SU(3)\hookrightarrow SO(6)$  is precisely the $\varrho$-canonical element $\zeta_2$ of $\mathfrak{so}(6)$ given by (\ref{zetaii}).

\subsection{Outer symmetric $SU(2n)$-spaces.}With the same notations of Section \ref{su(2n+1)-outer}, the non-trivial involution $\varrho$ of the Dynkin diagram of $\mathfrak{su}(2n)$ is given by $\varrho(\alpha_i)=\alpha_{2n-i}$, and $\varrho$ fixes the root $\alpha_n$. The semi-fundamental basis $\pi_{\mathfrak{k}_\varrho}(\Delta_0)=\{\beta_1,\ldots,\beta_{n-1}\}$ is given by $\beta_1=\alpha_n$ and $\beta_i=\frac12(\alpha_i+\alpha_{2n-i})$ if $i\geq 2$; whereas its dual basis $\{\zeta_1,\ldots,\zeta_{n-1}\}$ is given by
\begin{align*}
  \zeta_1&=H_n=\frac12({E_1}+\ldots+E_n)-\frac12(E_{n+1}+\ldots+ E_{2n})\\
  \zeta_i&= H_{i-1}+H_{2n-i+1}=E_1+\ldots+E_{i-1}-(E_{2n+2-i}+\ldots+E_{2n}),\,\,\,\,\mbox{for $2\leq i\leq n-1$}.
\end{align*}

By Theorem \ref{murak}, there exist two conjugacy classes of outer involutions: the fundamental outer involution $\sigma_\varrho$ and $\sigma_{\varrho,1}$. These outer involutions correspond to the symmetric spaces $SU(2n)/Sp(n)$ and $SU(2n)/SO(2n)$, respectively.  Observe that $\zeta_1$ does not belong to the integer lattice $\mathfrak{I}'(SU(2n)^{\sigma_\varrho})$ since $\exp 2\pi\zeta_1=-e$.

\subsubsection{Harmonic maps into the space of special unitary quaternionic structures on $\mathbb{C}^{2n}$. }

 A \emph{unitary quaterninonic structure} on the standard hermitian space $(\mathbb{C}^{2n},\langle\cdot,\cdot\rangle )$ is a conjugate linear map $J:\C^{2n}\to \C^{2n}$ satisfying $J^2=-Id$ and
 $\langle v,w\rangle= \langle J\,w,J\,v\rangle$ for all $v,w\in \C^{2n}$. Consider as base point the quaternionic structure $J_o$ defined by $J_o(e_i)=e_{2n+1-i}$ for each $1\leq i\leq n$, where
 $\mathbf{e}^{2n}=\{e_1,\ldots,e_{2n}\}$
 is the canonical hermitian basis of $\mathbb{C}^{2n}$. The unitary group $U(2n)$ acts transitively on the space  of unitary quaternionic structures on $\C^{2n}$ with isotropy group at $J_o$ equal to  $Sp(n)$, and thus $M=U(2n)/Sp(n)$. This is a reducible symmetric space with involution $\sigma:U(2n)\to U(2n)$ given by $\sigma(X)=J_oXJ_o^{-1}$, but the totally geodesic submanifold $\mathcal{Q}^s_n:=SU(2n)/Sp(n)$ is an irreducible symmetric space, which we call the space of \emph{special unitary quaternionic structures} on   $\mathbb{C}^{2n}$ (see \cite{Zi} for details).
 If we consider the matrices $E_i$ with respect to the complex basis $\mathbf{v}=\{v_1,\ldots,v_{2n}\}$ defined by
 \begin{equation}\label{ves}
 v_j=\frac{1}{\sqrt2}(e_j+\mathrm{i}e_{2n+1-j}),
 \end{equation}
 for $1\leq j\leq n$, and $v_{2n+1-j}=\overline{v}_j$, we see that $J_oE_jJ_o^{-1}=-E_{2n+1-j}$, and consequently
  we have $\sigma=\sigma_\varrho$.

 \vspace{.10in}

 Next we consider with detail harmonic maps into $\mathcal{Q}^s_2$.
\begin{prop}
  Each harmonic map $\varphi:S^2\to\mathcal{Q}^s_2$ belongs to one of the following classes: $(2\zeta_1,\sigma_\varrho)$, and $(\zeta_2,\sigma_{\varrho,2})$.
\end{prop}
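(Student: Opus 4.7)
The plan is to execute the classification scheme of Section \ref{classs} for $G=SU(4)$ with $N=\mathcal{Q}^s_2=P^{\sigma_\varrho}_e$. By Theorem \ref{tinva} every harmonic map $\varphi:S^2\to\mathcal{Q}^s_2$ admits a $T_{\sigma_\varrho}$-invariant extended solution taking values in some $U^{\sigma_\varrho}_{\zeta'}(SU(4))$ with $P^{\sigma_\varrho}_{\zeta'}=P^{\sigma_\varrho}_e$, and by Corollary \ref{crorol} we may take $\zeta'$ to be $\varrho$-semi-canonical. The task then reduces to running through the finite list of semi-canonical $\zeta'$ and reading off the resulting normalized pair $(\zeta,\tau)$.

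First I would enumerate the $\varrho$-canonical and $\varrho$-semi-canonical elements of $SU(4)$. Writing $\zeta_1=H_2$ and $\zeta_2=H_1+H_3$ in the basis dual to $\Delta_0$, the partial order $\preceq$ reverses coordinatewise comparison on the $H_i$-coefficients (see Section \ref{BGnorm}), and the lattice $\mathfrak{I}'(SU(4)^{\sigma_\varrho})$ consists of elements $nH_1+mH_2+nH_3$ with $m$ even. Since $\exp(2\pi\zeta_1)=-I$ we have $m_1=2$ while $m_2=1$, so the $\varrho$-canonical elements come out as $2\zeta_1$, $\zeta_2$ and $2\zeta_1+\zeta_2$, and the semi-canonical ones are $n_1\zeta_1+n_2\zeta_2$ with $n_1\in\{2,4\}$ when $1\in I$ and $n_2\in\{1,2\}$ when $2\in I$.

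Next, I would decide which semi-canonical $\zeta'$ satisfy $P^{\sigma_\varrho}_{\zeta'}=P^{\sigma_\varrho}_e$. Using $\sigma_\varrho(E_j)=-E_{5-j}$, one checks that $\exp(\pi\zeta')\in\{I,-I\}$ precisely when $n_2$ is even, whereas for odd $n_2$ a dimension count via Proposition \ref{concomp}(d) yields $\dim P^{\sigma_\varrho}_{\zeta'}=9$ and identifies the component as a copy of $SU(4)/SO(4)$, ruling it out. The containment $\{I,-I\}\subset P^{\sigma_\varrho}_e$ itself is witnessed by the explicit element $g=\exp\tfrac{\pi}{2}(E_1-E_2-E_3+E_4)=\mathrm{diag}(i,-i,-i,i)\in SU(4)$, for which $g\sigma_\varrho(g^{-1})=g^2=-I$. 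Hence the admissible values are $\zeta'\in\{2\zeta_1,4\zeta_1,2\zeta_2,2\zeta_1+2\zeta_2,4\zeta_1+2\zeta_2\}$.

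Finally, Proposition \ref{norminf} converts each admissible $\zeta'$ into a normalized pair $(\zeta,\tau)$ with $\tau=\mathrm{Ad}(\exp(\pi(\zeta'-\zeta)))\circ\sigma_\varrho$: the cases $\zeta'\in\{2\zeta_1,4\zeta_1\}$ yield $(2\zeta_1,\sigma_\varrho)$ because the shift $\exp(\pi(\zeta'-2\zeta_1))$ lies in the center $\{I,-I\}$ and acts trivially by $\mathrm{Ad}$, while $\zeta'=2\zeta_2$ yields $(\zeta_2,\sigma_{\varrho,2})$ directly. The main obstacle is the two mixed values $\zeta'\in\{2\zeta_1+2\zeta_2,4\zeta_1+2\zeta_2\}$, whose naive normalization produces the extra candidate $(2\zeta_1+\zeta_2,\sigma_{\varrho,2})$; I would dispose of these by exploiting the central element $\exp(2\pi\zeta_1)=-I$ to perform a further constant-loop translation that absorbs the $2\zeta_1$ summand in $\zeta$ and brings the extended solution into $U^{\sigma_{\varrho,2}}_{\zeta_2}$, thereby collapsing it into the class $(\zeta_2,\sigma_{\varrho,2})$.
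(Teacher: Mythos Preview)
Your overall plan follows the paper's, but the key step where you rule out the semi-canonical elements with $n_2$ odd contains a genuine error. You claim that for such $\zeta'$ the component $P^{\sigma_\varrho}_{\zeta'}$ has dimension $9$ and is a copy of $SU(4)/SO(4)$. This is false: by Proposition~\ref{fundcan}, since $n_1$ is even for every $\varrho$-semi-canonical element of $SU(4)$, \emph{all} of them yield components isomorphic to the fundamental space $\mathcal{Q}^s_2$, of dimension $5$. A direct check using Proposition~\ref{concomp}(d) for $\zeta'=\zeta_2$ confirms this: the root $L_1-L_4$ lies in $\Delta(\mathfrak{k}_\varrho)$, so $\g^{\zeta_2}_{\pm 2}\cap\mathfrak{m}^\C_\varrho=0$, and one finds $\dim\mathfrak{m}_\tau=5$, not $9$. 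The correct argument---the paper's---is that although $P^{\sigma_\varrho}_{\zeta_2}$ and $P^{\sigma_\varrho}_{2\zeta_1+\zeta_2}$ are copies of $\mathcal{Q}^s_2$, they are a \emph{different connected component} of $P^{\sigma_\varrho}$ from $P^{\sigma_\varrho}_e$: one exhibits $g\in U(4)$ with $gJ_og^{-1}=\exp(\pi\zeta_2)J_o$ and observes $\det g\neq 1$, so no such $g$ exists in $SU(4)$. Your conclusion (ruling these $\zeta'$ out) happens to be right, but the reasoning you give does not establish it.

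Your final reduction of $(2\zeta_1+\zeta_2,\sigma_{\varrho,2})$ to $(\zeta_2,\sigma_{\varrho,2})$ is also incomplete. Saying that $\exp(2\pi\zeta_1)=-I$ is central only tells you that the involution $\tau$ is unchanged when passing from $\xi=2\zeta_1+\zeta_2$ to $\xi'=\zeta_2$; it does \emph{not} by itself produce a constant loop allowing the translation. The nontrivial content is that $\mathcal{U}_{2\zeta_1+\zeta_2,\,2\zeta_1}$ sends every $T_{\sigma_{\varrho,2}}$-invariant extended solution to a constant loop, and this requires verifying the hypotheses \eqref{toing2} of Proposition~\ref{norm2} with $\sigma=\sigma_{\varrho,2}$. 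The paper does exactly this: the only positive root with $\alpha(2\zeta_1)=\alpha(2\zeta_1+\zeta_2)$ is $L_2-L_3$, and $\g_{L_2-L_3}\subset\mathfrak{k}^\C_{\sigma_{\varrho,2}}$, so the conditions hold.
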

  \begin{proof}We start by identifying $\mathcal{Q}^s_2$ with $P_e^{\sigma_\varrho}$.

  The $\varrho$-semi-canonical elements of $SU(4)$ are precisely the elements
$$2\zeta_1,\,4\zeta_1,\,\zeta_2,\,2\zeta_2,\,2\zeta_1+\zeta_2,\,2\zeta_1+2\zeta_2,\,4\zeta_1+\zeta_2,\,4\zeta_1+2\zeta_2.$$
 By Proposition \ref{fundcan}, all these elements correspond to  the symmetric space $\mathcal{Q}^s_2$.

 We claim that $\exp\pi \zeta_2$ is not in the connected component $$P_e^{\sigma_\varrho}=\{gJ_og^{-1}J_o^{-1}|\,g\in SU(4)\}.$$
 In fact, $\exp(\pi \zeta_2)J_o=gJ_og^{-1}\cong gSp(n)$ for the unitary transformation $g$ defined by $g(e_1)=e_4$, $g(e_4)=e_1$, $g(e_2)=e_3$ and $g(e_3)=-e_2$. Since $\det g\neq 1$ we conclude that $\exp\pi \zeta_2$ does not belong to $P_e^{\sigma_\varrho}$. Similarly, one can check that $\exp\pi (2\zeta_1+\zeta_2)$ is not in $P_e^{\sigma_\varrho}$.

 Hence, since $\exp\pi2\zeta_1$ belongs to the centre of $SU(4)$, any harmonic map $\varphi:S^2\to\mathcal{Q}^s_2\cong P_e^{\sigma_\varrho} $ belongs to one of the following classes:
 $(2\zeta_1,\sigma_\varrho)$, $(\zeta_2,\sigma_{\varrho,2})$, and $(2\zeta_1+\zeta_2,\sigma_{\varrho,2})$.  It remains to check that, in view of Proposition \ref{norm2}, harmonic maps in the class
 $(2\zeta_1+\zeta_2,\sigma_{\varrho,2})$ can be normalized to harmonic maps in the class $(\zeta_2,\sigma_{\varrho,2})$.

It is clear that $2\zeta_1+\zeta_2\preceq \zeta_2$. On the other hand,  for any positive root $L_i-L_j\in \Delta^+$, with $i<j$, we have $(L_i-L_j)(2\zeta_1)/\mathrm{i}\leq (L_i-L_j)({2\zeta_1+\zeta_2})/\mathrm{i},$ where the equality holds in just one case: $(L_2-L_3)(2\zeta_1)= (L_2-L_3)(2\zeta_1+\zeta_2)=2\mathrm{i}.$ However, $\mathfrak{g}_{L_2-L_3}\subset \mathfrak{k}_{\sigma_{\varrho,2}}$, which means that the conditions of Proposition \ref{norm2} hold for $\zeta=2\zeta_1+\zeta_2$ and $\zeta'=\zeta_2$, and consequently harmonic maps in the class
 $(2\zeta_1+\zeta_2,\sigma_{\varrho,2})$ can be normalized to harmonic maps in the class $(\zeta_2,\sigma_{\varrho,2})$.
  \end{proof}

 Following the same procedure as before, one can see that any harmonic map $\varphi\to\mathcal{Q}^s_2$ in the class $(2\zeta_1,\sigma_\varrho)$ admits an extended solution of the form
 $$\Phi=\left(
          \begin{array}{cccc}
            1 & 0 & c_1+a\lambda & c_2 \\
            0 & 1 & c_3 & c_1-a\lambda \\
            0 & 0 & 1 & 0 \\
            0 & 0 & 0 & 1 \\
          \end{array}
        \right)\cdot \gamma_{2\zeta_1},$$
        where $c_1,c_2,c_3\in\C$ are constants, $a$ is a meromorphic function on $S^2$.  The harmonic map  is recovered by setting $\varphi=\Phi_{-1}J_o$. Reciprocally, given arbitrary complex constants $c_1,c_2,c_3$ and a meromorphic function $a:S^2\to \C$,
         such $\Phi$ is an extended solution associated to some harmonic map in the class $(2\zeta_1,\sigma_\varrho)$ (the harmonicity equations do not impose any restriction to $a$).

 Similarly,  any harmonic map $\varphi\to\mathcal{Q}^s_2$ in the class $(\zeta_2,\sigma_{\varrho,2})$ admits an extended solution of the form
 $$\Phi=\left(
          \begin{array}{cccc}
            1 & b & a & c \\
            0 & 1 & 0 & a \\
            0 & 0 & 1 & -b \\
            0 & 0 & 0 & 1 \\
          \end{array}
        \right)\cdot \gamma_{\zeta_2},$$
  where $a$, $b$ and $c$ are meromorphic functions satisfying $c'=ba'-b'a$.
Since $P_{\zeta_2}^{\sigma_{\varrho,2}}=\exp(\pi\zeta_2)P_e^{\sigma_\varrho}$, the harmonic map  is recovered by setting $\varphi=\exp(\pi\zeta_2)\Phi_{-1}J_o$.

\subsubsection{Harmonic maps into $\mathcal{L}^s_{2n}$. }
The outer symmetric $SU(2n)$-space that corresponds to the involution $\sigma_{\varrho,1}$ is the space of special Lagrangian subspaces  $\mathcal{L}^s_{2n}\cong SU(2n)/SO(2n)$. Take as base point
the Lagrangian space $L_o=\mathrm{Span}\{e_1,\ldots,e_{2n}\}$ of $\mathbb{R}^{4n}$ and let $\tau_0$ be the corresponding conjugation, so that the Cartan embedding of $\mathcal{L}^s_{2n}$ into $SU(2n)$ is given by $\tau=g\tau_og^{-1}\mapsto g\tau_0 g^{-1} \tau\in P_e^{\sigma_{\varrho,1}}$.

\begin{lem}\label{lemafixe} For each $\zeta\in \mathfrak{I}(SU(2n)^{\sigma_{\varrho,1}})$ we have $\exp\pi\zeta\in  P_e^{\sigma_{\varrho,1}}$.
\end{lem}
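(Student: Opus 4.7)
The plan is in two parts: first, to verify that $\exp\pi\zeta\in P^{\sigma_{\varrho,1}}$, and then to show that it lies in the identity component $P_e^{\sigma_{\varrho,1}}$ by exhibiting an explicit preimage under the Cartan embedding.

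For the first part, since $\zeta\in\mathfrak{k}_{\sigma_{\varrho,1}}$ we have $\sigma_{\varrho,1}(\exp\pi\zeta)=\exp\pi\zeta$, and since $\zeta\in\mathfrak{I}(SU(2n)^{\sigma_{\varrho,1}})$ we have $\exp 2\pi\zeta=e$, whence $\exp\pi\zeta=(\exp\pi\zeta)^{-1}$. Combining, $\sigma_{\varrho,1}(\exp\pi\zeta)=(\exp\pi\zeta)^{-1}$, so $\exp\pi\zeta\in P^{\sigma_{\varrho,1}}$.

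For the second part, I observe that any outer involution of $\mathfrak{su}(2n)$ with fixed subalgebra isomorphic to $\mathfrak{so}(2n)$ is conjugate in $SU(2n)$ to complex conjugation $g\mapsto\bar g$, and such a conjugation carries identity components of $P^\sigma$ to identity components. So I may assume $\sigma_{\varrho,1}(g)=\bar g$; then $\mathfrak{k}_{\sigma_{\varrho,1}}=\mathfrak{so}(2n)$, $\mathfrak{m}_{\sigma_{\varrho,1}}=\mathrm{i}\cdot\mathrm{Sym}_0(2n,\mathbb{R})$ (real traceless symmetric matrices), and $\zeta$ becomes a real skew-symmetric matrix in the integer lattice. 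The element $\exp\pi\zeta$ is then real, orthogonal and of order $2$; being its own inverse it is also symmetric, so its eigenvalues lie in $\{\pm 1\}$. Moreover, $\det\exp\pi\zeta=e^{\pi\,\mathrm{tr}\zeta}=1$ forces the multiplicity $k$ of the eigenvalue $-1$ to be even.

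The decisive step is rewriting $\exp\pi\zeta=\exp(\mathrm{i}S)$ for some $S\in\mathrm{Sym}_0(2n,\mathbb{R})$. Diagonalising $\exp\pi\zeta=O\Lambda O^T$ with $O\in O(2n)$ and $\Lambda=\mathrm{diag}(\pm 1)$, I build a real diagonal $D_\mu$ with entries $0$ on the positions where $\Lambda_{jj}=1$, $+\pi$ on $k/2$ of the $-1$-positions, and $-\pi$ on the remaining $k/2$. Then $\exp(\mathrm{i}D_\mu)=\Lambda$ and $\mathrm{tr}(D_\mu)=0$, so $S:=OD_\mu O^T$ is real symmetric and traceless with $\exp(\mathrm{i}S)=\exp\pi\zeta$, that is, $\mathrm{i}S\in\mathfrak{m}_{\sigma_{\varrho,1}}$. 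Setting $g:=\exp(\mathrm{i}S/2)\in SU(2n)$, the realness of $S$ gives $\sigma_{\varrho,1}(g^{-1})=\overline{\exp(-\mathrm{i}S/2)}=g$, so $g\,\sigma_{\varrho,1}(g^{-1})=g^2=\exp(\mathrm{i}S)=\exp\pi\zeta$, placing $\exp\pi\zeta$ in the image of the Cartan embedding, namely $P_e^{\sigma_{\varrho,1}}$.

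The main obstacle is the tension between $\zeta$ lying in the ``fixed direction'' $\mathfrak{k}_{\sigma_{\varrho,1}}$ while $P_e^{\sigma_{\varrho,1}}$ is naturally parameterised by $\exp$ of elements in the complementary direction $\mathfrak{m}_{\sigma_{\varrho,1}}$; the resolution uses that order $2$ restricts the eigenvalues of $\exp\pi\zeta$ to $\pm 1$, and the parity of $k$ (forced by $\mathrm{tr}\zeta=0$) is exactly what permits the balanced $\pm\pi$ split needed to keep the exponent $S$ traceless.
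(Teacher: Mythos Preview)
Your proof is correct. The two arguments reach the same destination---an explicit $g\in SU(2n)$ with $g\,\sigma_{\varrho,1}(g^{-1})=\exp\pi\zeta$---by different routes.

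The paper works in coordinates: it uses that every $\zeta\in\mathfrak{I}(SU(2n)^{\sigma_{\varrho,1}})$ has the specific form $\zeta=\sum_{i=1}^n n_i(E_i-E_{2n+1-i})$, computes $\exp\pi\zeta=\pi_V-\pi_V^\perp$ with $V=\bigoplus_{n_i\text{ even}}\mathrm{Span}\{e_i,e_{2n+1-i}\}$, and then simply writes down a diagonal $g$ (entries $1$ on $V$, entries $\pm\mathrm{i}$ on $V^\perp$) satisfying $g\tau_0g^{-1}\tau_0=\exp\pi\zeta$. Your argument is coordinate-free: you observe that $\exp\pi\zeta$ is a real symmetric orthogonal matrix with an even number of $-1$ eigenvalues, and then use the spectral theorem to produce a traceless real symmetric $S$ with $\exp(\mathrm{i}S)=\exp\pi\zeta$, giving $g=\exp(\mathrm{i}S/2)$. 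The paper's version is quicker once the explicit lattice description is in hand; yours is more conceptual and would transfer verbatim to any involution of the form $g\mapsto\bar g$ on $SU(m)$. Incidentally, your reduction step is essentially vacuous here: in the paper's conventions $\sigma_{\varrho,1}(g)=\tau_0 g\tau_0$ with $\tau_0$ the conjugation fixing $L_o=\mathrm{Span}\{e_1,\dots,e_{2n}\}$, so $\sigma_{\varrho,1}$ already \emph{is} complex conjugation in the standard basis. (Even were that not so, conjugacy in $\mathrm{Aut}(SU(2n))$ rather than $\mathrm{Inn}$ would suffice for your transport of $P_e$, since any Lie algebra automorphism lifts to $SU(2n)$ and fixes $e$.)
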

\begin{proof}
 Each $\zeta\in \mathfrak{I}(SU(2n)^{\sigma_{\varrho,1}})$ can be written as
 $\zeta=\sum_{i=1}^{n}n_i(E_i-E_{2n+1-i}).$
Hence, $\exp\pi\zeta=\pi_V-\pi_{V}^\perp$, where $V=\bigoplus_{{n_i\, \mathrm{even}}}\mathrm{Span}\{e_i,e_{2n+1-i}\}$.
 Define $g\in SU(2n)$ as follows: if $n_i$ is even, then $g(e_i)=e_i$ and $g(e_{2n+1-i})=e_{2n+1-i}$; if $n_i$ is odd, then  $g(e_i)=\mathrm{i} e_{i}$ and $g(e_{2n+1-i})=-\mathrm{i}e_{2n+1-i}$. We have $\exp\pi\zeta=g\tau_0g^{-1}\tau_0$, that is $\exp\pi\zeta\in P_e^{\sigma_{\varrho,1}}$.
\end{proof}

Now, identify $\mathcal{L}^s_{2n}$ with  $P_e^{\sigma_{\varrho,1}}$ via its Cartan embedding.
 By Theorem \ref{tinva}, any harmonic map $\varphi:S^2\to P_e^{\sigma_{\varrho,1}}$ admits an extended solution
$\Phi:S^2\setminus D\to U_{\zeta'}^{\sigma_{\varrho,1}}(SU(2n))$, for some $\zeta'\in \mathfrak{I}'(SU(2n))\cap\mathfrak{k}_{\sigma_{\varrho,1}}$ and some discrete subset $D$.  We can assume that  $\zeta'$ is a  $\varrho$-semi-canonical element. The corresponding $S^1$-invariant solution $u_\zeta\circ \Phi$ takes values in
 $\Omega_{\xi}(SU(2n)^{\sigma_{\varrho,1}})$, with $\xi\in \mathfrak{I}'(SU(2n)^{\sigma_{\varrho,1}})$; and both $\Phi_{-1}$ and  $(u_\zeta\circ \Phi)_{-1}$ take values in $P_\xi^{\sigma_{\varrho,1}}$.
A priori,  $\xi$ can be different from $\zeta$ since $\sigma_{\varrho,1}$ is not a fundamental outer involution. However, by Lemma \ref{lemafixe} we have $P_\xi^{\sigma_{\varrho,1}}=P_e^{\sigma_{\varrho,1}}=P_{\zeta'}^{\sigma_{\varrho,1}}.$

 If $\zeta$ is a $\varrho$-canonical element such that $\zeta'\preceq \zeta$ and $\mathcal{U}_{\zeta',\zeta'-\zeta}(\Phi)$ is constant, then,
    taking into account Proposition \ref{norminf}, there exists  a $T_\tau$-invariant extended solution $\tilde\Phi: S^2\setminus D\to U^\tau_{\zeta}(SU(2n)),$  where \begin{equation}\label{invo1}\tau=\mathrm{Ad}(\exp \pi(\zeta'-\zeta))\circ \sigma_{\varrho,1}.\end{equation} such that $\tilde\Phi_{-1}$ take values in $P_\zeta^\tau$ and $\varphi$ is given up to isometry
by
\begin{equation}\label{varphil}\varphi=\exp(\zeta'-\zeta)\tilde{\Phi}_{-1}\tau_0.\end{equation}
We conclude that, given a pair $(\zeta,\tau)$, where  $\zeta\in \mathfrak{I}(SU(2n)^{\sigma_\varrho})$ is a $\varrho$-canonical element and $\tau$ is an outer involution of the form (\ref{invo1}), any extended solution $\tilde{\Phi}:S^2\setminus D\to U_\zeta^\tau(SU(2n)))$ gives rise via (\ref{varphil}) to an harmonic map $\varphi$ from the two-sphere into $\mathcal{L}^s_{2n}$ and, conversely, all harmonic two-spheres into $\mathcal{L}^s_{2n}$ arise in this way.

\vspace{.10in}
For  $\mathcal{L}^s_{4}$,
 since $\exp\pi2\zeta_1$ belongs to the centre of $SU(4)$, we have five classes of harmonic maps into $\mathcal{L}^s_{4}$:
 $$(2\zeta_1,\sigma_{\varrho,1}),\, (\zeta_2,\sigma_{\varrho,1}),\,(2\zeta_1+\zeta_2,\sigma_{\varrho,1})\, (\zeta_2,\mathrm{Ad}\exp\pi\zeta_2\circ \sigma_{\varrho,1}),\,(2\zeta_1+\zeta_2,\mathrm{Ad}\exp\pi\zeta_2\circ\sigma_{\varrho,1}).$$

Let us consider in detail the class $(\zeta_2, \sigma_{\varrho,1})$. Clearly $r(\zeta_2)=2$.
Let $W_1$, $W_2$, $W_3$ and $W_4$ be the complex one-dimensional images of $E_1$, $E_2$, $E_3$ and $E_4$, respectively.
 That is, $W_i=\mathrm{Span}\{v_i\}$, where $v_i$ are defined by (\ref{ves}).
 Any extended solution $\Phi:S^2\setminus D\to U_{\zeta_2}^{\sigma_{\varrho,1}}$ is given by $\Phi=\exp C\cdot \gamma_{\zeta_2}$, with $\gamma_{\zeta_2}(\lambda)=\lambda^{-1}\pi_{W_4}+\pi_{W_3\oplus W_2 }+\lambda \pi_{W_1}$, for some holomorphic vector-valued function
$C:S^2\setminus D\to (\mathfrak{u}^0_{\zeta_2})_{\sigma_{\varrho,1}}$, where
$$(\mathfrak{u}^0_{\zeta_2})_{\sigma_{\varrho,1}}=(\mathfrak{p}_0^{\zeta_2})^\perp\cap\mathfrak{k}^\C_{\sigma_{\varrho,1}}+\lambda(\mathfrak{p}_1^{\zeta_2})^\perp\cap
\mathfrak{m}^\C_{\sigma_{\varrho,1}} $$
and
\begin{align*}
 (\mathfrak{p}_0^{\zeta_2})^\perp\cap\mathfrak{k}^\C_{\sigma_{\varrho,1}}&=(\g_{L_1-L_2}\oplus\g_{L_3-L_4} \oplus \g_{L_1-L_3}  \oplus \g_{L_2-L_4})\cap\mathfrak{k}^\C_{\sigma_{\varrho,1}},   \\ (\mathfrak{p}_1^{\zeta_2})^\perp\cap\mathfrak{m}^\C_{\sigma_{\varrho,1}}&= \g_{L_1-L_4}\cap\mathfrak{m}^\C_{\sigma_{\varrho,1}}= \g_{L_1-L_4}.
\end{align*}

 We have
$\sigma_{\varrho,1}(X_{1,2})=-X_{3,4}$ and $\sigma_{\varrho,1}(X_{1,3})= X_{2,4}$.    Hence we can write $C=C_0+C_1\lambda$, with
$$C_0=a(X_{1,2}-X_{3,4})+b(X_{1,3}+X_{2,4}),\,\,\,\,\,  C_1=cX_{1,4}$$
for some meromorphic functions $a,b,c$ on $S^2$. The harmonicity equations  impose that
$ab'-ba'=0$, which means that $b=\alpha a$ for some constant $\alpha\in \C.$
Hence given arbitrary  meromorphic functions $a,c$ on $S^2$ and a complex constant $\alpha$,
$$\Phi=\left(
                                          \begin{array}{cccc}
                                            1 & a & \alpha a & c\lambda \\
                                            0 & 1 & 0 & -\alpha a \\
                                            0 & 0 & 1 & a \\ 0 & 0 & 0 & 1  \\
                                          \end{array}
                                        \right)\cdot \gamma_{\zeta_1},$$
is an extended solution associated to some harmonic map in the class $(\zeta_2, \sigma_{\varrho,1})$. Reciprocally, any harmonic map in such class arises in this way.

\end{document}